\newcommand{\margnote}[1]{
\ifthenelse{\boolean{shownotes}}%
{\marginpar{\raggedright\tiny\texttt{#1}}}%
{}%
}
\newcommand{\comment}[1]{{\color{red} #1}}
\newcommand{\hole}[1]{
\ifthenelse{\boolean{shownotes}}%
{\begin{center} \fbox{ \rule {.25cm}{0cm} \rule[-.1cm]{0cm}{.4cm}
\parbox{.85\textwidth}{\begin{center} \texttt{#1}\end{center}} \rule
{.25cm}{0cm}}\end{center}} {} }
\title[Propagation of chaos for some 2D fractional Keller Segel equations]{Propagation of chaos for some 2 dimensional fractional Keller Segel equations in diffusion dominated and fair competition cases}
\author[Salem]{Samir Salem}
\address[Samir Salem]{\newline CEREMADE UMR 7534, \newline
    Universit\'e Paris Dauphine, Place du Mar\'echal de Tassigny, Cedex Paris, France}
\email{salem@ceremade.dauphine.fr}
\numberwithin{equation}{section}
\newtheorem{theorem}{Theorem}[section]
\newtheorem{lemma}{Lemma}[section]
\newtheorem{corollary}{Corollary}[section]
\newtheorem{proposition}{Proposition}[section]
\newtheorem{remark}{Remark}[section]
\newtheorem{definition}{Definition}[section]
\newcommand{\R}{\mathbb R}
\newcommand{\B}{\mathbb{B}}
\newcommand{\HH}{\mathcal{H}}
\newcommand{\II}{\mathcal{I}}
\newcommand{\MM}{\mathcal{M}}
\newcommand{\XX}{\mathcal{X}}
\newcommand{\PP}{\mathcal{P}}
\newcommand{\FF}{\mathcal{F}}
\newcommand{\CC}{\mathcal{C}}
\newcommand{\DD}{\mathcal{D}}
\newcommand{\ZZ}{\mathcal{Z}}
\newcommand{\mei}{\frac{1}{N}\sum_{i=1}^N}
\newcommand{\e}{\varepsilon}
\newcommand{\lal}{\langle}
\newcommand{\ral}{\rangle}
\newcommand{\lt}{\left}
\newcommand{\rt}{\right}
\newcommand{\pa}{\partial}
\newcommand{\mb}{\mathbf{1}}
\newcommand{\bq}{\begin{equation}}
\newcommand{\eq}{\end{equation}}
\newcommand{\LL}{\mathcal{L}}
\newcommand{\E}{\mathbb{E}}
\def\charf {\mbox{{\text 1}\kern-.30em {\text l}}}
\begin{document}
\allowdisplaybreaks

\date{\today}



\begin{abstract} 
		This work deals with the propagation of chaos without cut-off for some two dimensional fractional Keller Segel equations. The diffusion considered here is given by the fractional Laplacian operator $-(-\Delta)^{\frac{a}{2}}$ with $a \in (1,2)$ and the singularity of the interaction is of order $|x|^{1-\alpha}$ with $\alpha\in ]1,a]$. In the case $\alpha\in (1,a)$ we give a complete propagation of chaos result, proving the $\Gamma$-l.s.c property of the fractional Fisher information, already known for the classical Fisher information, using a result of \cite{HM}. In the fair competition case (see \cite{CarCal}) $a=\alpha$, we only prove a convergence/consistency result in a sub-critical mass regime, similarly as the result obtained for the classical Keller-Segel equation in \cite{FJ}.

\end{abstract}

\maketitle \centerline{\date}

\tableofcontents

%
%
%
%
\section{Introduction}\label{intro}

The parabolic-elliptic Keller Segel equation has received a large attention from the kinetic equation community lately. This model deals with the chemotaxis of cells or bacteria evolving in a  environment, which they are able to modify in order to communicate with each other. More precisely the evolution of the density of bacteria $\rho_t$ and the concentration of chemotaxis $c_t$ is given by the equation 
\bq
\label{eq:KS }
	\begin{aligned}
		&\pa_t \rho_t+\chi\nabla \cdot \lt( \rho_t\nabla c_t \rt)=\Delta\rho_t.,\cr
		&-\Delta c_t= \rho_t ,
	\end{aligned}
\eq
where $\chi>0$ is a sensitivity parameter encoding the intensity of the aggregation. We refer to \cite{DBP} for a proper biological and mathematical motivation. This model has been extensively studied, especially in dimension $2$ which is the best understood and which makes particular biological sense in the context of bacteria motion. Some blow up phenomena are known to arise if the initial mass is too large \cite[Corollary 2.2]{DBP}, and global well posedness holds when the mass is small enough \cite{MisEn}. However the question of propagation of chaos for this model remains open. \newline
Some bacteria are known for their "run and tumble" motion, therefore their trajectories are better described by L\'evy flights than Brownian motion (see for instance \cite{Cal}). This inclines to replace the classical diffusion in the evolution equation of the density of bacteria with a fractional diffusion. We define the fractional Laplacian on $\R^2$ of exponent $a/2\in (0,1)$ for smooth function $u\in \mathcal{C}_c^\infty(\R^2)$
\[
\begin{split}
-(-\Delta)^{a/2}u(x)&=c_{2,a}\mbox{v.p.}\int_{\R^2}\frac{u(x)-u(y) }{|x-y|^{2+a}}dy\\
&\quad =c_{2,a}\mbox{v.p.}\int_{\R^2}\frac{u(x)-u(y)-(x-y)\cdot \nabla u(x) }{|x-y|^{2+a}}dy,
\end{split}
\]
where $c_{2,a}$ is a normalization constant defined as
	\[
	c_{2,a}=-\frac{2^a \Gamma\lt(1+ \frac{a}{2} \rt)}{\pi \Gamma\lt( -\frac{a}{2} \rt)}=\frac{2^{a}a^2\Gamma^2\lt( \frac{a}{2} \rt)}{4\pi^2}\sin \lt(a \frac{\pi}{2}\rt),
	\]
	(see \cite{K} for equivalent definitions of the fractional Laplacian). Not only for the purpose of modeling, but also because of the recent popularity of fractional diffusion equation, the problem  
\bq
\label{eq:frNew1}
\begin{aligned}
	&\pa_t \rho_t+\chi\nabla \cdot \lt( \rho_t\nabla c_t \rt)+(-\Delta)^{\frac{a}{2}}\rho_t=0,\cr
	&-\Delta c_t= \rho_t ,
\end{aligned}
\eq
has been studied under various perspectives by different authors. In \cite{HL}, Huang and Liu obtained local in time existence for $L^2$ initial condition when $a\in (1,2)$ in dimension $2$. Escudero obtained global existence for a similar system in dimension $1$ in \cite{Esc}. In dimension $1$, equation \eqref{eq:frNew1} has also been studied by Clavez and Bournaveas in \cite{Cal} who prove global existence in case $a\in (0,1]$ for some initial condition in $L^p(\R)$ for some $p\geq \frac{1}{a}$ and in case $a\in (1,2]$. They also show blow-up in case $a\in (0,1)$ if the initial condition has small first order moment compared to initial mass. More recently, in $2$ dimensional settings and for $a\in(0,2)$, Biler et al obtained a blow-up condition for the solution \eqref{eq:frNew1} in \cite[Theorem 2.1]{Biot} for large $M^{\frac{2}{a}}$-Morrey norm of the initial condition.  \newline
In this paper we address the question of propagation of chaos, (we refer to \cite{Szn} and next section for details) for a similar equation as \eqref{eq:frNew1}, where we replace the Newtonian attraction force with a less singular interaction kernel. Let $\alpha\in (0,2)$, and on $\R^2$ define 
$$W_{\alpha}(x)=\frac{|x|^{2-\alpha}}{2-\alpha}, \quad \mbox{and}\quad K_{\alpha}(x)=-\nabla W_a(x)=-\frac{x}{|x|^{\alpha}},$$
(with the convention "$\frac{|x|^0}{0}=\ln |x|$"). For $(a,\alpha)\in (0,2)\times(0,2)$ and $N\geq 1$ let $(\ZZ_t^i)_{i=1,\cdots,N,t\geq 0}$ be $N$ independent $a$-stable L\'evy flights on $\R^2$ (more precisions will be given about $a$-stable process in the next session), $(X^1_0,\cdots,X^N_0)$ a random variable on $\R^{2N}$ independent of the $N$ L\'evy flights and consider the particle system evolving on the plane defined as
\bq
\label{eq:part_fr_KS}
X_t^i=X_0^i+\frac{\chi}{N}\int_0^t\sum_{j\neq i}K_{\alpha}(X_s^i-X_s^j)ds+\ZZ_t^i.
\eq
We expect that when the number of particle goes to infinity, and the family of initial condition is assumed to be $\rho_0$-chaotic (see \cite[Definition 2.1]{Szn}), the above particle system well approximates the following nonlinear PDE
\bq
\label{eq:Fr_KS}
\begin{split}
&\pa_t \rho_t+\chi\nabla \cdot \lt( \rho_t(K_{\alpha}*\rho_t)\rt)+\lt(-\Delta\rt)^{\frac{a}{2}}\rho_t=0\\
&\rho_{t=0}=\rho_0.
\end{split}
\eq
This nonlinear conservation equation is the equation satisfied by the time marginals of the process solution to the nonlinear SDE
\bq
\label{eq:NLSDE}
\XX_t=\XX_0+\chi\int_0^t\int_{ \R^{2} }K_{\alpha}(\XX_s-y)\rho_s(dy)\, ds+
\ZZ_t,\, \quad \rho_s=\LL(\XX_s), 
\eq
with $(\ZZ_t)_{t\geq 0}$ an $a$-stable L\'evy flight on $\R^2$ independent of $\XX_0$. \newline
In the rest of the paper, $\mbox{v.p.}$ stands for the Cauchy principal value of a singular integral, $\|\cdot\|_{L^p}$ is the $L^p$ norm on $\R^2$, $\|\cdot\|_{L^p \cap L^q}:=\|\cdot\|_{L^p}+\|\cdot\|_{L^q}$ and $|\cdot|_{H^{s}(\R^2) }$ is the fractional Sobolev semi-norm of exponent $s\in (0,1)$ defined as
\[
|u|_{H^{s}(\R^2) }:=c_{2,a}\int \int \frac{|u(x)-u(y)|^2}{|x-y|^{2+a}}dxdy.
\]
The Euclidian inner product on $\R^2$ will be denoted either $x\cdot y$ or $\left\langle x,y \right \rangle$, and sometimes the latest notation will be used for the inner product on other spaces when it will make sense. We will also use the notation $\lal x \ral=\sqrt{1+|x|^2}$, and for $\kappa\in \R$, $m_\kappa(x)=\lal x \ral^\kappa$  . The unit ball on $\R^2$ will be denoted $\B$, its complementary in the plan will be denoted $\B^c$, and the ball of radius $r>0$, $\B_r$. \\
For a functional $F$ on $\R^{2N}$, and $i=1,\cdots,N$ we note $\nabla_i F=(\pa_{2i-1} F,\pa_{2i}F) \in \R^2$, and the notation $X^x_k$ will stand for the integration variable $(x_1,\cdots,x_{k-1},x,x_{k+1},\cdots,x_N)$ in $\R^{2N}$ .\\
The notation $\PP(E)$ stands for the set of probability measures on $E$ and $\PP_{\mbox{sym}}(E^k)$ for the set of symmetric probability measures on $E^k$ and $\PP_{\mbox{sym}}(E^N)$ for the set of sequences of symmetric probabilities on $E^N$. $W_p$ is the Wasserstein metric of order $p\geq 1$. The notation, $\DD(0,T;\R^2)$ stands for the \textit{c\`adl\`ag} paths on $\R^2$. We define $\mathbf{e}_t:\gamma\in \DD(0,T;\R^2)\mapsto \gamma(t)\in \R^2$ the evolution map at time $t$, and for $\rho\in\PP\lt(\DD(0,T;\R^2)\rt)$ we implicitly associate the family of probability measures $(\rho_t\in \PP(\R^2))_{t\in[0,T]}$ defined as $\rho_t=\mathbf{e}_t\# \rho$. \\
Finally, the dependence of some generic constant $C$ on the parameters of the problem will always be expressed in its indexation. We set the notation $\Phi$ for the functional defined on the quarter plan $(0,\infty)\times (0,\infty)$ as $\Phi(x,y)=(x-y)\ln\lt(\frac{x}{y}\rt)$.
%
%

\section{Preliminaries and main results}

\subsection{Propagation of chaos} In this paper, we address the question of the propagation of chaos for the particle system \eqref{eq:part_fr_KS}. For the sake of completeness we recall some basic notions on this topic, and refer to \cite{Szn} for some further explanations. We begin with the
\begin{definition}[Definition 2.1 in \cite{Szn}]
	\label{def:chaos}
	Let $(u_N)_{N\geq 1}$ be a sequence of symmetric probabilities on $E^N$ ($u_N\in \PP_{\mbox{sym} }(E^N) $), with $E$ some polish space. We say that $u_N$ is $u$-chaotic, with $u\in \PP(E)$, if for any $k\geq 2$ and $\phi_1,\cdots,\phi_k\in C_b(E)$ it holds
	\[
	\lim_{N\rightarrow +\infty}\int_{E^N}\bigotimes_{j=1}^k\phi_j u_N=\prod_{j=1}^k\lt(\int_{E}\phi_j u\rt).
	\]
\end{definition}
Note that such a sequence $(u_N)_{N\geq 1}$ converges toward $\delta_u \in \PP(\PP(E))$ in the following sense. For any $k\geq 1$, let be $u_N^k$ the marginal of $u_N$ on $E^k$ and
\[
u^{\otimes k}=\int_{\PP(E)}\rho^{\otimes k}
\delta_u(d\rho),
\]
the projection on $\PP(E^k)$ of $\delta_u$. Then
\[
u_N^k\underset{N\rightarrow+\infty}{\overset{*}{\rightharpoonup}}u^{\otimes k} \quad \mbox{in} \quad \PP(E^k).
\] 
Then we need the important
\begin{proposition}[Proposition 2.2 of \cite{Szn}]
	\label{prop:chaos} Let be $(u_N)_{N\geq 1}$ be a sequence of symmetric probabilities on $E^N$ ($E$ a polish space), $(X^1,\cdots,X^N)_{N\geq 1}$ a sequence of random vector of law $u_N$, and $\mu_N=\mei\delta_{X^i}$ the emprical measure associated to this vector. Then
	\begin{itemize}
		\item[$(i)$] $u_N$ is $u$ chaotic if and only if $(\mu_N)_{N\geq 1}$ converges in law (weakly in $\PP(E)$) toward $u\in \PP(E)$.
		\item[$(ii)$] The sequence of random variables $(\mu_N)_{N\geq 1}$ is tight if and only if the sequence of law of $X^1$ under $u_N$ is tight. 
	\end{itemize}
\end{proposition}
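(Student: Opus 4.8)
The plan is to establish the two equivalences separately, relying only on Prokhorov's theorem, the symmetry of $u_N$, and elementary first/second moment estimates. For part $(i)$, in the direction "$u_N$ is $u$-chaotic $\Rightarrow$ $\mu_N$ converges in law to $u$", I would first reduce the claim to showing that $\mu_N\to u$ in probability in $\PP(E)$, since $\PP(E)$ is a Polish space and convergence in law towards the deterministic limit $\delta_u$ is equivalent to convergence in probability. As the weak topology of $\PP(E)$ is metrized by a distance controlled by a countable convergence-determining family $(\phi_m)_{m\geq 1}\subset C_b(E)$, it then suffices to prove $\langle\mu_N,\phi\rangle\to\langle u,\phi\rangle$ in probability for each fixed $\phi\in C_b(E)$. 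Writing $\langle\mu_N,\phi\rangle=\frac1N\sum_{i=1}^N\phi(X^i)$ and expanding the square, the symmetry of $u_N$ together with the chaoticity hypothesis applied with $k=2$ (once to $\phi_1=\phi_2=\phi$, once to $\phi_1=\phi$, $\phi_2=1$) yields $\E[\langle\mu_N,\phi\rangle]\to\langle u,\phi\rangle$ and $\E[\langle\mu_N,\phi\rangle^2]\to\langle u,\phi\rangle^2$, hence $L^2$ and thus in-probability convergence.

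For the converse direction of $(i)$, fix $k\geq 2$ and $\phi_1,\dots,\phi_k\in C_b(E)$. The map $\rho\in\PP(E)\mapsto\prod_{j=1}^k\langle\rho,\phi_j\rangle$ is bounded and weakly continuous, so the assumed convergence in law gives $\E\big[\prod_{j=1}^k\langle\mu_N,\phi_j\rangle\big]\to\prod_{j=1}^k\langle u,\phi_j\rangle$. On the other hand, expanding $\prod_j\langle\mu_N,\phi_j\rangle=N^{-k}\sum_{i_1,\dots,i_k}\prod_j\phi_j(X^{i_j})$ and separating the $\tfrac{N!}{(N-k)!}$ tuples of pairwise distinct indices (each contributing $\E[\prod_j\phi_j(X^j)]$ by symmetry) from the $O(N^{k-1})$ remaining ones (each bounded by $\prod_j\|\phi_j\|_\infty$), one obtains $\E\big[\prod_j\langle\mu_N,\phi_j\rangle\big]=\tfrac{N!}{(N-k)!\,N^k}\,\E\big[\prod_j\phi_j(X^j)\big]+O(1/N)$; since the prefactor tends to $1$, comparing the two displays yields the chaoticity identity.

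For part $(ii)$, the key observation is that the intensity (mean) measure of the random probability measure $\mu_N$ equals the law $u_N^1$ of $X^1$, because $\E[\langle\mu_N,\phi\rangle]=\E[\phi(X^1)]=\langle u_N^1,\phi\rangle$ for every $\phi\in C_b(E)$ by symmetry. The statement then follows from the general fact that, for random elements of $\PP(E)$, tightness in $\PP(\PP(E))$ is equivalent to tightness of the associated intensity measures in $\PP(E)$, for which I would give a short direct proof: if the laws of $\mu_N$ are tight, a compact $\mathcal K\subset\PP(E)$ with $\IP(\mu_N\notin\mathcal K)<\e$ is itself tight by Prokhorov, so choosing a compact $K\subset E$ with $\sup_{\rho\in\mathcal K}\rho(K^c)<\e$ gives $\E[\mu_N(K^c)]\leq\e+\IP(\mu_N\notin\mathcal K)<2\e$, i.e. tightness of $(u_N^1)$; conversely, given compacts $K_m\subset E$ with $\sup_N u_N^1(K_m^c)<\e\,2^{-2m}$, Markov's inequality and a union bound show that the tight (hence relatively compact) set $\{\rho\in\PP(E):\rho(K_m^c)\leq 2^{-m}\ \forall m\}$ carries $\mu_N$ with probability at least $1-\e$, uniformly in $N$.

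I expect the only genuinely delicate points to be organisational rather than deep: in $(i)$, the passage from convergence of finitely many linear functionals to convergence in the weak topology of $\PP(E)$ (which is where the Polish structure of $E$ enters), and in $(ii)$, the tightness-transfer between $\PP(E)$ and $\PP(\PP(E))$ through intensity measures; everything else reduces to the symmetry of $u_N$, the boundedness of the test functions, and elementary moment computations.
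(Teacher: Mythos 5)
The paper does not prove this proposition---it is quoted verbatim from Sznitman \cite{Szn} and used as a black box---so there is no in-paper argument to compare against. Your proof is correct and is essentially the classical argument of the cited reference: the $L^2$/second-moment computation with the $k=2$ marginal for the forward direction of $(i)$, the off-diagonal expansion of $\E\bigl[\prod_j\langle\mu_N,\phi_j\rangle\bigr]$ for the converse, and the identification of $u_N^1$ as the intensity measure of $\mu_N$ combined with Prokhorov's theorem (in both directions, using that $E$ is Polish) for $(ii)$.
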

Our aim is therefore to prove that the dynamic \eqref{eq:part_fr_KS} propagates chaos i.e. that if one starts this dynamic from some initial condition which law is $\rho_0$-chaotic, the law of the solution at time $t>0$ to \eqref{eq:part_fr_KS} is $\rho_t$-chaotic, with $\rho_t$ the solution at time $t>0$ to \eqref{eq:Fr_KS}. Or equivalently, due to the above Proposition, to prove that
\[
\mu^N_0=\mei\delta_{X_0^i}\underset{N\rightarrow+\infty}{\overset{*,(\LL)}{\rightharpoonup}}\rho_0
\, \Rightarrow \,
\mu^N_t=\mei\delta_{X_i^t}\underset{N\rightarrow+\infty}{\overset{*,(\LL)}{\rightharpoonup}}\rho_t.
\]
Implicitly, such a statement requires a good knowledge of the well posedness of the limit problem \eqref{eq:Fr_KS}. When this knowledge is not available (typically when the singularity of the kernel $K_\alpha$ is too strong), one can not expect such a strong result as it does not make sens to talk about "the" solution at time $t>0$, $\rho_t$. However one can look at a weaker result of the type, if
\[
\mu^N_0=\mei\delta_{X_0^i}\underset{N\rightarrow+\infty}{\overset{*,(\LL)}{\rightharpoonup}}\rho_0,
\] 
then there exists a subsequence of the $\lt((\mu_t^N)_{t\in[0,T]}\rt)_{N\geq 1}$ converging in law toward some (possibly random) $\lt(\mu_t\rt)_{t\in[0,T]}\in\PP\lt(\DD(0,T;\R^2)\rt)$, which solves to \eqref{eq:Fr_KS} and such that $\mu_0=\rho_0$. We talk in this case, we talk of convergence/consistency rather than propagation of chaos. 
\subsection{$a$-stable processes with index $a\in(1,2)$}

Let $M$ be a Poisson random measure on $\R^+\times \R^2$ of intensity $Leb\times \lambda$ where $\lambda$ is a $\sigma$-finite measure on $\R^2$ satisfying $\int_{ \R^2 }(|x|^2\wedge 1)\lambda (dx)<\infty $ (see for instance \cite[Definition 2.3, Chapter V]{Cin}). Denote $\bar{M}$ its compensated measure, i.e. $\bar{M}(ds,dx)=M(ds,dx)-ds\lambda(dx)$, and denote $(Z_t)_{t\geq 0}$ the following L\'evy process
\bq
\label{eq:aLev}
Z_t=\int_{[0,t]\times\B}x \overline{M}(ds,dx)+\int_{[0,t]\times\B^c}x M(ds,dx).
\eq
The first stochastic integral in the r.h.s. converges in the sense of the Cauchy principal value i.e.
\[
\int_{[0,t]\times\B}x \overline{M}(ds,dx) =\lim_{\e \rightarrow 0}\lt( \int_{[0,t]\times\B\setminus \B_\e}x M(ds,dx)-t \int_{  \B\setminus \B_\e} x \lambda dx\rt) \quad \mbox{a.s.}
\]
Due to Ito's rule \cite[Theorem 4.4.7, p 226]{Ebe} we have for a test function $\phi$ smooth enough
$$\begin{aligned}
\phi(Z_t)=&\phi(Z_s)+\int_{[s,t]\times\B}x\cdot\nabla \phi(Z_{u_-})\overline{M}(du,dx)+\int_{[s,t]\times\B^c}x\cdot\nabla \phi(Z_{u_-})M(du,dx)\\
&\quad +\int_{[s,t]\times \R^2}\lt(\phi(Z_{u_-}+x)-\phi(Z_{u_-})-x\cdot\nabla \phi(Z_{u_-}) \rt)M(du,dx).
\end{aligned}$$
In the particular case 
$$\lambda(dx)=c_{2,a}\frac{dx}{|x|^{2+a}},$$
we can rewrite
\[
Z_t=\int_{[0,t]\times \R^2} x \bar{M}(ds,dx),
\]
since $\int_{\B^c} \lambda(dx)=0$, and then
\bq
\label{eq:martlev}
\begin{aligned}
	\phi(Z_t)=&\phi(Z_s) +\int_{[s,t]\times \R^2}\lt(\phi(Z_{u_-}+x)-\phi(Z_{u_-})) \rt)\bar{M}(du,dx)\\
	&\quad  +c_{2,a}\int_s^t\mbox{v.p.}\int_{\R^2}\frac{\phi(Z_{u}+x)-\phi(Z_{u})-x\cdot\nabla \phi(Z_{u})}{|x|^{2+a}}dx\,du.
\end{aligned}
\eq
This particular choice of intensity makes of $(Z_t)_{t\geq 0}$ defined in \eqref{eq:aLev} an $a$-stable L\'evy process, i.e. $(Z_t)_{t\geq 0}$ has the same law as $(u^{-1/a}Z_{ut})_{t\geq 0}$ for any $u>0$. Necessarily, such a process can only exists for $a\in[0,2]$ \cite[Exercice 2.34, Chapter VI]{Cin}, the case $a=0$ corresponding to the null process, and the case $a=2$, to the standard Brownian motion. It is well known, but we also see from \eqref{eq:martlev}, that the infinitesimal generator of the $a$-stable L\'evy process is the fractional Laplacian of exponent $a/2$. It deduces also from \eqref{eq:martlev} and classical properties of Poisson random measures that for any smooth function $\phi$ the process $(\mathcal{M}_t)_{t\geq 0}$ defined as
\[
\begin{aligned}
\mathcal{M}_t=\phi(Z_t)-\phi(0) -c_{2,a}\int_0^t\mbox{v.p.}\int_{\R^2}\frac{\phi(Z_{u}+x)-\phi(Z_{u})-z\cdot\nabla \phi(Z_{u})}{|z|^{2+a}}dx\,du,
\end{aligned}
\]
is a martingale, and the $a$-stable L\'evy flight $(Z_t)_{t\geq 0}$ is the only process such that $(\mathcal{M}_t)_{t\geq 0}$ defined as above is a martingale.

\subsection{Main result}
We now give some comments on propagation of chaos results for similar systems as the one studied here, already existing in the literature after which we will give the main result of this paper. We emphasize that there are only a few results of propagation of chaos for particle system with singular interaction and additive diffusion, beside the ones we recall here. They rely essentially on the fact that the diffusion is non degenerate, and in particular the strategy would not apply for second order system with a diffusion only in velocity.  \newline
We introduce three different cases. When $a>\alpha$ we say we are in \textit{Diffusion Dominated} case, when $a=\alpha$, in \textit{Fair competition} case and $a<\alpha$ in \textit{Aggregation Dominated} case. This terminology has been introduced by Carrillo et al (see for instance \cite[Definition 2.6]{CarCal}) and is based on the homogeneity analysis of the free energy for which the system they study is a gradient flow (in Wasserstein metric). However, the presence of fractional diffusion here makes difficult to write equation \eqref{eq:Fr_KS} in a gradient flow form (in usual Wasserstein metric see for instance \cite{Erb}), and in some sense we abuse their terminology. Nevertheless note that the classical $2$ dimensional Keller-Segel (which is $a=\alpha=2$) falls in Fair competition case, as they define it, as the sub-critical Keller Segel equation (i.e. $2=a>\alpha>1$) studied in \cite{GQ} falls in the Diffusion Dominated case. So that the extension we give here of their nomenclature is not completely senseless.\newline

We begin with the \textit{Aggregation Dominated} case, as it is the less understood of all. To the best of the author's knowledge, there is no result without cut-off, except for the case $a=0$ that is without any diffusion at all. In \cite{CarChoiHau}, the authors consider the case $a=0$ and $\alpha\in (0,2)$, as the absence of diffusion makes possible here to control the minimal inter-particle distance, thus the singularity of the interaction, but under some very restrictive assumptions on the initial distribution of the particles. As for cut-off result, Huang and Liu treated the case $\alpha=2$ and $a\in (0,1)$ with logarithmic cut-off of order $(\ln N)^{-1/2}$ in \cite{HL}. More recently Garcia and Pickl treated the classical Keller Segel case $a=\alpha=2$ with polynomial cut-off of order $N^{-\alpha}$ with $\alpha\in (0,1/2)$, but the coupled techniques they used could easily be extended to the full Aggregation Dominated case.\newline
For the \textit{Fair competition} case, the only existing result to the best of the author's knowledge, is the one of Fournier and Jourdain \cite[Theorem 6]{FJ} for the classical Keller Segel equation. Based on a strategy already used by Osada \cite{Osa}, the authors establish the convergence of the empirical measure associated to the particle system \eqref{eq:part_fr_KS} in case $a=\alpha=2$ to a solution to the corresponding limiting equation \eqref{eq:KS } The strategy relies on the simple following Ito's computation
\bq
\label{eq:FouJou}
\begin{split}
d\lt| X^1_t-X_t^2  \rt|^\e&= -\chi\e\lt| X^1_t-X_t^2  \rt|^{\e-2}\left\langle  X^1_t-X_t^2, \frac{1}{N} \sum_{k>2} \lt( K_2(X^1_t-X_t^k) -  K_2(X_t^2-X_t^k) \rt)\right\rangle dt \\
&\quad - \frac{2\chi \e}{N} \lt| X^1_t-X_t^2  \rt|^{\e-2}dt+ +2\e^2  \lt| X^1_t-X_t^2  \rt|^{\e-2}dt\\
&\quad+\e\lt| X^1_t-X_t^2  \rt|^{\e-2}\left\langle  X^1_t-X_t^2,dB_t^1-dB_t^2\right\rangle. 
\end{split}
\eq
As we are in the Fair competition case, the good term produced by the diffusion is of the same order as the bad terms produced by the drift, and choosing rightfully the exponent $\e\in (0,1)$ and the sensitivity $\chi>0$ enables to control 
$
\int_0^t\E\lt[|X_s^1-X_s^2|^{\e-2}\rt]\,ds,
$ 
and therefore the singularity of the interaction 
$
\int_0^t\E\lt[K_2(X_s^1-X_s^2)\rt]\,ds=\int_0^t\E\lt[|X_s^1-X_s^2|^{-1}\rt]\,ds.
$
However they are only able to conclude to a convergence/consistency result, as the limiting solution lies the class of weak solution to \eqref{eq:KS } which satisfies 
\[
\int_0^t \int_{\R^2\times \R^2}|x-y|^{-1}\rho_s(dx)\rho_s(dy) <\infty,
\]
for which uniqueness is not known. 
\newline
Finally the \textit{Diffusion Dominated} case is the easier as far as propagation of chaos is concerned. Godinho and Quininao treated the case $a=2,\alpha\in (1,2)$ in \cite{GQ}. They followed the strategy introduced by Fournier et al in \cite{MSN} for the propagation of chaos for the $2$D Navier-Stokes equation in vortex formulation. This strategy relies on a control of the entropy dissipation along the Liouville equation associated to the particle system \eqref{eq:part_fr_KS} (that is the equation solved by the law of this particle system). In the case $a=2$, this dissipation is the so called Fisher information, which enables to control the singularity of $K_{\alpha}$ for $\alpha \in (1,2)$ (see \cite[Lemma 3.3]{MSN}). Not only this information enables to deduce the convergence of the particle system, but passing to the limit, to deduce that the limiting solution lies in the class of solution which Fisher information is locally integrable in time, hence locally $L^1$ in time and $L^p$ in space for some $p\in \frac{2}{1-\alpha}$ (see \cite[Theorem 1.5]{GQ}). Uniqueness for equation \eqref{eq:Fr_KS} in this class is known in the case $\alpha<a=2$ (see \cite[Theorem 5.2]{GQ}), and so the authors manage to conclude to a full propagation of chaos result. Let also be mentioned that when $a=2$ and $\alpha \in(0,1)$, the interaction $K_{\alpha}$ is $(1-\alpha)$-Holder, and the propagation of chaos falls within a recent and more general result by Holding \cite[Theorem 2.1]{Hol}. This result provides convergence rate in Wasserstein metric, and is very different from the other results quoted in this section. Nevertheless, this result is obtained by taking advantage of the diffusion, and could not be stated in the deterministic case. \newline
To summarize, the Newtonian interaction is critically controlled by a classical diffusion, and a less singular than Newtonian interaction is perfectly controlled without any diffusion at all or with a classical diffusion. As these two cases correspond to the two extremal exponent for stable L\'evy process ($a=0$ and $a=2$), a natural question to investigate is, which type of singularity can be controlled by a fractional diffusion. It is the object of the main result of this paper given in the 

\begin{theorem}
	\label{thm:main}
	Let be $T>0$. 
	\begin{itemize}
		\item[Diffusion Dominated] Let be $2>a>\alpha>1$ and $\lt((X_t^1,\cdots,X_t^N)_{t\in[0,T]}\rt)_{N\geq 1}$ be a sequence of solutions to equation \eqref{eq:part_fr_KS} with initial condition of law $\lt(\rho_0^{\otimes N}\rt)_{N\geq 1}$ with $\rho_0\in L\log L(\R^2)\cap \PP_\kappa(\R^2)$ for some $\kappa\in(1,a)$. Then the sequence $\lt(\lt(\mei \delta_{X_t^i}\rt)_{t\in[0,T]}\rt)_{N\geq 1}$ goes in law $\PP\lt(\mathcal{D}(0,T;\R^2)\rt)$ to the unique solution $(\rho_t)_{t\in[0,T]}$  to equation \eqref{eq:Fr_KS} in $L^1 (0,T;L^{p}(\R^2))$ for any $p\in \lt(1,\frac{1}{1-a/2}\rt)$ starting from $\rho_0$. 
			\item[Fair Competition] Let be $2>a=\alpha>1$ and $\lt((X_t^1,\cdots,X_t^N)_{t\in[0,T]}\rt)_{N\geq 1}$ be a sequence of solutions to equation \eqref{eq:part_fr_KS} with initial condition of laws $\lt(F_0^N\rt)_{N\geq 1}\in \PP_{\mbox{sym}}(\R^{2N})$ being $\rho_0$-chaotic in the sense of Definition \ref{def:chaos} and satisfying \[
			\sup_{N\geq 1}\int_{\R^{2N}}\lal x_1\ral^{\kappa}F_0^N(dx_1,\cdots,dx_N)<\infty,
			\]  
			 for some $\kappa\in(1,a)$.
		 There exists $a^*\in(1,2)$ and $\chi_a:(a^*,2)\mapsto \R^*_+$ with $\lim_{a\rightarrow 2^-} \chi_a=1$ such that if $a\in (a^*,2)$ and $\chi\in (0,\chi_a)$, then there exists a subsequence of $\lt(\lt(\mei \delta_{X_t^i}\rt)_{t\in[0,T]}\rt)_{N\geq 1}$ which goes in law in $\PP(\DD(0,T;\R^2))$ to $(\rho_t)_{t \in [0,T]}$, a solution to equation \eqref{eq:Fr_KS} starting from $\rho_0\in \PP_{\kappa}(\R^2)$ which satisfies for any $\e\in (0,1)$
		\[
		\int_0^T\int_{\R^2\times \R^2} |x-y|^{\e-a}\rho_s(dx)\rho_s(dy)\,ds<\infty.
		\]
	
	\end{itemize}

\end{theorem}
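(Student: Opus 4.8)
The proof proceeds, as the statement signals, by the ``martingale problem plus a priori estimate'' scheme, but with genuinely different a priori estimates in the two cases. Write $F^N_t$ for the law of $(X^1_t,\dots,X^N_t)$, a solution of the (linear) Liouville equation $\partial_t F^N_t=-\sum_i\nabla_i\cdot(b^N_iF^N_t)-\sum_i(-\Delta_{x_i})^{a/2}F^N_t$ with $b^N_i=\frac{\chi}{N}\sum_{j\neq i}K_\alpha(x_i-x_j)$; note $(b^N_i)_i=\nabla\bigl(-\frac{\chi}{N}\sum_{k<l}W_\alpha(x_k-x_l)\bigr)$, so the drift is a gradient. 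The skeleton common to both cases is: (a) an $N$-uniform bound controlling $\int_0^T\E[|X^1_s-X^2_s|^{-\beta}]\,ds$ for a suitable $\beta$; (b) tightness of $\mathrm{Law}(\mu^N)$ in $\PP(\PP(\DD(0,T;\R^2)))$, the $a$-stable part being tight in the Skorokhod $J_1$ topology and (a) making $\int_0^T|b^N_1(X_s)|\,ds$ uniformly integrable, so that by Proposition~\ref{prop:chaos}(ii) it is enough to have tightness of the law of $X^1$; (c) identification of any limit point $Q\in\PP(\PP(\DD(0,T;\R^2)))$ by passing to the limit in the nonlinear martingale problem attached to \eqref{eq:NLSDE}, where (a) — via $|x-y|^{1-\alpha}\le\delta\,|x-y|^{-\alpha}$ on $\{|x-y|\le\delta\}$ — supplies the uniform integrability needed to pass to the limit in the singular functional $\mu\mapsto\int\!\!\int_{x\neq y}K_\alpha(x-y)\cdot\nabla\phi(x)\,\mu_s(dx)\,\mu_s(dy)$; (d) when a uniqueness theorem for \eqref{eq:Fr_KS} is available, the conclusion $Q=\delta_\rho$. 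A routine preliminary layer underlies all of this: estimates are first run with a mollified kernel $K_\alpha^\delta$, uniformly in $\delta$, then $\delta\to0$; and a $\kappa$-moment (with $\kappa<a$, the best possible since the $a$-stable flight has moments only of order $<a$) is propagated, which both bounds $H(F^N_t)=\int F^N_t\log F^N_t$ from below, $\frac1NH(F^N_t)\ge-C$ by the Gibbs inequality against $\sum_i\langle x_i\rangle^\kappa$, and carries $\PP_\kappa$-control to the limit.

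\textbf{Diffusion Dominated ($\alpha<a$).} The estimate of (a) is the free energy / entropy estimate of \cite{GQ,MSN}. Differentiating $H(F^N_t)$ along the Liouville equation gives $\frac{d}{dt}H(F^N_t)+D_a(F^N_t)=\frac{\chi(2-\alpha)}{N}\sum_{k\neq l}\int F^N_t\,|x_k-x_l|^{-\alpha}$, where $D_a(F^N_t)\ge0$ is the sum over coordinates of the fractional entropy dissipations and, since $\Phi(u,v)\ge4(\sqrt u-\sqrt v)^2$, dominates a multiple of the total fractional Fisher information $I_a(F^N_t)$. The right-hand side is not a priori controlled, but being a two-particle quantity it is bounded, by Sobolev and Hardy--Littlewood--Sobolev, by $\eta\,I_a(F^N_t)+C_\eta N$ for any $\eta>0$; this is where $\alpha<a$ is used, the exponent $\tfrac2{2-a}$ from the critical embedding $\dot H^{a/2}(\R^2)\hookrightarrow L^{4/(2-a)}$ being strictly above the Hölder threshold $\tfrac2{2-\alpha}$ for $|x-y|^{-\alpha}$ in $\R^2$. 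Absorbing, integrating, and using $\tfrac1NH(F^N_0)=H(\rho_0)<\infty$ (hypothesis $\rho_0\in L\log L$) with the lower bound on $H(F^N_T)$, one gets, uniformly in $N$,
\[
\tfrac1N\int_0^T I_a(F^N_s)\,ds\le C,\qquad \int_0^T\E\big[|X^1_s-X^2_s|^{-\alpha}\big]\,ds\le C .
\]
Steps (b)--(c) then apply and the second bound makes $Q$-a.e.\ $\mu$ a weak solution of \eqref{eq:Fr_KS} from $\rho_0$. For (d) the first bound must be passed to the limit, which is the new point: writing $(\sqrt u-\sqrt v)^2=u+v-2\sqrt{uv}$ shows $I_a$ convex, it is lower semicontinuous, and — this is the fact imported from \cite{HM}, exactly as for the classical Fisher information — it is $\Gamma$-lower semicontinuous along the empirical-measure map, so $\E_Q\big[\int_0^T I_a(\mu_s)\,ds\big]\le\liminf_N\tfrac1N\int_0^T I_a(F^N_s)\,ds\le C$. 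Hence $Q$-a.s.\ $\int_0^TI_a(\mu_s)\,ds<\infty$, so $\sqrt{\mu_s}\in\dot H^{a/2}(\R^2)$ for a.e.\ $s$ and, by the above embedding and interpolation with the $L^1$ mass in the space variable, $\mu\in L^1(0,T;L^p(\R^2))$ for every $p<\tfrac1{1-a/2}$. In that class \eqref{eq:Fr_KS} has a unique solution from $\rho_0$ — a stability estimate in a negative-order norm, the $L^p$ bound absorbing the singular drift, as in \cite[Theorem~5.2]{GQ} but now with $\alpha<a<2$ — so $Q=\delta_\rho$, which by Proposition~\ref{prop:chaos} is the asserted propagation of chaos.

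\textbf{Fair Competition ($\alpha=a$).} Now the entropy estimate is exactly critical: the uncontrolled term $\tfrac{\chi(2-a)}{N}\sum_{k\neq l}\int F^N|x_k-x_l|^{-a}$ can no longer be absorbed into $D_a$, since the Sobolev exponent $\tfrac2{2-a}$ coincides with the Hölder threshold for $|x-y|^{-a}$. I would instead follow Fournier--Jourdain \cite{FJ} (after Osada \cite{Osa}) and get (a) from an Itô computation on $\phi_\e(X^1_t-X^2_t)$, $\phi_\e(y)=|y|^\e$ with $\e\in(0,1)$, truncated near $0$ and $\infty$. As $Z^1-Z^2$ equals in law $2^{1/a}$ times an $a$-stable flight, its jump part produces $-2(-\Delta)^{a/2}|y|^\e=c(\e,a)\,|y|^{\e-a}$ with $c(\e,a)>0$ — the regularising term, of homogeneity $|y|^{\e-a}$, with $c(\e,2)=2\e^2$ as in \eqref{eq:FouJou}; the self-interaction of particles $1,2$ gives $-\tfrac{C\chi\e}{N}|y|^{\e-a}$; and the three-particle cross term is, by Cauchy--Schwarz and exchangeability, bounded by $C\chi\e\,\E\big[|X^1_s-X^2_s|^{\e-1}|X^1_s-X^3_s|^{1-a}\big]$, which a Young inequality with conjugate exponents $\tfrac{a-\e}{1-\e}$ and $\tfrac{a-\e}{a-1}$ turns again into $C\chi\e\,\E\big[|X^1_s-X^2_s|^{\e-a}\big]$ (the far field absorbed by the propagated $\kappa$-moment). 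Taking expectations and integrating yields $\bigl(c(\e,a)-C\chi\e-\tfrac{C\chi}{N}\bigr)\int_0^T\E\big[|X^1_s-X^2_s|^{\e-a}\big]\,ds\le C$, which closes precisely when $c(\e,a)$ exceeds the cross-term constant — forcing $\chi$ small and $a$ close to $2$, whence the threshold $a^*\in(1,2)$ and $\chi_a\to1$ as $a\to2$. Running (b)--(c) with $\beta=a-\e$ produces a subsequence of $\mathrm{Law}(\mu^N)$ converging to a (possibly random) solution of \eqref{eq:Fr_KS} from $\rho_0$ with $\int_0^T\!\!\int\!\!\int|x-y|^{\e-a}\rho_s\rho_s\,ds<\infty$ for every $\e\in(0,1)$; uniqueness in this class being open, this is only a convergence/consistency statement.

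\textbf{Main obstacles.} In the Diffusion Dominated case the crux is the $\Gamma$-lower semicontinuity of $I_a$: since $\mu^N_s$ is atomic, $I_a(\mu^N_s)=+\infty$ and the uniform bound really lives at the level of $F^N_s$, so transferring it to the limiting \emph{random} measure demands the $\Gamma$-type statement of \cite{HM}, not plain lower semicontinuity; establishing it, together with the companion convexity and Sobolev facts for the fractional Fisher information, is the heart of the argument (and the kernel regularisation legitimising the entropy manipulations, the propagation of $\kappa$-moments, and tightness in $J_1$ all have to be done uniformly in $N$). In the Fair Competition case the difficulties are: rigorously justifying the jump Itô formula for the non-$C^2$, unbounded $|y|^\e$ acting on a pure-jump process; computing $(-\Delta)^{a/2}|y|^\e$ with the sharp constant $c(\e,a)$ and tracking its behaviour as $\e\to0$ and $a\to2$, which is exactly what fixes $a^*$ and $\chi_a$; and controlling the three-particle cross term, which is what makes the smallness of $\chi$ unavoidable.
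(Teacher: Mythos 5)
Your overall architecture matches the paper's: entropy/fractional-Fisher-information dissipation plus the $\Gamma$-lower-semicontinuity of $I_a$ in the Diffusion Dominated case, and an It\^o estimate on an $\e$-power of the two-particle difference in the Fair Competition case, both feeding a tightness-plus-martingale-problem identification. The DD half is essentially the paper's proof, with two small deviations worth noting: the paper does \emph{not} use Hardy--Littlewood--Sobolev to bound $\int|x_1-x_2|^{-\alpha}F^2$ (it explicitly remarks that HLS only applies in the tensorized case, which $F^2$ is not); instead it changes variables by $(x_1,x_2)\mapsto(x_1-x_2,x_2)$, uses the invariance of $\II_a^2$ under this map, and applies the fractional Gagliardo--Nirenberg--Sobolev inequality to the first marginal, getting a sublinear power of $\II_a^N(F^N)$ that is then absorbed. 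And for uniqueness the paper does not run a negative-order-norm PDE estimate but a pathwise coupling: two solutions of the nonlinear SDE driven by the same L\'evy process, a $W_q$-type stability via Lemma \ref{lem:log_lip}, and Gronwall. These are routes to the same conclusions and do not affect correctness of your outline.

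The genuine gap is in your Fair Competition argument. You propose to apply It\^o to $\phi_\e(y)=|y|^\e$ ``truncated near $0$ and $\infty$'' and to extract the coercive term $-2(-\Delta)^{a/2}|y|^\e=c(\e,a)|y|^{\e-a}$ with the \emph{sharp} constant, letting that constant fix $a^*$ and $\chi_a$. This is precisely the computation the paper's Appendix A carries out and then declares formal: $|y|^\e$ is not $\CC^2$ (not even $\CC^a$) at the origin, so the jump It\^o formula does not apply, and any truncation or mollification near $0$ destroys the exact identity $(-\Delta)^{a/2}|y|^\e=c|y|^{\e-a}$ exactly where the singular coercive term is needed. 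The paper's actual resolution is to work with the smooth regularization $\phi_\eta(y)=(|y|^2+\eta^2)^{\e/2}$, for which no closed formula for the fractional Laplacian is available, and to prove by hand (Lemma \ref{lem:frlap}, a Taylor expansion split into near-field and far-field contributions) a lower bound of the form $\frac{\e^2\pi}{2(2-a)}(|y|^2+\eta^2)^{\frac{\e-4}{2}}|y|^{4-a}-2\pi\e C_{\e,a}|y|^{\e-a}$, uniform in $\eta$. It is this non-sharp bound, not the exact constant $c(\e,a)$, that produces the admissible region \eqref{eq:cond} and hence $a^*$ and $\chi_a$; the sharp-constant computation would give the better threshold $\chi_a=2^{a-2}/(a-1)$, $a^*=1$, which the paper states it cannot justify. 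You flag the It\^o regularity issue as an obstacle, but your plan does not contain the ingredient (a quantitative lower bound on $(-\Delta)^{a/2}\phi_\eta$ stable as $\eta\to0$) that actually closes it, and the constants you would read off from the formal identity are not the ones the theorem as stated can deliver.
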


We emphasize that this result is, to the best of the author's knowledge, the first propagation of chaos result with singular kernel and anomalous diffusion.\newline
Let us briefly sketch the proof of this theorem. For the Diffusion Dominated case, we follow the strategy of \cite{MSN}. In Proposition \ref{prop:tiDD} we show the tightness of the law of the particle system \eqref{eq:part_fr_KS}, which implies due to Proposition \ref{prop:chaos} the tightness of the law of the sequence of empirical measure $\lt((\mei\delta_{X_t^i})_{t\in [0,T]}\rt)_{N\geq 1}$. Therefore we can find subsequence converging in law toward a limit which turns out to be a solution to equation \eqref{eq:Fr_KS} which has a locally integrable in time fractional Fisher information. In Proposition \ref{prop:stab} we show that such a solution is unique, which concludes the desired propagation of chaos result.\newline
For the Fair Competition case, we follow the strategy of \cite{FJ}. In Proposition \ref{prop:tiFC} we also show  the tightness of the law of the sequence of empirical measure $\lt((\mei\delta_{X_t^i})_{t\in [0,T]}\rt)_{N\geq 1}$. This implies that there exists a subsequence converging in law toward some solution to equation \eqref{eq:Fr_KS}, in a class for which uniqueness is not known. Therefore we only conclude to a convergence/consistency result.

\begin{figure}[H]
	\centering
	\includegraphics[scale=0.2]{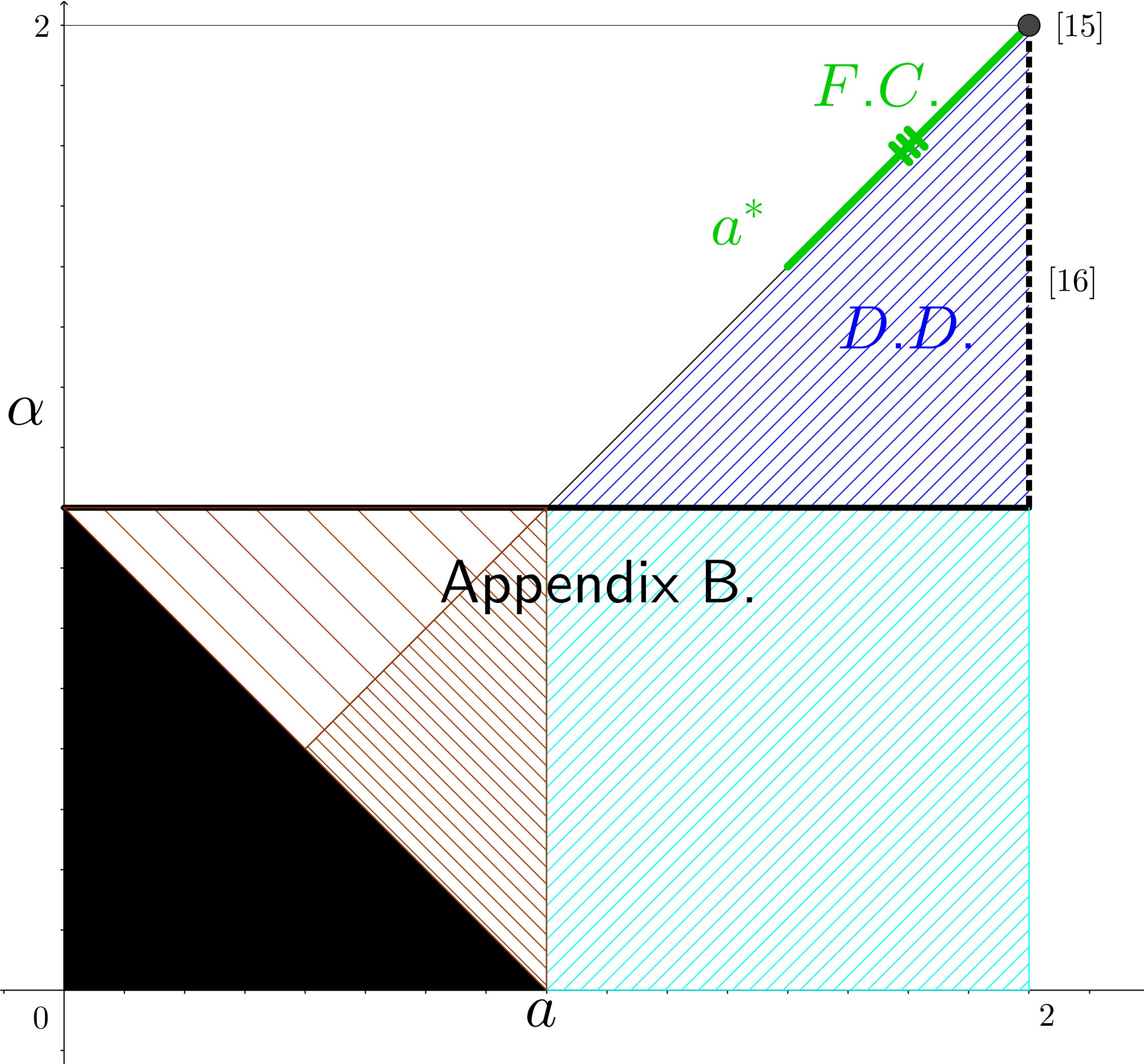}
	\caption{Existing results of propagation of chaos, without cut-off for equation \eqref{eq:Fr_KS}, together with the main result of this paper}
	\label{fig1}
\end{figure}

%
%
%
%
\section{Tightness estimate}

The key point of the proof of Theorem \ref{thm:main}, is the tightness of the law of the particle system \eqref{eq:part_fr_KS}. Such a result falls after getting an estimation of the expectation of some singular function of the distance between the first and second particle (by ex changeability), but this estimate is obtained with very different techniques in the Diffusion Dominated and the Fair Competition case.

\subsection{Useful estimates}
In this section we provide some basic estimates which will be used later in the paper. We begin with the
\begin{lemma}
	\label{lem:use1}
	The following useful properties hold
	\begin{itemize}
		\item[$(i)$] For all $a,b\geq 0$ and $\alpha,\beta\geq 0$. Then
		\[
		\lt( \alpha a-\beta b\rt)\lt( \ln a- \ln b \rt)\geq (a-b)(\alpha-\beta),
		\]
		with equality if and only if $a=b$.
		\item[$(ii)$] the functional $\Phi$ defined on $(0,\infty)\times(0,\infty)$ as $\Phi(x,y)=(x-y)(\ln x - \ln y)$ is convex and affine along the lines passing through the origin.
		\item[$(iii)$] for any $\kappa\geq 1$ and $x,y\in \R^2$ it holds
		\[
		\lt(\lal x\ral^{\kappa-2}x-\lal y\ral^{\kappa-2}y \rt)\cdot (x-y) \geq 0. 
		\]
	\end{itemize}
\end{lemma}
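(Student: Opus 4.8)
For $(i)$ I would restrict to $a,b>0$ (the only regime in which $\ln a-\ln b$ is meaningful; the cases with $a$ or $b$ zero are either trivial or vacuous) and simply expand both sides, collecting the coefficients of $\alpha$ and of $\beta$ separately; this yields the identity
\[
(\alpha a-\beta b)(\ln a-\ln b)-(a-b)(\alpha-\beta)=\alpha\lt(a\ln\tfrac{a}{b}-a+b\rt)+\beta\lt(b\ln\tfrac{b}{a}-b+a\rt).
\]
Writing $t=a/b>0$ and $h(s)=s\ln s-s+1$, one checks $a\ln\frac{a}{b}-a+b=b\,h(t)$ and $b\ln\frac{b}{a}-b+a=a\,h(1/t)$, so the left-hand side equals $\alpha b\,h(t)+\beta a\,h(1/t)$. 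Since $h(1)=0$ and $h'(s)=\ln s$, one has $h\geq 0$ on $(0,\infty)$ with equality only at $s=1$, which closes the inequality; and the sum vanishes only if $\alpha h(t)=0$ and $\beta h(1/t)=0$, forcing $a=b$ as soon as $(\alpha,\beta)\neq(0,0)$.

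For $(ii)$ I would exploit the perspective-function structure: with $g(s)=(s-1)\ln s$ on $(0,\infty)$ one has $\Phi(x,y)=y\,g(x/y)$, and $g''(s)=\frac1s+\frac1{s^2}>0$ shows $g$ is convex, hence its perspective $(x,y)\mapsto y\,g(x/y)$ is jointly convex on $(0,\infty)^2$. For a hands-on alternative, a direct computation gives $\nabla^2\Phi(x,y)=(x+y)\,ww^{T}$ with $w=(1/x,-1/y)^{T}$, a rank-one positive semidefinite matrix on the open quadrant. The affineness along lines through the origin is just the $1$-homogeneity $\Phi(\lambda x,\lambda y)=\lambda\Phi(x,y)$ for $\lambda>0$: the restriction of $\Phi$ to any ray $\{\lambda(x_0,y_0):\lambda>0\}$ is the linear map $\lambda\mapsto\lambda\Phi(x_0,y_0)$.

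For $(iii)$ the key observation is that $\lal x\ral^{\kappa-2}x=\tfrac1\kappa\nabla m_\kappa(x)$ with $m_\kappa(x)=\lal x\ral^{\kappa}=(1+|x|^2)^{\kappa/2}$, so the claimed inequality is exactly the monotonicity of $\nabla m_\kappa$, i.e. the convexity of $m_\kappa$ on $\R^2$. I would establish this from
\[
\nabla^2 m_\kappa(x)=\kappa\lal x\ral^{\kappa-4}\lt(\lal x\ral^2\,I+(\kappa-2)\,x\otimes x\rt),
\]
which is plainly positive semidefinite for $\kappa\geq 2$; for $1\leq\kappa<2$ the factor $\kappa-2$ is negative, so $(\kappa-2)(x\cdot v)^2\geq(\kappa-2)|x|^2|v|^2=(\kappa-2)(\lal x\ral^2-1)|v|^2$, whence $v^{T}\nabla^2 m_\kappa(x)\,v\geq\kappa\lal x\ral^{\kappa-4}\lt((\kappa-1)\lal x\ral^2+(2-\kappa)\rt)|v|^2\geq0$. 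Then $\lt(\nabla m_\kappa(x)-\nabla m_\kappa(y)\rt)\cdot(x-y)=\int_0^1(x-y)^{T}\nabla^2 m_\kappa(y+s(x-y))(x-y)\,ds\geq0$, and dividing by $\kappa>0$ gives $(iii)$.

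None of the three items is a genuine obstacle; the only spots needing attention are the equality clause in $(i)$, which tacitly assumes $a,b>0$ and $(\alpha,\beta)\neq(0,0)$, and the sign bookkeeping in the Hessian bound for $(iii)$ when $1\leq\kappa<2$, where one must be careful that multiplying $(x\cdot v)^2\leq|x|^2|v|^2$ by the negative number $\kappa-2$ reverses the inequality.
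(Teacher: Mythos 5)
Your proof is correct, and for part $(i)$ it takes a genuinely different route from the paper. The paper argues by a case analysis on the signs of $a-b$ and $\alpha-\beta$, using two different regroupings of $(a\alpha-b\beta)(\ln a-\ln b)$ together with the concavity bounds $\frac{a-b}{a}\leq \ln a-\ln b\leq \frac{a-b}{b}$; you instead collect the coefficients of $\alpha$ and $\beta$ once and for all, reducing everything to the single elementary fact $h(s)=s\ln s-s+1\geq 0$ with equality only at $s=1$. Your identity is cleaner, avoids the symmetry/case bookkeeping, and makes the equality discussion transparent — including the (correct) observation that the ``only if'' clause tacitly requires $a,b>0$ and $(\alpha,\beta)\neq(0,0)$, a caveat the paper glosses over. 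For $(ii)$ your rank-one Hessian $(x+y)\,ww^{T}$ is exactly the paper's matrix, and the perspective-function remark $\Phi(x,y)=y\,g(x/y)$ is a nice conceptual alternative that simultaneously explains the $1$-homogeneity. For $(iii)$ the paper merely asserts the convexity of $m_\kappa$ for $\kappa\geq 1$; your explicit Hessian bound, with the careful sign reversal when multiplying $(x\cdot v)^2\leq |x|^2|v|^2$ by $\kappa-2<0$, actually supplies the missing verification, so your write-up is if anything more complete than the original.
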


\begin{proof}
	$(i)$  By symmetry of the role of $a,b,\alpha,\beta$ we only treat the cases $a\geq b, \alpha\geq \beta$ and $a\geq b, \alpha\leq\beta$. In the first case we easily obtain, since $x\mapsto \ln x$ is increasing
	\[
	\begin{split}
	(a\alpha-b\beta)(\ln a- \ln b&)= (\alpha-\beta)a(\ln a-\ln b)+\beta (a-b)(\ln a -\ln b)\\
	&\quad \geq (\alpha-\beta )(a-b),	
	\end{split}
	\]
	because since $x\mapsto \ln x$ is concave, it holds 
	\[
	(\ln a- \ln b)\geq\frac{(a-b)}{a},
	\]
	with equality if and only if $a=b$.	On the other case we get
	\[
	\begin{split}
	(a\alpha-b\beta)(\ln a- \ln b&)= \alpha(a-b)(\ln a-\ln b)+b(\ln a -\ln b) (\alpha-\beta)\\
	&\quad \geq (\alpha-\beta )(a-b),	
	\end{split}
	\]
	because it holds
	\[
	(\ln a- \ln b)\leq\frac{(a-b)}{b},
	\]
	which concludes the proof.\newline
	$(ii)$ direct computations yields
	\[
	\nabla^2\Phi(x,y)=(x+y)\begin{pmatrix}
	\frac{1}{x^2} & -\frac{1}{xy} \\ 
	-\frac{1}{xy} & \frac{1}{y^2}
	\end{pmatrix},
	\]
	which is nonnegative. Moreover it is clear that for any $k>0$
	\[
	\Phi(kx,ky)=(kx-ky)(\ln(kx)-\ln(ky))=k\Phi(x,y).
	\]
	$(iii)$ The functional $m_\kappa:x\in \R^2 \mapsto \lal x \ral^\kappa$ is convex for $\kappa\geq 1$. And then for any $x,y\in \R^2$
	\[
	\lt( \nabla m_\kappa(x)-\nabla m_\kappa(y) \rt)\cdot (x-y)\geq 0.
	\]

\end{proof}

\subsection{Fractional Laplacian and fractional Fisher information}
In this section we provide the key arguments from which the tightness of the law of the particle system will fall. We start with the tool needed in the Fair Competition case. It consists roughly in giving a bound from below of the Ito's correction of the process $(|Z_t|^\e)_{t\geq 0}$ with $\e \in (0,1)$ and $(Z_t)_{t\geq 0}$ some $2d$ and $a$-stable L\'evy process. Precisely we have the

\begin{lemma}
	\label{lem:frlap} Let $\eta>0$, $\e\in(0,1)$ and $\phi_{\eta}$ defined as
	\[
	\phi_{\eta}:x\in \R^2\mapsto \lt(|x|^2+\eta^2\rt)^{\frac{\e}{2}}=\eta^\e \lal\frac{x}{\eta} \ral^\e.
	\]
	Then for any $a\in (1,2)$ there exists a constant $C_{\e,a}$ such that for any $x\in \R^2$ it holds
	
	\[
	\begin{aligned}
	\int_{\R^2}\frac{\phi_{\eta}(x+z)-\phi_{\eta}(x)-z\cdot \nabla \phi_{\eta}(x)}{|z|^{2+a}}dz&\geq \frac{\e^2\pi}{2(2-a)}(|x|^2+\eta^2)^{\frac{\e-4}{2}} |x|^{4-a}-\e2\pi C_{\e,a}|x|^{\e-a},
	\end{aligned}
	\]
	with 
	\[
	C_{\e,a}=\lt(\frac{2-\e}{\sqrt{4-\e}(3-a)\e}+\frac{1 }{a\e}\rt).
	\]
	
\end{lemma}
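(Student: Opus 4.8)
The plan is to analyze the integral defining the fractional-Laplacian correction of $\phi_\eta$ at a fixed point $x$ by splitting the domain of integration into a region near the origin (where the second-order Taylor behaviour dominates and gives a positive contribution) and a region far from the origin (where one controls the remainder by the growth of $\phi_\eta$). The key structural fact is that $\phi_\eta$ is smooth and its Hessian can be computed explicitly: writing $\phi_\eta(x)=(|x|^2+\eta^2)^{\e/2}$, one has $\nabla\phi_\eta(x)=\e(|x|^2+\eta^2)^{\e/2-1}x$ and
\[
\nabla^2\phi_\eta(x)=\e(|x|^2+\eta^2)^{\frac{\e}{2}-1}\,\mathrm{Id}+\e(\e-2)(|x|^2+\eta^2)^{\frac{\e}{2}-2}\,x\otimes x.
\]
Since $\e\in(0,1)$, the second term is negative but is a rank-one perturbation; contracting against a unit vector $\omega$ gives $\omega^T\nabla^2\phi_\eta(x)\omega=\e(|x|^2+\eta^2)^{\e/2-2}\bigl((|x|^2+\eta^2)+(\e-2)(x\cdot\omega)^2\bigr)$, and averaging $(x\cdot\omega)^2$ over the sphere $S^1$ yields $|x|^2/2$. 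This is exactly where the factor $\frac{\e^2\pi}{2(2-a)}(|x|^2+\eta^2)^{\frac{\e-4}{2}}|x|^{4-a}$ in the statement comes from.

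\textbf{Step 1 (near-field, $|z|\le|x|$).} On this region I would use the exact integral remainder form of Taylor's theorem,
\[
\phi_\eta(x+z)-\phi_\eta(x)-z\cdot\nabla\phi_\eta(x)=\int_0^1(1-t)\,z^T\nabla^2\phi_\eta(x+tz)\,z\,dt,
\]
and bound $\nabla^2\phi_\eta(x+tz)$ from below. The delicate point is that the negative rank-one piece must be controlled; since $|z|\le|x|$ one has $|x+tz|\le 2|x|$ and, crucially, $|x|^2+\eta^2\le C(|x+tz|^2+\eta^2)$ for $|z|$ not too large, so one can absorb the loss. Integrating the resulting lower bound $\gtrsim \e\,(|x|^2+\eta^2)^{\e/2-2}\bigl(\text{something}\bigr)|z|^2$ against $|z|^{-2-a}$ over $\{|z|\le|x|\}$ produces, after passing to polar coordinates and doing the $\int_0^{|x|}r^{1-a}\,dr=\frac{|x|^{2-a}}{2-a}$ integral and the angular average, precisely the leading term $\frac{\e^2\pi}{2(2-a)}(|x|^2+\eta^2)^{\frac{\e-4}{2}}|x|^{4-a}$, together with lower-order errors of size $\e\pi|x|^{\e-a}$ coming from the rank-one correction and from the slack in the comparison of $|x|^2+\eta^2$ with $|x+tz|^2+\eta^2$.

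\textbf{Step 2 (far-field, $|z|\ge|x|$).} Here one drops the $+$ part of the integrand where it is positive and only needs an upper bound on $\phi_\eta(x)+z\cdot\nabla\phi_\eta(x)-\phi_\eta(x+z)$ from above (to make the inequality go the right way we need a \emph{lower} bound on $\phi_\eta(x+z)-\phi_\eta(x)-z\cdot\nabla\phi_\eta(x)$, i.e. an upper bound on its negative part). Using $0\le\phi_\eta(x+z)$, $|\nabla\phi_\eta(x)|\le\e|x|^{\e-1}\le\e\,\phi_\eta(x)^{(\e-1)/\e}$ and the sublinear growth $\phi_\eta(x)\le(|x|+\eta)^\e$, one controls $\int_{|z|\ge|x|}\frac{|\phi_\eta(x+z)-\phi_\eta(x)|+|z||\nabla\phi_\eta(x)|}{|z|^{2+a}}\,dz$. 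The worst contribution is $\int_{|z|\ge|x|}\frac{\phi_\eta(x+z)}{|z|^{2+a}}\,dz$, which by the triangle inequality $\phi_\eta(x+z)\le\phi_\eta(x)+|z|^\e$ (subadditivity of $t\mapsto t^\e$ on $\e\le1$, applied with $|x+z|^\e\le|x|^\e+|z|^\e$) splits into $\phi_\eta(x)\int_{|z|\ge|x|}|z|^{-2-a}\,dz=\phi_\eta(x)\cdot\frac{2\pi}{a}|x|^{-a}$, giving the $\frac{1}{a\e}$-type term, plus $\int_{|z|\ge|x|}|z|^{\e-2-a}\,dz=\frac{2\pi}{a-\e}|x|^{\e-a}$ (requires $\e<a$, true since $\e<1<a$), and the gradient term contributes $\e|x|^{\e-1}\int_{|z|\ge|x|}|z|^{-1-a}\,dz=\frac{2\pi\e}{a}|x|^{\e-a}$; after collecting the constants and noting $\phi_\eta(x)|x|^{-a}\le(|x|+\eta)^\e|x|^{-a}$ is still of the homogeneity $|x|^{\e-a}$ only when $\eta\lesssim|x|$ — in the regime $\eta\gtrsim|x|$ the near-field term itself is comparable to $\eta^\e|x|^{-a}$ and dominates, so one checks the two regimes separately — one obtains the claimed $-\e2\pi C_{\e,a}|x|^{\e-a}$ with $C_{\e,a}=\frac{2-\e}{\sqrt{4-\e}(3-a)\e}+\frac{1}{a\e}$.

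\textbf{The main obstacle} I anticipate is bookkeeping the constant $C_{\e,a}$ exactly — in particular tracing the somewhat unusual factors $\sqrt{4-\e}$ and $(3-a)$, which suggests the author keeps the $\eta$-dependence sharp (not merely replacing $(|x|^2+\eta^2)$ by $|x|^2$) throughout Step 1, presumably by writing the rank-one error term as a genuine integral $\int_0^{|x|}\!\int_{S^1}$ with the $(x\cdot\omega)^2$ weight kept and estimating $\int_{S^1}(x\cdot\omega)^2\,d\omega$ and the radial integral $\int_0^{|x|}r^{1-a}\,dr$ jointly against $(|x|^2+\eta^2)^{(\e-2)/2}$ rather than $|x|^{\e-2}$; the factor $\sqrt{4-\e}$ plausibly comes from optimizing a Cauchy–Schwarz or from the bound $(x\cdot\omega)^2\le\frac{|x|^2}{4-\e}\cdot(\text{correction})$ when combining with the positive main term, and $(3-a)$ from a radial integral $\int r^{2-a}\,dr$ arising after one extra power of $|z|$ is spent controlling the Hessian variation. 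Everything else is routine: smoothness of $\phi_\eta$, polar coordinates, and the elementary inequalities $|x+z|^\e\le|x|^\e+|z|^\e$ and $0\le\phi_\eta$.
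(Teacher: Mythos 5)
Your overall architecture (split at $|z|=|x|$, Taylor expansion, polar coordinates for the leading term) is the same as the paper's, and your identification of where the main positive term comes from is correct. But as written there are two genuine gaps. First, in the near field: your plan is to lower-bound $z^T\nabla^2\phi_\eta(x+tz)z$ pointwise and then average in angle. This cannot work pointwise, because $\omega^T\nabla^2\phi_\eta(y)\,\omega=\e(|y|^2+\eta^2)^{\e/2-2}\big((|y|^2+\eta^2)+(\e-2)(y\cdot\omega)^2\big)$ can be as negative as $\e(\e-1)(|y|^2+\eta^2)^{\e/2-1}$ for $\e<1$; positivity is purely an angular-averaging effect, and at $t>0$ the relevant direction is $x+tz$, which varies with $z$, so the clean $\cos^2$ average is lost. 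Moreover your comparison $|x|^2+\eta^2\le C(|x+tz|^2+\eta^2)$ fails near $z\approx -x$, $t\approx 1$. The paper avoids both problems by evaluating the second derivative of $t\mapsto\phi_\eta(x+tz)$ exactly at $t=0$ (where the angular integral of $\cos^2(\theta_0-\theta)$ gives the factor $\tfrac12$ and hence the $\frac{\e^2\pi}{2(2-a)}$), and pushing all $t$-dependence into a \emph{third}-order remainder, controlled via $|x+tz|\ge(1-t)|x|$ and $\sup_{|X|\le1}|(\e-4)X^3+3X|=2/\sqrt{4-\e}$ — that is where $\sqrt{4-\e}$ comes from (not Cauchy--Schwarz), and $(3-a)$ comes from $\int_{|z|\le|x|}|z|^{1-a}dz$.

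Second, and more seriously, your far-field estimate does not yield the stated bound. Bounding $\int_{|z|\ge|x|}\phi_\eta(x)|z|^{-2-a}dz=\frac{2\pi}{a}\phi_\eta(x)|x|^{-a}$ produces an error of order $\eta^\e|x|^{-a}$, which for $|x|\ll\eta$ is much larger than the claimed $|x|^{\e-a}$ and cannot be absorbed by the near-field term (which there behaves like $\eta^{\e-4}|x|^{4-a}\to 0$); "checking the two regimes separately" does not repair this. The paper's $I_3$ uses a symmetry trick: on $\{|z|\ge|x|\}$ one has $\phi_\eta(x+z)=\phi_\eta(z+x)$ and $(|x|^2+\eta^2)^{\e/2}\le(|z|^2+\eta^2)^{\e/2}$, so the integrand is bounded below by the Taylor expansion of $t\mapsto\phi_\eta(z+tx)$ \emph{around $z$ with increment $x$}; the linear term integrates to zero by antisymmetry in $z$, and the second-order remainder is $O(|x|^{\e-a})$ uniformly in $\eta$ since $|(\phi^{x,z}_\eta)''(t)|\le\e|x|^\e(1-t)^{\e-2}$. (An elementary alternative that would also work is $\phi_\eta(x+z)-\phi_\eta(x)\ge\eta^\e-\phi_\eta(x)\ge-|x|^\e$, but that is not what you wrote.) Also note that the term $z\cdot\nabla\phi_\eta(x)$ you estimate in absolute value on $\{|z|\ge|x|\}$ actually integrates to zero by antisymmetry, and your constant $\frac{2\pi\e}{a}$ for it should be $\frac{2\pi\e}{a-1}$; keeping it only adds error not present in $C_{\e,a}$.
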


\begin{proof}
	Let be $x,z\in \R^2\times \R^2$, $\eta>0$ and define $\phi_{\eta}^{z,x}$ as
	\[
	\phi_{\eta}^{z,x}(t)=\lt( |x+tz|^2+\eta^2\rt)^{\frac{\e}{2}}.
	\]
	Then straightforward computations yield
	\[
	\begin{split}
	&(\phi_{\eta}^{z,x})'(t)=\e\left\langle z,x+tz \right\rangle\lt( |x+tz|^2+\eta^2\rt)^{\frac{\e-2}{2}},\\
	&(\phi_{\eta}^{z,x})''(t)=\e\lt( |x+tz|^2+\eta^2\rt)^{\frac{\e-2}{2}}|z|^2\lt( (\e-2)X_{x,t,z}^2+1    \rt),\\
	&(\phi_{\eta}^{z,x})^{(3)} (t)=\e(\e-2)\lt( |x+tz|^2+\eta^2\rt)^{\frac{\e-3}{2}}|z|^3\lt((\e-4)X_{x,t,z}^3+3X_{x,t,z}	\rt),
	\end{split}
	\] 
	with
	$X_{x,t,z}=\left\langle \frac{x+tz}{|x+tz|},\frac{z}{|z|} \right\rangle\lt(\frac{|x+tz|^2}{|x+tz|^2+\eta^2}\rt)^{\frac{1}{2}}$. Then Taylor's formula yields
	\[
	\begin{aligned}
	\int_{\R^2}\frac{\phi_{\eta}^{z,x}(1)-\phi_{\eta}^{z,x}(0)-(\phi_{\eta}^{z,x})'(0)}{|z|^{2+a}}dz&= \int_{ |z|\leq |x|} \frac{\phi_{\eta}^{z,x}(1)-\phi_{\eta}^{z,x}(0)-(\phi_{\eta}^{z,x})'(0)}{|z|^{2+a}}\\
	&\quad +   \int_{|z|\geq |x|}\frac{\phi_{\eta}^{z,x}(1)-\phi_{\eta}^{z,x}(0)}{|z|^{2+a}}dz\\
	&\quad =\int_{|z|\leq |x|}\frac{(\phi_{\eta}^{z,x})''(0)}{2|z|^{2+a}}dz\\
	&\quad +\int_{|z|\leq |x|} \lt(\int_0^1 \frac{(\phi_{\eta}^{z,x})^{(3)}(t)}{2!}(1-t)^2dt \rt) \frac{dz}{|z|^{2+a}}\\
	&\quad +  \int_{|z|\geq |x|}\frac{\phi_{\eta}^{z,x}(1)-\phi_{\eta}^{z,x}(0)}{|z|^{2+a}}dz\\
	&\quad :=I_1+I_2+I_3.
	\end{aligned}
	\]
	$\diamond$ Estimate of $I_1$\newline
	It is direct to obtain rewriting $x=|x|(\cos(\theta_0),\sin(\theta_0))$
	\[
	\begin{aligned}
	I_1&=\frac{\e}{2}\lt( |x|^2+\eta^2\rt)^{\frac{\e-2}{2}}\int_{|z|\leq |x|}|z|^{-a}\lt( (\e-2)\left\langle \frac{z}{|z|},\frac{x}{|x|} \right\rangle^2\frac{|x|^2}{|x|^2+\eta^2}+1    \rt)dz\\
	&\quad=
	\frac{\e}{2} \lt(|x|^2+\eta^2\rt)^{\frac{\e-2}{2}}\int_0^{|x|}\int_0^{2\pi}r^{-a}\lt(1+(\e-2)(\cos(\theta_0)\cos(\theta)+\sin(\theta_0)\sin(\theta))^2\frac{|x|^2}{|x|^2+\eta^2} \rt)rdrd\theta\\
	&\quad = \frac{\e}{2} \lt(|x|^2+\eta^2\rt)^{\frac{\e-2}{2}}\lt(\int_0^{|x|}r^{	1-a}dr\rt)\lt(\int_0^{2\pi}\lt(1+(\e-2)\cos^2(\theta_0-\theta)\frac{|x|^2}{|x|^2+\eta^2}\rt)d\theta\rt)\\
	&\quad = \frac{\e}{2} \lt(|x|^2+\eta^2\rt)^{\frac{\e-2}{2}}\frac{|x|^{2-a}}{2-a}2\pi\lt( 1+\frac{\e-2}{2} \frac{|x|^2}{|x|^2+\eta^2}\rt)\\
	&\quad =2\pi\lt( \frac{\e^2}{4(2-a)}\lt(|x|^2+\eta^2\rt)^{\frac{\e-4}{2}}|x|^{4-a}+\frac{\e}{2(2-a)}\lt(|x|^2+\eta^2\rt)^{\frac{\e-2}{2}}|x|^{2-a}\frac{\eta^2}{|x|^2+\eta^2}\rt)\\
	&\quad \geq \frac{\pi\e^2}{2(2-a)}\lt(|x|^2+\eta^2\rt)^{\frac{\e-4}{2}}|x|^{4-a}.
	\end{aligned}
	\]

	$\diamond$ Estimate of $I_2$ \newline
	In the case $|z|\leq |x|$ we easily get
	\[
	|x+tz|\geq |x|-t|z|\geq (1-t)|x|,
	\]
	and since 
	\[
	\lt| (\e-4)X^3+3X \rt|\leq \frac{2}{\sqrt{4-\e}},
	\]
	for $X\in (-1,1)$ we deduce
	\[
	|(\phi_{\eta}^{z,x})^{(3)} (t)|\leq \e(2-\e)|x|^{\e-3}|z|^3(1-t)^{\e-3} \frac{2}{\sqrt{4-\e}}, 
	\]
	therefore
	\[
	\begin{aligned}
	I_2\geq& -\frac{\e(2-\e)}{2} \frac{2}{\sqrt{4-\e}}\lt(\int_0^1(1-t)^{\e-1} dt \rt) |x|^{\e-3} \int_{|z|\leq |x|}\frac{|z|^3}{|z|^{2+a}}dz\\
	&\quad \geq  - \frac{2\pi\e(2-\e)}{\e(3-a)\sqrt{4-\e}}|x|^{\e-a}.
	\end{aligned}
	\]

	$\diamond$ Estimate of $I_3$ \newline
	Note that if $|z|\geq |x|$ then 
	\[
	-(|z|^2+\eta^2)^{\frac{\e}{2}}=-\phi^{x,z}_{\eta}(0)\leq -\phi^{z,x}_{\eta}(0)=-(|x|^2+\eta^2)^{\frac{\e}{2}},
	\]
	then since $\phi_{\eta}^{z,x}(1)=\phi_{\eta}^{x,z}(1)$, we have
	\[
	\begin{aligned}
	I_3=& \int_{|z|\geq |x|}\frac{\phi_{\eta}^{z,x}(1)-\phi_{\eta}^{z,x}(0)}{|z|^{2+a}}dz\\
	&\quad \geq  \int_{|z|\geq |x|}\frac{\phi_{\eta}^{x,z}(1)-\phi_{\eta}^{x,z}(0)}{|z|^{2+a}}dz\\
	&\quad =\int_{|z|\geq |x|}\frac{(\phi_{\eta}^{x,z})'(0)+\int_0^1(\phi_{\eta}^{x,z})^{(2)}(t)(1-t)dt}{|z|^{2+a}}dz\\
	&\quad =\int_{|z|\geq |x|}\frac{\e (|z|^2+\eta^2)^{\frac{\e-2}{2}}\left\langle x,z \right\rangle }{|z|^{2+a}}dz + \int_{|z|\geq |x|} \int_0^1(\phi_{\eta}^{x,z})^{(2)}(t)(1-t) |z|^{-(2+a)}dtdz\\
	&\quad = \int_{|z|\geq |x|} \int_0^1(\phi_{\eta}^{x,z})^{(2)}(t)(1-t) |z|^{-(2+a)}dtdz,
	\end{aligned}
	\]
	but recall that
	\[
	(\phi_{\eta}^{x,z})''(t)=\e\lt( |z+tx|^2+\eta^2\rt)^{\frac{\e-2}{2}}|x|^2\lt( (\e-2)\left\langle \frac{x}{|x|},\frac{z+tx}{|z+tx|} \right\rangle^2\frac{|z+tx|^2}{|z+tx|^2+\eta^2}+1    \rt),
	\]
	and since $|z|\geq |x|$ we easily get
	\[
	|z+tx|\geq |z|-t|x|\geq (1-t)|x|.
	\]
	Moreover since for any $X\in(0,1)$ 
	\[
	\lt|(\e-2)X^2+1\rt|\leq 1,
	\]
	it follows
	\[
	|(\phi_{\eta}^{z,x})''(t)|\leq \e|x|^\e(1-t)^{\e-2}.
	\]
	This yields
	\[
	I_3\geq - \frac{2\pi\e}{\e a   }|x|^{\e-a},
	\]
	and the result holds with the desired constant.
\end{proof}

\begin{remark}
	 Also recall the bound for any $\e\in (0,a)$ 
	\bq
	\label{eq:momfralap}
	-(-\Delta)^{a/2}(m_\e)(x)\leq C_{a,\e} m_{\e-a}(x),
	\eq
	see for instance \cite[Proposition 2.2, (18)]{LL}. 
\end{remark}

Next we need some tools for the Diffusion Dominated case $\alpha<a$. As this case is close to the one studied in \cite{GQ} which relies on properties of the classical Fisher information, we need to extend those properties to the anomalous diffusion case. Such a fractional Fisher information has not been very studied in the literature. The main results in this domain, to the best of the author's knowledge, have been obtained by Toscani \cite{Tos}, also note the contribution of \cite{Gran} where the author also consider such a fractional Fisher information for probability measures on the $1d$ torus or the real line. The work by Erbar in \cite{Erb} should also be quoted, where the author establishes some new metric on the probability measure space, with respect to which the Boltzmann's entropy is a gradient flow functional for the fractional heat equation. In this purpose the author introduces a (relative) fractional Fisher information.  \newline
By definition, considering $N$ independent $2$-dimensional Brownian motion and one $2N$-dimensional Brownian motion is the same. Therefore establishing the Liouville equation associated to a particle system with independent Brownian diffusions, falls by using an Ito's formula in $\R^{2N}$ (see for instance \cite[Proof of Proposition 5.1, Step 1]{MSN}). However, for any $a\in (0,2)$, $N$ independent $a$-stable L\'evy process on $\R^2$ is L\'evy processes on $\R^{2N}$ which is not a $a$-stable. \newline
Let be $\mu_{1},\cdots,\mu_N$ $N$ independent  Poisson random measures on $\R^+\times \R^2$ with intensity $ds\times c_{2,a}\frac{dx}{|x|^{2+a}}$, and denote $\tilde{\ZZ}_t^{N}$ the $\R^{2N}$-valued process defined by
\[
\tilde{\ZZ}_t^{N}=\lt( \int_{[0,t]\times\R^2} x\bar{\mu}_1(ds,dx) ,\cdots,\int_{[0,t]\times\R^2} x\bar{\mu}_N(ds,dx) \rt ).
\]
It is classical (see \cite[Chapter VII, 3)]{Cin}), since the $(\mu_i)_{i=1,\cdots,N}$ are independent, to obtain for any $r^N=(r_1,\cdots,r_N)\in\R^{2N}$
\[
\E\lt[e^{ir^N\cdot \tilde{\ZZ}_t^{N} } \rt]=e^{t \sum_{k=1}^N \comment{c_{a,2}} |r_k|^a} =e^{t\sum_{k=1}^{N}\psi(r_k)},
\]
where $\psi(r)=\comment{c_{a,2}}|r|^a$ is the characteristic exponent of the $2d$, $a$-stable L\'evy process. Let now be $\mathcal{M}^{N}$ be a Poisson random measure on $\R^+\times \R^2\times\{1\cdots,N\}$ with intensity $Nds\times c_{2,a} |x|^{-(2+a)}dx\times \mathcal{U}\lt(\{1,\cdots,N\}\rt)$ 
and denote
\[
\bar{\ZZ}_t^{N}= \int_{[0,t]\times \R^2\times\{1\cdots,N\}}\lt( \delta_{l=1}x,\cdots,\delta_{l=N}x \rt)\bar{\mathcal{M}^{N}}(ds,dx,dl).
\]
Hence it holds due to classical properties of Poisson random measure (see \cite[Theorem 2.9, p 252]{Cin} for instance)
\[
\begin{split}
\E\lt[e^{ir^N\cdot \bar{\ZZ}_t^{N} } \rt]& =   e^{-\int_0^t c_{2,a}\mbox{v.p.}\int_{ \R^{2} } \frac{1}{N}\sum_{l=1}^N\lt(  1-e^{i\sum_{k=1}^N \delta_{l,k}x\cdot r_k} \rt)  N \frac{dx}{|x|^{2+a}}ds} \\
&\quad =  e^{t \sum_{k=1}^N c_{2,a} \mbox{v.p.}\int_{ \R^{2} }\lt(  e^{ix\cdot r_k}-1 \rt) \frac{dx}{|x|^{2+a}} dx }=e^{t\sum_{k=1}^N\psi(r_k)}.
\end{split}
\]
Then it deduces that the law $F_t^N=\LL(\tilde{\ZZ}_t^N)=\LL(\bar{\ZZ}_t^N)$ solves the following linear integro-differential PDE
\bq
\pa_t F_t^N(X^N)+\int_{\R^{2N}} \frac{F_t^N(X^N+z\mathbf{a_k})-F_t^N(X^N))}{|z|^{2+a}}dz=0,
\eq
with $z\mathbf{a}_k=\lt(\underbrace{0,\cdots 0}_{2(k-1)} ,
z_1,z_2,\underbrace{0\cdots,0}_{2(N-k)}\rt) \in \R^{2N}$ and $z=(z_1,z_2)$. Next we look at the dissipation of entropy along this equation. For a probability measure $F^N\in \PP(\R^{2N})$ introduce the \textit{normalized Boltzmann's entropy} 
\[
\HH^N(F^N)=\frac{1}{N}\int_{ \R^{2N} } F^N(x)\ln F^N(x)dx,
\]
then it holds
\[
\begin{split}
\frac{d}{dt}\HH^N(F^N_t)&=-\frac{1}{N}\int_{ \R^{2N} } (1+\ln F_t^N(X^N) )\lt(\sum_{k=1}^N\mbox{p.v.}\int_{\R^2} \frac{ F_t^N(X^N+z\mathbf{a}_k)-F_t^N(X^N)  }{|z|^{2+a}}dz\rt)dX^N\\
&\quad = -\frac{1}{N} \sum_{k=1}^N\int_{ \R^{2N} }  \mbox{p.v.}\int_{\R^2} \frac{\lt( F_t^N(X^N+z\mathbf{a}_k)-F_t^N(X^N) \rt)\ln F_t^N(X^N) }{|z|^{2+a}}dzdX^N\\
&\quad = \frac{1}{N} \sum_{k=1}^N\int_{ \R^{2N} }  \mbox{p.v.}\int_{\R^2} \frac{\lt( F_t^N(X^N)-F_t^N(X^N+z\mathbf{a}_k) \rt)\ln F_t^N(X^N+z\mathbf{a}_k) }{|z|^{2+a}}dzdX^N\\
&\quad =-\frac{1}{2}\frac{1}{N} \sum_{k=1}^N\int_{ \R^{2N} }  \mbox{p.v.}\int_{\R^2} \frac{\lt( F_t^N(X^N+z\mathbf{a}_k)-F_t^N(X^N) \rt)\lt(\ln F_t^N(X^N+z\mathbf{a}_k)-\ln F_t^N(X^N) \rt) }{|z|^{2+a}}dzdX^N\\
&\quad = -\frac{1}{2}\frac{1}{N} \sum_{k=1}^N\int_{ \R^{2N} }  \mbox{p.v.}\int_{\R^2} \frac{\Phi\lt( F_t^N(X^N+z\mathbf{a}_k),F_t^N(X^N) \rt) }{|z|^{2+a}}dzdX^N.
\end{split}
\]
Henceforth we are inclined to consider the functional defined on $\PP(\R^{2N})$ as
\bq
\label{eq:FFI}
\begin{split}
\II^N_a(G_N)&=\frac{1}{2}\frac{1}{N} \sum_{k=1}^N\int_{ \R^{2N} }  \mbox{p.v.}\int_{\R^2} \frac{\lt( G_N(x+z\mathbf{a}_k)-G_N(x) \rt)\lt(\ln G_N(x+z\mathbf{a}_k)-\ln G_N(x) \rt) }{|z|^{2+a}}dzdx\\
&\quad =\frac{1}{2}\frac{1}{N} \sum_{k=1}^N \int_{ \R^{2(N-1)} } \int_{  \R^4 }  \frac{\lt( G_N(X^x_k)-G_N(X_k^y) \rt)\lt(\ln G_N(X_k^x)-\ln G_N(X_k^y) \rt) }{|x-y|^{2+a}}dxdy\,dx_1,\cdots,dx_{k-1},dx_{k+1},\cdots,dx_N,\\  
&\quad \mbox{with} \quad X_k^x=(x_1,\cdots,x_{k-1},x,x_{k+1},\cdots,x_N)\\
&\quad =\frac{1}{2}\frac{1}{N} \sum_{k=1}^N \int_{ \R^{2(N-1)} } \int_{  \R^2\times \R^2 }  \frac{\Phi\lt( G_N(X^x_k),G_N(X_k^y) \rt) }{|x-y|^{2+a}}dxdy\,dx_1,\cdots,dx_{k-1},dx_{k+1},\cdots,dx_N,
\end{split}
\eq
as the pendent for fractional diffusion of the normalized Fisher information (with the convention $\II_a^1=I_a$).
\begin{remark}
	In the classical case the Fisher information can be rewrtiten as
	\[
	\begin{aligned}
	\II^N(F^N)&=\frac{1}{N}\int_{ \R^{2N} } \frac{\lt|\nabla F^N\rt|^2}{F^N} dX^N = \frac{1}{N}\sum_{i=1}^N\int_{ \R^{2N} }   \frac{\lt|\nabla_i F^N\rt|^2}{F^N}dX^N\\
	&\quad = \frac{1}{N}\sum_{i=1}^N\int_{ \R^{2N} }  \nabla_i F^N\cdot \nabla_i \ln F^NdX^N,
	\end{aligned}
	\]	
	which has the same the form of the one defined in \eqref{eq:FFI}, except that the $H^1$ inner product between $F^N$ and $\ln F^N$ w.r.t. the $i$-th component is replaced in the fractional case with the $H^{a/2}$ inner product. 
\end{remark}
 This quantity is so far an entropy dissipation, but not an \textit{Information} yet. In oder to properly qualify it as such, we have the

\begin{proposition}
	\label{prop:FFI}
	The fractional Fisher information defined in \eqref{eq:FFI}
	\begin{itemize}
		\item[$(i)$] is proper, convex, 
		\item[$(ii)$] is lower semi continuous w.r.t. the weak convergence in $\PP(\R^{2N})$,
		\item[$(iii)$] is super-additive in the sense that for $G^N\in \mathcal{P}(\R^{2N})$ and $G^i\in \mathcal{P}(\R^{2i}),G^{N-i}\in \PP(\R^{2(N-i)})$ its marginal on $\R^{2i}$ (resp. $\R^{2(N-i)}$ for $i=1,\cdots,N-1$ it holds
		\[
		\II^N_a(G^N)\geq \frac{i}{N}\II^i_a(G^i)+\frac{N-i}{N}\II_a^{N-i}(G^{N-i}).
		\]
		Moreover equality holds if and only $G^N=G^i\otimes G^{N-i}$. 
		\item[$(iv)$] satisfies, for any $G^N\in \PP_{sym}(\R^{2N})$, and $G^k\in \PP_{sym}(\R^{2k})$ its marginal on $\R^{2k}$
		\[
		\II_a^N(G^N)\geq \II_a^k(G^k), 
		\]  
		and for any $g\in \PP(\R^2)$
		\[
		\II_a^N(g^{\otimes N})=I_a(g).
		\]
	
	\end{itemize}

\end{proposition}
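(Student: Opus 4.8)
The strategy is to reduce each claim to a convex-analytic statement about the single nonnegative bilinear-type kernel functional built out of $\Phi$, since $\Phi(x,y)=(x-y)(\ln x-\ln y)$ is the one nonlinearity appearing everywhere. For $(i)$: properness is immediate because $\II_a^N(g^{\otimes N})=I_a(g)$ is finite for nice $g$ (e.g. a Gaussian), and $\II_a^N\geq 0$ since $\Phi\geq 0$ by Lemma \ref{lem:use1}$(i)$ (taking $\alpha=\beta=1$). Convexity follows from Lemma \ref{lem:use1}$(ii)$: $\Phi$ is jointly convex on $(0,\infty)^2$ and $1$-homogeneous, hence $(G,H)\mapsto \Phi(G(X_k^x),H(X_k^y))$ is convex in $(G,H)$; integrating against the positive measure $|x-y|^{-2-a}\,dx\,dy$ (the principal value being harmless after pairing $x\leftrightarrow y$, which symmetrizes the integrand into $\Phi$) preserves convexity, and summing over $k$ and dividing by $N$ keeps it. One should note the integrand is a priori only defined up to the p.v.; the rewriting in \eqref{eq:FFI} shows that after symmetrization it is a genuine nonnegative integrand, so $\II_a^N$ is a well-defined $[0,\infty]$-valued convex functional.

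For $(ii)$, lower semicontinuity, the plan is to invoke the variational (dual) representation that the paper announces it will borrow from \cite{HM}: write $\Phi$ as a supremum of affine functions of $(G,H)$ with continuous coefficients, i.e. $\Phi(u,v)=\sup_{\lambda}\{A(\lambda)u+B(\lambda)v - C(\lambda)\}$ for suitable test functions $\lambda$ (this is exactly the device used for $\Gamma$-l.s.c. of the classical Fisher information, transcribed to the fractional kernel). Then $\II_a^N$ becomes a supremum over finitely-supported test functions of expressions that are \emph{linear} in $G^N$ (they integrate $G^N$ against bounded continuous functions, after handling the singular weight by splitting $|x-y|\le\delta$ vs $|x-y|>\delta$ and using that the singular part carries a good sign). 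A supremum of weakly continuous linear functionals is weakly l.s.c., which gives the claim.

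For $(iii)$ and $(iv)$: super-additivity is the fractional analogue of the classical chain-rule inequality for Fisher information, and I would prove it via the dual representation together with a conditioning/tensorization argument — writing $G^N$ in terms of $G^i$ and the conditional law, and using the convexity of $\Phi$ in each slot plus Jensen against the conditional measure to split the $k$-sum into the block $k\le i$ (yielding $\tfrac{i}{N}\II_a^i(G^i)$ after the same dual argument) and the block $k>i$ (yielding $\tfrac{N-i}{N}\II_a^{N-i}(G^{N-i})$); strict convexity of $\Phi$ off the diagonal and Lemma \ref{lem:use1}$(i)$'s equality case give the equality characterization $G^N=G^i\otimes G^{N-i}$. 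Then $(iv)$ is a corollary: iterating $(iii)$ with $i=k$ and discarding the nonnegative remainder gives $\II_a^N(G^N)\ge\II_a^k(G^k)$ for symmetric $G^N$ (symmetry makes all $k$-marginals equal so the bookkeeping is clean), and the tensor identity $\II_a^N(g^{\otimes N})=I_a(g)$ is a direct computation: for a product density $\ln G_N(X_k^x)-\ln G_N(X_k^y)=\ln g(x)-\ln g(y)$ and $G_N(X_k^x)-G_N(X_k^y)=(g(x)-g(y))\prod_{j\ne k}g(x_j)$, so each of the $N$ terms integrates (the product over $j\ne k$ integrating to $1$) to exactly $I_a(g)$, and $\tfrac12\cdot\tfrac1N\cdot N\cdot 2 I_a(g)=I_a(g)$ once one accounts for the $\tfrac12$ versus the two-variable symmetrization.

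The main obstacle I expect is $(ii)$: making the dual/variational representation of $\Phi$ rigorous in the presence of the principal-value singular kernel $|x-y|^{-2-a}$, i.e. checking that one may exchange the supremum over test functions with the (conditionally convergent) singular integral and that the approximating linear functionals are genuinely weakly continuous on $\PP(\R^{2N})$ despite the non-compactly-supported weight. This is precisely where the cited result of \cite{HM} on $\Gamma$-lower semicontinuity does the work, and the burden is to verify its hypotheses transfer from the local ($H^1$) setting to the fractional ($H^{a/2}$) one; super-additivity $(iii)$, while technical, then follows the same template and should not present a genuinely new difficulty.
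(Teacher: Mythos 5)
Your outline for convexity, for points $(iii)$--$(iv)$ and for the tensorization identity matches the paper's proof in substance (the paper also splits $\ln G_N$ through the conditional density and controls the cross term with Lemma \ref{lem:use1}$(i)$, and reduces $(iv)$ to the $k=1$ term by symmetry). However, your properness argument contains a genuine error: $I_a$ of a Gaussian is \emph{not} finite for $a\in(1,2)$. For $|x-y|$ large the integrand behaves like $g(x)\,|\ln g(y)|\,|x-y|^{-(2+a)}\sim g(x)|y|^{2}|y|^{-(2+a)}=g(x)|y|^{-a}$, which is not integrable at infinity in $\R^2$ since $a<2$. Finiteness of $I_a$ requires the tail of $|\ln g|$ to grow no faster than $|y|^{a-\delta}$; this is exactly why the paper takes the mollifier $\psi_\e(x)=\e^{-2}e^{-\sqrt{1+|x/\e|^2}}$, whose logarithm has \emph{bounded gradient} $\|\nabla\ln\nu_\e\|_{L^\infty}\le\e^{-1}$, so that $|\ln\nu_\e(x)-\ln\nu_\e(y)|\le\e^{-1}|x-y|$ and the far-field integral $\int_{|x-y|\ge 1}|x-y|^{-(1+a)}dy$ converges (using $a>1$). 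You need to replace the Gaussian by such an exponentially-tailed density; the claim as written is false.

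For point $(ii)$ you take a genuinely different route from the paper. You propose the dual representation $\Phi(u,v)=\sup_{t>0}\{(\ln t+1-t^{-1})u+(1-t-\ln t)v\}$ and l.s.c. as a supremum of weakly continuous linear functionals; the paper instead mollifies ($u^{k,\e}_n=u^k_n*\rho_\e$), proves $\II_a^k(u^{k,\e}_n)\le\II_a^k(u^k_n)$ by Jensen, passes to the limit in $n$ at fixed $\e$ via the decomposition $\mathcal{E}_1^n+\mathcal{E}_2^n$ (using Lemma \ref{lem:use1}$(i)$ to get a sign on $\mathcal{E}_2^n$ and the boundedness of the mollified integrands to pass to the limit in $\mathcal{E}_1^n$), and then removes $\e$ by Fatou. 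The duality route can be made to work — one must restrict to test functions equal to $1$ near the diagonal so that the coefficients vanish there, exhaust $\{|x-y|>\delta\}$ by monotone convergence of the nonnegative integrand, and run a measurable-selection/density argument — but you leave exactly this step open and defer it to \cite{HM}, which treats only the local $H^1$ case; \cite[Lemma 5.6]{HM} is what the paper uses later for the $\Gamma$-l.s.c. of $\tilde{\II}_a$ in Corollary \ref{lem:gam}, not for the l.s.c. of $\II_a^N$ itself. As it stands, point $(ii)$ of your plan is an announced strategy rather than a proof; the paper's mollification argument is self-contained and avoids the exchange of supremum and singular integral altogether.
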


\begin{proof}
	Proof of point $(i)$\newline
	Convexity holds form point $(ii)$ of Lemma \ref{lem:use1}. We delay the proof of the fact that $\II_a^N$ is proper after the proof of point $(iv)$.
	\newline
	Proof of point $(ii)$\newline
	Let be $(u^k_n)_{n\in \mathbb{N}}$, such that $u^k_n\overset{*}{\rightharpoonup} u^k\in \PP(\R^{2k})$ and for $\e>0$ set $\rho_\e(x_1,\cdots,x_k)=\frac{1}{(2\pi\e)^{\frac{k}{2}}}e^{-\frac{\sum_{k=1}^N|x_k|^2}{2\e}}$. Then it holds that $u^k_n*\rho_\e=:u^{k,\e}_n\overset{*}{\rightharpoonup} u^{
		k,\e}:=u^k*\rho_\e\in \PP(\R^{2k})$ for any $\e>0$, and that $u^{k,\e}$ is a smooth function which is always strictly larger than $0$. Note that due to point $(ii)$ of Lemma \ref{lem:use1} and Jensen's inequality it holds
	\[
	\begin{split}
	\II_a^k(u^{k,\e}_n)&=\frac{1}{2k} \sum_{j=1}^k \int_{ \R^{2(k-1)} } \int_{  \R^2\times \R^2 }  \frac{\Phi\lt( \int u^{k}_n(X^x_j+z)\rho_\e(z)dz,\int u^{k}_n(X_j^y+z)\rho_\e(z)dz \rt) }{|x-y|^{2+a}}dxdy\,d\hat{X}^j,\\	
	&\quad \leq \frac{1}{2k} \int\sum_{j=1}^k \int_{ \R^{2(k-1)} } \int_{  \R^2\times \R^2 }  \frac{\Phi\lt(  u_n^k(X^x_j+z), u_n^k(X_j^y+z) \rt) }{|x-y|^{2+a}} dxdy\,d\hat{X}^j_N\rho_\e(z)dz=	\II_a^k(u_n^k).
	\end{split}
	\]
Moreover it holds (dealing only with the case $j=1$ by symmetry)

	\[
	\begin{aligned}
	\II^k_a(u^{k,\e}_n)=&\int_{\R^{2(k-1)}} \int_{\R^2\times \R^2} \frac{\lt(u^{k,\e}_n(x,X^k)-u^{k,\e}_n(
		y,X^k)\rt)\lt( \ln u^{k,\e}_n(x,X^k)-\ln u^{k,\e}_n(y,X^k)\rt)}{|x-y|^{2+a}}dxdydX^k\\
	&=\int_{\R^{2(k-1)}} \int_{\R^2\times \R^2} \frac{\lt(u^{k,\e}_n(x,X^k)-u^{k,\e}_n(
		y,X^k)\rt)\lt( \ln u^{k,\e}(x,X^k)-\ln u^{k,\e}(y,X^k)\rt)}{|x-y|^{2+a}}dxdydX^k\\
	&+\int_{\R^{2(k-1)}} \int_{\R^2\times \R^2} \frac{\lt(u^{k,\e}_n(x,X^k)-u^{k,\e}_n(
		y,X^k)\rt)\lt( \ln \lt(\frac{u^{k,\e}_n(x,X^k)}{u^{k,\e}(x,X^k)}\rt)-\ln\lt(\frac{u^{k,\e}_n(y,X^k)}{u^{k,\e}(y,X^k)}\rt)\rt)}{|x-y|^{2+a}}dxdydX^k\\
	&:=\mathcal{E}^n_1+\mathcal{E}^n_2.
	\end{aligned}
	\]
	$\diamond$ Estimate of $\mathcal{E}^n_1$ \newline
	By symmetry we rewrite 
	\[
	\begin{split}
		\mathcal{E}^n_1&= 2\int_{\R^{2k}} u^{k,\e}_n(x,X^k)\int_{  \R^2 } \frac{\lt( \ln u^{k,\e}(x,X^k)-\ln u^{k,\e}(y,X^k)\rt)}{|x-y|^{2+a}}dy dxdX^k.
	\end{split}
	\]
	
	$\diamond$ Estimate of $\mathcal{E}^n_2$ \newline
	Using point $(i)$ of Lemma \ref{lem:use1} and also by symmetry we find that 
	\[
	\begin{split}
	\mathcal{E}^n_2&\geq \int_{\R^{2(k-1)}} \int_{\R^2\times \R^2} \frac{\lt(u^{k,\e}(x,X^k)-u^{k,\e}(
		y,X^k)\rt)\lt(  \frac{u^{k,\e}(x,X^k)}{u^\e(x,X^k)}-\frac{u^\e_n(y,X^k)}{u^\e(y,X^k)}\rt)}{|x-y|^{2+a}}dxdydX^k\\	
	&\quad = 2\int_{\R^{2k}} u^{k,\e}_n(x,X^k)\int_{  \R^2 } \frac{\lt(u^{k,\e}(x,X^k)-u^{k,\e}(
		y,X^k)\rt)}{u^{k,\e}(x,X^k)|x-y|^{2+a}}dy dxdX^k.
	\end{split}
	\]
	Now since the functions
	\[
	(x,X_k)\mapsto \mbox{v.p.}\int_{  \R^2 } \frac{\lt( \ln u^{k,\e}(x,X^k)-\ln u^{k,\e}(y,X^k)\rt)}{|x-y|^{2+a}}dy, 
	\]
	and
	\[
	(x,X_k)\mapsto \mbox{v.p.}\int_{  \R^2 } \frac{\lt(u^{k,\e}(x,X^k)-u^{k,\e}(
		y,X^k)\rt)}{u^{k,\e}(x,X^k)|x-y|^{2+a}}dy,
	\]
	are continuous and bounded for any $\e>0$, we deduce that 
	\[
	\lim_{n\rightarrow \infty}\mathcal{E}_1^n=2\int_{\R^{2k}} u^{k,\e}(x,X^k)\int_{  \R^2 } \frac{\lt( \ln u^{k,\e}(x,X^k)-\ln u^{k,\e}(y,X^k)\rt)}{|x-y|^{2+a}}dy dxdX^k=\II_a^k(u^{k,\e}),
	\]
	and 
	\[
	\lim_{n\rightarrow \infty}\mathcal{E}_2^n=2\int_{\R^{2k}} u^{k,\e}(x,X^k)\int_{  \R^2 } \frac{\lt(u^{k,\e}(x,X^k)-u^{k,\e}(
	y,X^k)\rt)}{u^{k,\e}(x,X^k)|x-y|^{2+a}}dy dxdX^k=0.
	\]
	Hence it deduces that 
	\[
	\liminf_{n}\II_a^k(u^k_n)\geq \liminf_{n}\II_a^k(u^{k,\e}_n)= \II_a^k(u^{k,\e}).
	\]
	Then for almost all $x,y,x_2,\cdots,x_k\in\R^{2(k+1)}$ it holds 
	\[
	\lim_{\e\rightarrow 0}\frac{\Phi(u^{k,\e}(x,X^k),u^{k,\e}(y,X^k)) }{|x-y|^{2+a}}=\frac{\Phi(u^{k}(x,X^k),u^{k}(y,X^k)) }{|x-y|^{2+a}},
	\]
	up to taking a sequence $(\e_n)_{n\geq 1}$ converging to $0$. Hence by Fatou's Lemma it holds 
	\[
	\begin{aligned}
	\II_a^k(u^{k})&=\int_{\R^{2(k-1)}}\int_{\R^2\times\R^2}\frac{\Phi(u^{k}(x,X^y),u^{k}(y,X^k)) }{|x-y|^{2+a}}dxdydx_2\cdots dx_k\\
	&\quad \leq \liminf_{\e>0}\int_{\R^{2(k-1)}}\int_{\R^2\times\R^2}\frac{\Phi(u^{k,\e}(x,X^k),u^{k,\e}(y,X^k)) }{|x-y|^{2+a}}dxdydx_2\cdots dx_k=\liminf_{\e>0}\II_a^k(u^{k,\e}),
	\end{aligned}
	\]
	and therefore
	\[
	\liminf_{n}\II_a^k(u^k_n)\geq \II_a^k(u^{k}).
	\]
	
	 .\newline
	Proof of point $(iii)$\newline
	Let be $G_N\in \PP(\R^{2N})$ and $(X_1,\cdots,X_N)$ a random vector of law $G_N$, fix $i=1,\cdots,N$ and denote 
	\[
	g_{i}(x_1,\cdots,x_i|x_{i+1},\cdots,x_N)=\LL(X_1,\cdots,X_i\, | \, X_{i+1},\cdots,X_N),
	\]
	then 
	\[
	G_N(x_1,\cdots,x_N)=g_i(x_1,\cdots,x_i|x_{i+1},\cdots,x_N) G_{N-i}(x_{i+1},\cdots,x_N)=g_{N-i}(x_{i+1},\cdots,x_N|x_{1},\cdots,x_i)G_{i}(x_{1},\cdots,x_i).
	\]
	Next observe that for $k\leq i$
	\[
	\begin{split}
	\Phi\lt( G_{N}(X^x_k), G_{N}(X_k^y)\rt)&=\lt( G_{N}(X^x_k)- G_{N}(X_k^y)\rt)\lt( \ln G_{i}(X^x_{k,i})-\ln G_{i}(X^y_{k,i})  \rt)\\
	&\quad + \lt( G_{N}(X^x_k)- G_{N}(X_k^y)\rt)\lt( \ln g_{N-i}(X^{N-i}|X^x_{k,i})-\ln g_{N-i}(X^{N-i}|X^y_{k,i})  \rt),
	\end{split}
	\]
	with the notations
	\[
	\begin{split}
	&X^x_k=(x_1,\cdots,x_{k-1},x,x_{k+1},\cdots,x_N),\\
	&X^x_{k,i}= (x_1,\cdots,x_{k-1},x,x_{k+1},\cdots,x_i), \quad \mbox{if} \quad k=i \, \mbox{ the last component is}\, x\\
	&X^{N-i}= (x_{i+1},\cdots,x_N).
	\end{split}
	\]
	Similarly if $k>i$
	\[
	\begin{split}
	\Phi\lt( G_{N}(X^x_k), G_{N}(X_k^y)\rt)&=\lt( G_{N}(X^x_k)- G_{N}(X_k^y)\rt)\lt( \ln G_{N-i}(X^x_{k,N-i})-\ln G_{N-i}(X^y_{k,N-i})  \rt)\\
	&\quad + \lt( G_{N}(X^x_k)- G_{N}(X_k^y)\rt)\lt( \ln g_{i}(X^{i}|X^x_{k,N-i})-\ln g_{i}(X^{i}|X^y_{k,N-i})  \rt),
	\end{split}
	\]
	with the similar notations
	\[
	\begin{split}
	&X^x_{k,N-i}= (x_{i+1},\cdots,x_{k-1},x,x_{k+1},\cdots,x_N), \quad \mbox{if} \quad k=i+1 \, \mbox{ the first component is}\, x\\
	&X^{i}= (x_{1},\cdots,x_i).
	\end{split}
	\]
	Using all this yields

	\[
	\begin{split}
	\II_a^k(G_{N})&=\frac{1}{2}\frac{1}{N} \sum_{k=1}^N \int_{ \R^{2(N-1)} } \int_{  \R^2\times \R^2 }  \frac{\Phi\lt(  G_N(X^x_k), G_N(X_k^y) \rt) }{|x-y|^{2+a}}dxdy\,d\hat{X}^k_N,\\	
	&\quad \geq \frac{1}{2}\frac{1}{N} \int\sum_{k=1}^i \int_{ \R^{2(N-1)} } \int_{  \R^2\times \R^2 }  \frac{	\lt( G_{N}(X^x_k)- G_{N}(X_k^y)\rt)\lt( \ln G_{i}(X^x_k)-\ln G_{i}(X^y_k)  \rt) }{|x-y|^{2+a}} dxdy\,d\hat{X}^k_N\\
	&\quad +\frac{1}{2}\frac{1}{N} \int\sum_{k=1}^i \int_{ \R^{2(N-1)} } \int_{  \R^2\times \R^2 }  \frac{\lt( G_{N}(X^x_k)- G_{N}(X_k^y)\rt)\lt( \ln g_{N-i}(X^{N_i}|X^x_{k,i})-\ln g_{N-i}(X^{N-i}|X^y_{k,i})  \rt) }{|x-y|^{2+a}} dxdy\,d\hat{X}^k_N\\
	&\quad + \frac{1}{2}\frac{1}{N} \int\sum_{k=i+1}^N \int_{ \R^{2(N-1)} } \int_{  \R^2\times \R^2 }  \frac{\lt( G_{N}(X^x_k)- G_{N}(X_k^y)\rt)\lt( \ln G_{N-i}(X^x_{k,N-i})-\ln G_{N-i}(X^y_{k,N-i})  \rt) }{|x-y|^{2+a}} dxdy\,d\hat{X}^k_N\\
	&\quad + \frac{1}{2}\frac{1}{N} \int\sum_{k=i+1}^N \int_{ \R^{2(N-1)} } \int_{  \R^2\times \R^2 }  \frac{\lt( G_{N}(X^x_k)- G_{N}(X_k^y)\rt)\lt( \ln g_{i}(X^{i}|X^x_{k,N-i})-\ln g_{i}(X^{i}|X^y_{k,N-i})  \rt) }{|x-y|^{2+a}} dxdy\,d\hat{X}^k_N\\
	&\quad := \mathcal{J}^i_1+\mathcal{J}^i_2+\mathcal{J}^{N-i}_1+\mathcal{J}^{N-i}_2.
	\end{split}
	\]
	$\diamond$ Estimate of $\mathcal{J}_1$:\newline
	Both terms are treated equally, so we will focus only on the $i$ term. Using Fubini's Theorem yields 
	\[
	\begin{aligned}
	\mathcal{J}^1_1&=\frac{1}{2}\frac{1}{N} \int\sum_{k=1}^i \int_{ \R^{2(N-1)} } \int_{  \R^2\times \R^2 }  \frac{	G_{N}(X^x_{k,i},X^{N-i})\lt( \ln G_{i}(X^x_{k,i})-\ln G_{i}(X^y_{k,i})  \rt) }{|x-y|^{2+a}} dxdy\,d\hat{X}^k_N\\
	&\quad -\frac{1}{2}\frac{1}{N} \int\sum_{k=1}^i \int_{ \R^{2(N-1)} } \int_{  \R^2\times \R^2 }  \frac{	G_{N}(X^y_{k,i},X^{N-i})\lt( \ln G_{i}(X^x_{k,i})-\ln G_{i}(X^y_{k,i})  \rt) }{|x-y|^{2+a}} dxdy\,d\hat{X}^k_N\\
	&\quad =\frac{1}{2}\frac{1}{N} \int\sum_{k=1}^i \int_{ \R^{2(i-1)} } \int_{  \R^2\times \R^2 }  \frac{	\int G_{N}(X^x_{k,i},X^{N-i}) dX^{N-i}\lt( \ln G_{i}(X^x_{k,i})-\ln G_{i}(X^y_{k,i})  \rt) }{|x-y|^{2+a}} dxdy\,d\hat{X}^k_{i}\\
	&\quad -\frac{1}{2}\frac{1}{N} \int\sum_{k=1}^i \int_{ \R^{2(i-1)} } \int_{  \R^2\times \R^2 }  \frac{	\int G_{N}(X^y_{k,i},X^{N-i}) dX^{N-i} \lt( \ln G_{i}(X^x_{k,i})-\ln G_{i}(X^y_{k,i})  \rt) }{|x-y|^{2+a}} dxdy\,d\hat{X}^k_{i}\\
	&\quad =\frac{1}{2}\frac{1}{N} \int\sum_{k=1}^i \int_{ \R^{2(i-1)} } \int_{  \R^2\times \R^2 }  \frac{	\lt( G_{i}(X^x_{k,i})- G_{i}(X^y_{k,i})\rt) \lt( \ln G_{i}(X^x_{k,i})-\ln G_{i}(X^y_{k,i})  \rt) }{|x-y|^{2+a}} dxdy\,d\hat{X}^k_{i}= \frac{i}{N} \II^i_a(G_i).
	\end{aligned}
	\]
	
	$\diamond$ Estimate of $\mathcal{J}_2$:\newline
	Similarly we only treat $\mathcal{J}^i_2$. Using point $(i)$ of Lemma \ref{lem:use1} and once again Fubini's Theorem we get
	\[
	\begin{split}
	\mathcal{J}^i_2&\geq\frac{1}{2}\frac{1}{N} \int\sum_{k=1}^i \int_{ \R^{2(N-1)} } \int_{  \R^2\times \R^2 }  \frac{\lt( G_{i}(X^x_{k,i})- G_{i}(X_{k,i}^y)\rt)\lt( g_{N-i}(X^{N-i}|X^x_{k,i})- g_{N-i}(X^{N-i}|X^y_{k,i})   \rt) }{|x-y|^{2+a}} dxdy\,d\hat{X}^k_N\\
	&= \frac{1}{2N} \int\sum_{k=1}^i \int_{ \R^{2(i-1)} } \int_{  \R^2\times \R^2 }  \frac{\lt( G_{i}(X^x_{k,i})- G_{i}(X_{k,i}^y)\rt)\lt( \int g_{N-i}(X^{N-i}|X^x_{k,i})dX^{N-i}- \int g_{N-i}(X^{N-i}|X^y_{k,i})dX^{N-i}   \rt) }{|x-y|^{2+a}} dxdy\,d\hat{X}^k_{i}\\
	&=0
	\end{split}
	\]
	
	Moreover due to point $(i)$ of Lemma \ref{lem:use1}, $\mathcal{J}_2=0$ only if for any $k=1,\cdots, i$ for almost every $x,y\in \R^2\times \R^2$ and $x_1,\cdots,x_{k-1},x_{k+1},x_i\in \R^{2(i-1)}$ it holds
	\[
	g_{N-i}(X^{N-i}|x_1,\cdots,x_{k-1},x,x_{k+1},x_i)= g_{N-i}(X^{N-i}|x_1,\cdots,x_{k-1},y,x_{k+1},x_i)=\mu(X^{N-i}),
	\] 
	for some $\mu\in \PP(\R^{2(N-i)})$. But necessarily $\mu=G_{N-i}$  and we deduce that
	\[
	G_N=G_i\otimes G_{N-i}.
	\]
	Proof of point $(iv)$\newline
	Note that the symmetry of $G_N$ yields
	\[
	\II_a^N(G_{N})=\frac{1}{2} \int_{ \R^{2(N-1)} } \int_{  \R^2\times \R^2 }  \frac{\Phi\lt(  G_N(X^x_1), G_N(X_1^y) \rt) }{|x-y|^{2+a}}dxdy\,d\hat{X}^1_N.
	\]
	In the tensorised case, Fubini's Theorem yields	
	\[
	\begin{aligned}
	\II^N_a(g^{\otimes N})=&\int \int_{\R^2\times \R^2} \frac{\lt(g(x)\prod_{k=2}^Ng(x_k)-g(y)\prod_{k=2}^Ng(x_k)\rt)\lt( \ln \lt(g(x)\prod_{k=2}^Ng(x_k)\rt)-\ln \lt(g(y)\prod_{k=2}^Ng(x_k)\rt)\rt)}{|x-y|^{2+a}}dxdy\hat{X}^1_N\\
	&=\int\lt( \int_{\R^2\times \R^2}  \frac{\lt(g(x)-g(y)\rt)\lt( \ln (g(x))-\ln (g(y))\rt)}{|x-y|^{2+a}}dxdy\rt) \prod_{k=2}^Ng(x_k)d\hat{X}^1_N= I_a(g).
	\end{aligned}
	\]
	On the other hand, with similar notations as in point $(iii)$ write
	\[
	G_N(X^x_1)=G_k(X^x_1)g_{N-k}(X^{N-k}|X^x_1),
	\]
	and then
	\[
	\begin{split}
	\Phi\lt(  G_N(X^x_1), G_N(X_1^y) \rt)&=\lt( G_{N}(X^x_1)- G_{N}(X_1^y)\rt)\lt( \ln G_{k}(X^x_1)-\ln G_{k}(X^y_1)  \rt)\\
	&\quad + \lt( G_{N}(X^x_1)- G_{N}(X_1^y)\rt)\lt( \ln g_{N-k}(X^{N-k}|X^x_1)- \ln g_{N-k}(X^{N-k}|X^y_1) \rt)\\
	&\quad \geq 	 \lt( G_{N}(X^x_1)- G_{N}(X_1^y)\rt)\lt( \ln G_{k}(X^x_1)-\ln G_{k}(X^y_1)  \rt)\\
	&\quad + \lt( G_{k}(X^x_1)- G_{k}(X_1^y)\rt)\lt( g_{N-k}(X^{N-k}|X^x_1)-  g_{N-k}(X^{N-k}|X^y_1) \rt),
	\end{split}
	\]
	dividing the above inequality by $|x-y|^{2+a}$ and integrating over $dxdydx_2,\cdots,dx_N$ yields the desired result thanks to similar computations as the one done in the proof of point $(iii)$.\newline
	To see that $\II_a^N$ is proper, take $\nu\in \PP(\R^2)$ and $\psi_{\e}=\e^{-2}e^{-\sqrt{1+\e^{-2}|x|^2}}$, and define $\nu^{N,\e}:=(\nu_\e)^{\otimes}:=(\nu*\psi_{\e})^{\otimes N}\in \PP(\R^{2N})$. Then we have
	\[
	\begin{split}
	\II_a^N(\nu^{N,\e})=&I_a(\nu*\psi_\e)=\int_{\R^4}\frac{\Phi(\nu_\e(x),\nu_\e(y))}{|x-y|^{2+a}}dxdy\\
	&\quad \leq \int_{|x-y|\leq 1} \frac{\nu_\e(x)-\nu_\e(y)}{\ln (\nu_\e(x))-\ln(\nu_\e(y))}\frac{|\ln (\nu_\e(x))-\ln(\nu_\e(y))|^2}{|x-y|^{2+a}}dxdy\\
	&\quad + \int_{|x-y|\geq 1} \lt(\nu_\e(x)+\nu_\e(y)\rt)\frac{|\ln (\nu_\e(x))-\ln(\nu_\e(y))|}{|x-y|^{2+a}}dxdy\\
	&\quad \leq \int_{|x-y|\leq 1} \lt(\nu_\e(x)+\nu_\e(y)\rt)\frac{|\ln (\nu_\e(x))-\ln(\nu_\e(y))|^2}{|x-y|^{2+a}}dxdy\\
	&\quad + \int_{|x-y|\geq 1} \lt(\nu_\e(x)+\nu_\e(y)\rt)\frac{|\ln (\nu_\e(x))-\ln(\nu_\e(y))|}{|x-y|^{2+a}}dxdy,
	\end{split}
	\]
	Using \cite[Lemma 5.8]{HM} (see also Lemma \ref{lem:reg} below) we find that $\|\nabla \ln\nu_\e\|_{L^{\infty}(\R^2)} \leq \e^{-1}$. Therefore
	\[
	\II_a^N(\nu^{N,\e})\leq 2\int_{\R^2} \nu_\e(x)\lt(\int_{\R^2}(|x-y|^{-a}\wedge |x-y|^{-(1+a)})dy\rt)dx<\infty.
	\]
	
\end{proof}

\begin{remark}
	All the properties established on the fractional Fisher information can be proved with the same techniques for the classical Fisher information 
	\[
	\II^N(F^N) = \frac{1}{N}\sum_{i=1}^N\int_{ \R^{2N} }  \nabla_i F^N\cdot \nabla_i \ln F^NdX^N,
	\]	
	using this particular form, and a slight modification of point $(i)$ of Lemma \ref{lem:use1} which reads
	\[
	\nabla(fg)\cdot\nabla\ln g= \nabla f \cdot \nabla g+ \frac{f}{g}\lt|\nabla g \rt|^2\geq \nabla f \cdot \nabla g,
	\]	
	for $f,g$ two nonnegative functions. Classically those properties are obtained with the duality form 
	\[
	I(F^N)=\sup_{\varphi \in C_b^1(\R^{2N})} \left\langle F^N,   -\frac{\psi^2}{4}-\nabla\cdot \psi\right\rangle, 
	\]
	see \cite[Lemma 3.7]{HM} for instance. Also note that the statement and proof of Proposition \ref{prop:FFI} is also valid in dimension $d>2$.
\end{remark}

Accordingly to \cite[Definition 5.2]{HM}, define for $\kappa\geq 1$ the sets $\PP_\kappa(\PP(\R^2))$ as
\[
\begin{aligned}
\PP_\kappa(\PP(\R^2))&:=\{\pi \in \PP(\PP(\R^2))\,|\, \int_{\PP(\R^2)}\int_{\R^2}\lal x \ral^{\kappa}\rho(dx)\pi(d\rho)<\infty \}\\
&\quad := \{\pi \in \PP(\PP(\R^2))\,|\, \int_{\PP(\R^2)}M_\kappa(\rho)\pi(d\rho)<\infty \}\\
&\quad := \{\pi \in \PP(\PP(\R^2))\,|\, \MM_{\kappa}(\pi)<\infty \},
\end{aligned}
\]
and
\[
\PP_\kappa(\R^{2N}):=\{(F^N)_{N\geq 1}\,| F^N \in \PP_{\mbox{sym}}(\R^{2N})\, \mbox{and} \, \sup_{N\geq 1}\int_{\R^{2N}}\lal x_1 \ral^{\kappa}F^N(dx_1,\cdots,dx_N)<\infty \}.
\]
In some sense we abuse the $N$ in the notation, so we emphasize that $\PP_\kappa(\R^{2N})$ is a set of sequences. For $\pi \in \PP_{\kappa}(\PP(\R^2))$, define $(\pi_N)_{N\geq 1}$ its Hewitt and Savage (see for instance \cite[Theorem 5.1]{HM}) projection on $\PP_{\kappa}(\R^{2N})$ as
\[
\pi_N:=\int_{\PP(\R^2)}\rho^{\otimes N} \pi(d\rho).
\]
 Next define on $\PP_{\kappa}(\PP(\R^2))$ the functional $\tilde{\II}_a$ as 
	\bq
	\label{eq:FFIN}
	\tilde{\II}_a(\pi)=\sup_{N \geq 1}\II^N_a(\pi_N)=\lim_{N \geq 1}\II^N_a(\pi_N).
	\eq
	The fact that the $\lim$ equals the $\sup$ comes from the fact that the sequence $\lt(\II_a^N(\pi_N)\rt)_{N\geq 1}$ is nondecreasing. Indeed the sequence of symmetric probability measure $(\pi_N)_{\geq 1}$ is compatible, the marginal on $\R^{2(N-1)}$ of $\pi_N$ is $\pi_{N-1}$ and we use point $(iv)$ of Proposition \ref{prop:FFI} to conclude.\\	
	
	We now give the last technical result, which proof is delayed in appendix, and which enables to conclude to the desired $\Gamma$-l.s.c. property in the
	
	\begin{proposition}
		\label{lem:aff}
		The functional $\tilde{\II}_a$ defined in \eqref{eq:FFIN} is affine in the following sense. For any $\pi \in \PP_{\kappa}(\PP(\R^2))$ and any partition of $\PP_{\kappa}(\R^2)$ by some sets $(\omega_i)_{i=1,\cdots,M}$, such that $\omega_i$ is an open set in $\PP_{\kappa}(\R^2)\setminus\lt(\bigcap_{j=1}^{i-1}\omega_j\rt)$ for all $1\leq i\leq M-1$ and $\pi(\omega_i)>0$ for all $1\leq i \leq M$, defining 
		\begin{align*}
		&\alpha_i:=\pi(\omega_i) \quad \mbox{and} \quad \gamma_i=(\alpha_i)^{-1}\mb_{\omega_i}\pi \in \PP_\kappa(\PP(\R^2))\\
		&\gamma_i=0 \quad \text{if} \quad \alpha_i=0,
		\end{align*}
		so that 
		\[
		\pi=\sum_{i=1}^M \alpha_i\gamma_i, \quad \mbox{and} \quad \sum_{i=1}^M\alpha_i=1,
		\]
		it holds 
		\[
		\tilde{\II}_a(\pi)=\sum_{i=1}^M\alpha_i\tilde{\II}_a(\gamma_i).
		\]
	\end{proposition}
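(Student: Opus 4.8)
The inequality $\tilde{\II}_a(\pi)\leq \sum_{i=1}^M\alpha_i\tilde{\II}_a(\gamma_i)$ is the easy one, and the topological assumptions on the $\omega_i$ are not needed for it. Since the Hewitt--Savage projection is linear, $\pi=\sum_i\alpha_i\gamma_i$ gives $\pi_N=\sum_i\alpha_i(\gamma_i)_N$, so the convexity of $\II_a^N$ (point $(i)$ of Proposition \ref{prop:FFI}) together with \eqref{eq:FFIN} yields
\[
\II_a^N(\pi_N)=\II_a^N\lt(\sum_{i=1}^M\alpha_i(\gamma_i)_N\rt)\leq \sum_{i=1}^M\alpha_i\,\II_a^N\lt((\gamma_i)_N\rt)\leq \sum_{i=1}^M\alpha_i\tilde{\II}_a(\gamma_i),
\]
and taking the supremum over $N$ concludes.

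For the reverse inequality I would follow the strategy used for the classical Fisher information in \cite{HM}. Writing $\pi=\alpha_1\gamma_1+(1-\alpha_1)\gamma'$ with $\gamma'=(1-\alpha_1)^{-1}\mb_{\omega_1^c}\pi\in\PP_\kappa(\PP(\R^2))$, and observing that $\{\omega_2,\dots,\omega_M\}$ is a partition of $\PP_\kappa(\R^2)\setminus\omega_1$ of the same nested type, an immediate induction on $M$ reduces the problem to $M=2$, i.e. $\pi=\alpha_1\gamma_1+\alpha_2\gamma_2$ with $\omega_1$ open in $\PP_\kappa(\R^2)$ and $\omega_2=\PP_\kappa(\R^2)\setminus\omega_1$; one may moreover assume $\tilde{\II}_a(\gamma_1),\tilde{\II}_a(\gamma_2)<\infty$, the infinite case being recovered from the same argument. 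Let $\mu^N(x)=\frac1N\sum_{i=1}^N\delta_{x_i}$ be the empirical measure map and set $B_1^N=(\mu^N)^{-1}(\omega_1)$, $B_2^N=\R^{2N}\setminus B_1^N$ (symmetric Borel sets), so that $\pi_N=\pi_N(B_1^N)\gamma_1^N+\pi_N(B_2^N)\gamma_2^N$ with $\gamma_i^N:=\pi_N(B_i^N)^{-1}\mb_{B_i^N}\pi_N$ symmetric. The uniform $\kappa$-moment bound forces $\pi$-a.e.\ $\rho$ to have finite $\kappa$-th moment, so by the law of large numbers in $\PP_\kappa(\R^2)$ the empirical measure of $\rho^{\otimes N}$ converges to $\rho$; together with the Portmanteau theorem (using that $\omega_1$ is open and $\PP_\kappa(\R^2)\setminus\omega_1$ closed) this shows that the $\gamma_i^N$ are asymptotically $\gamma_i$-chaotic and that the mass $\pi_N$ carries near the ``interface'' $\partial\omega_1$ tends to $0$ as $N\to\infty$.

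The genuine difficulty, absent from \cite{HM} since the classical Fisher information is local and already affine on disjoint supports, is that $\II_a^N$ is non-local: a sharp splitting of $\pi_N$ produces in \eqref{eq:FFI} cross terms $\Phi\lt(\gamma_1^N(X_k^x),\gamma_2^N(X_k^y)\rt)$ that are infinite on such a decomposition. I would therefore regularize first, as in the proof of point $(ii)$ of Proposition \ref{prop:FFI}, to work with smooth strictly positive densities, and replace $\mb_{B_i^N}$ by a smooth partition of unity $\chi_1^N+\chi_2^N\equiv 1$ concentrated in a thin neighbourhood of $\partial B_1^N$. By the convexity and ray-affineness of $\Phi$ (point $(ii)$ of Lemma \ref{lem:use1}), the ``diagonal'' part of the resulting integral reproduces $\sum_i\pi_N(B_i^N)\II_a^N((\gamma_i)_N)$ up to an error supported near the interface $\partial\omega_1$, and the remaining cross terms are likewise bounded by this interface mass, which vanishes with $N$ by the concentration above. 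Passing to the limit, and using the monotonicity $\II_a^N((\gamma_i)_N)\nearrow\tilde{\II}_a(\gamma_i)$ (point $(iv)$ of Proposition \ref{prop:FFI}, as already noted after \eqref{eq:FFIN}) together with the lower semicontinuity of $\II_a^N$ (point $(ii)$), one obtains $\liminf_N\II_a^N(\pi_N)\geq \alpha_1\tilde{\II}_a(\gamma_1)+\alpha_2\tilde{\II}_a(\gamma_2)$, which is the missing bound.

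The main obstacle is exactly this quantitative control of the non-local cross terms: one must ensure that the interface mass, which is only known to vanish in the limit $N\to\infty$, kills the a priori divergent interaction between the two pieces, and that this happens uniformly in the regularization parameter, so that the limits in $N$ and in the mollification commute. This couples a concentration estimate for empirical measures in the $\PP_\kappa(\R^2)$ topology --- where the uniform $\kappa$-moment bound is essential both for tightness in $\PP_\kappa(\R^2)$ and to rule out escape of mass --- with the near-affineness of $\Phi$; the remaining subtlety is the bookkeeping of the boundary sets $\partial\omega_i$, which are charged negligibly in the limit thanks to the nested-open structure of the partition.
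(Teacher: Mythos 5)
Your easy inequality $\tilde{\II}_a(\pi)\leq\sum_i\alpha_i\tilde{\II}_a(\gamma_i)$ is correct and matches the trivial half of the argument (convexity of $\II_a^N$ plus linearity of the Hewitt--Savage projection), and you have correctly diagnosed the genuine difficulty: the non-locality of $\II_a^N$ makes the cross terms between the two pieces the whole problem. But your treatment of the hard inequality has a real gap, and the decomposition you choose is not the one that makes the argument work. You split $\pi_N$ by restricting it to the pullback sets $B_i^N=(\mu^N)^{-1}(\omega_i)\subset\R^{2N}$; then $\gamma_i^N=\pi_N(B_i^N)^{-1}\mb_{B_i^N}\pi_N$ is \emph{not} the Hewitt--Savage projection of $\gamma_i$, the weights $\pi_N(B_i^N)$ are not the $\alpha_i$, and you are left having to prove both that $\gamma_i^N$ is asymptotically $\gamma_i$-chaotic \emph{and} that the cross terms are small --- the latter being exactly the hard quantitative estimate, which you assert (``bounded by the interface mass'') but do not prove. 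The paper instead decomposes $\pi$ itself, $\pi=\theta F+(1-\theta)G$ with $F=\theta^{-1}\mb_{\omega_1}\pi$, so that $\pi_N=\theta F^N+(1-\theta)G^N$ holds \emph{exactly} with genuine Hewitt--Savage projections, and then shows that the convexity defect $\theta\II_a^N(F^{N,\e})+(1-\theta)\II_a^N(G^{N,\e})-\II_a^N(\pi^{N,\e})$ tends to $0$. This is done through the explicit identity \eqref{eq:id}, which expresses the defect in terms of the conditional densities $f(\cdot|X^{N-1})$, $g(\cdot|X^{N-1})$ and $\nabla\ln(g/f)$, bounded pointwise via the mollification estimates of Lemma \ref{lem:reg} ($\|\nabla\ln p^\e\|_{L^\infty}\leq\e^{-1}$ and a Harnack-type inequality); the split between $|x-y|\leq R$ and $|x-y|\geq R$ and the quantitative concentration of empirical measures (after carving out a buffer annulus $\mathcal{B}_r\setminus\mathcal{B}_s$ of small $F$-mass) then make every remaining term small. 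None of these ingredients appears in your proposal.

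The second gap is the commutation of the limits $N\to\infty$ and $\e\to0$, which you flag as a ``remaining subtlety'' but do not resolve. The paper avoids the issue entirely: it proves exact affinity for each fixed $\e>0$ on the mollified objects, i.e. $\tilde{\II}_a(\pi^\e)=\theta\tilde{\II}_a(F^\e)+(1-\theta)\tilde{\II}_a(G^\e)$, and only then lets $\e\to0$ using the one-sided monotonicity $\tilde{\II}_a(\pi^\e)\leq\tilde{\II}_a(\pi)$ (Jensen) together with lower semicontinuity. Without this ordering of the limits your argument does not close: the interface mass vanishes only as $N\to\infty$ for fixed $\e$, while the constants in any pointwise control of the integrand blow up as $\e\to0$, so the uniformity you invoke cannot be taken for granted. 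In short, the easy half and the diagnosis are right, but the proof of $\tilde{\II}_a(\pi)\geq\sum_i\alpha_i\tilde{\II}_a(\gamma_i)$ is missing its core estimates.
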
	
	We now can state the key argument which enables in the Diffusion Dominated case to go beyond a convergence/consistency result, and provide a complete propagation of chaos result.
	\begin{corollary}
		\label{lem:gam}
		For any $\pi\in \PP_\kappa(\PP(\R^2))$ it holds
		\[
		\tilde{\II}_a(\pi)=\int_{\PP(\R^2)}I_a(\rho)\pi(d\rho),
		\]
		Moreover the functional $\tilde{\II}_a$ is affine, proper and l.s.c. w.r.t. the weak convergence in $\PP_{\kappa}(\PP(\R^2))$ and satisfies the $\Gamma$-lower semi continuous property, i.e. for any sequence $(F_N)_{N\geq 1}\in \PP_{\kappa}(\R^{2N})$ converging toward $\pi\in \PP_{\kappa}(\PP(\R^2))$ in the sense that
		\[
		\forall j\geq 2, F_N^j\overset{*}{\rightharpoonup}  \pi^j \,\mbox{in} \,\PP(\R^{2j}),
		\]
		where $F_N^j$ denotes the marginal on $\R^{2j}$ of $F_N$, and 
		$\pi^j$ the $\pi$ Hewitt and Savage projection on $\R^{2j}$, then it holds
		\[
		\tilde{\II}_a(\pi)=\int_{\PP(\R^2)}I_a(\rho)\pi(d\rho)\leq \liminf_{N}\II_a^N(F_N).
		\]
		
		\begin{proof}
			This result is an immediate consequence of \cite[Lemma 5.6]{HM}, and Propositions \ref{prop:FFI} and \ref{lem:aff}. We leave the reader check that the last two propositions consist in checking that the family of functionals $(\II_a^N)_{N\geq 1}$ satisfies the assumptions of \cite[Lemma 5.6]{HM}.
		\end{proof}	
		
	\end{corollary}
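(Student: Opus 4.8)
The plan is to deduce everything from the abstract result \cite[Lemma 5.6]{HM}: given a family $(J^N)_{N\geq 1}$ of functionals on $\PP_{\mathrm{sym}}(\R^{2N})$ satisfying a short list of structural properties, it identifies the de Finetti lift $\pi\mapsto\sup_N J^N(\pi_N)$ with the linear functional $\pi\mapsto\int J^1(\rho)\,\pi(d\rho)$ and establishes its $\Gamma$-lower semicontinuity along chaotic sequences. So the actual work is to check that $(\II^N_a)_{N\geq 1}$ meets those hypotheses and then to transcribe the conclusions.

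First I would collect the required structural facts, all already at hand: $\II^N_a$ is proper, convex and weakly l.s.c.\ on $\PP(\R^{2N})$ by Proposition \ref{prop:FFI}$(i)$--$(ii)$; it is super-additive, with equality exactly at tensor products, by Proposition \ref{prop:FFI}$(iii)$; it is monotone under symmetric marginals and tensorizes, $\II^N_a(g^{\otimes N})=I_a(g)$, by Proposition \ref{prop:FFI}$(iv)$; and the lift $\tilde{\II}_a$ is affine in the partition sense by Proposition \ref{lem:aff}. I would also recall, as noted after \eqref{eq:FFIN}, that along the compatible family $(\pi_N)_N$ the sequence $\II^N_a(\pi_N)$ is nondecreasing, so that $\sup_N$ and $\lim_N$ agree in the definition of $\tilde{\II}_a$.

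For the identity $\tilde{\II}_a(\pi)=\int I_a(\rho)\,\pi(d\rho)$, the upper bound is immediate from convexity of $\II^N_a$ and Proposition \ref{prop:FFI}$(iv)$:
\[
\II^N_a(\pi_N)=\II^N_a\Big(\int_{\PP(\R^2)}\rho^{\otimes N}\,\pi(d\rho)\Big)\leq\int_{\PP(\R^2)}\II^N_a(\rho^{\otimes N})\,\pi(d\rho)=\int_{\PP(\R^2)}I_a(\rho)\,\pi(d\rho),
\]
and one takes the supremum over $N$. The reverse inequality is the substantive point: one picks a fine finite partition $(\omega_i)$ of $\PP_\kappa(\R^2)$ on which $\rho$ is weakly almost constant, uses the affineness of Proposition \ref{lem:aff} to split $\tilde{\II}_a(\pi)=\sum_i\alpha_i\tilde{\II}_a(\gamma_i)$, bounds each $\tilde{\II}_a(\gamma_i)$ from below by means of weak l.s.c.\ and super-additivity of $\II^N_a$ by $I_a$ at a representative of $\omega_i$, and then refines the partition, invoking the l.s.c.\ of $I_a$ itself; this is precisely the mechanism packaged in \cite[Lemma 5.6]{HM}, which I would simply invoke once the hypotheses above are verified.

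The remaining assertions follow directly: properness comes from Proposition \ref{prop:FFI}$(i)$ (the de Finetti lift of the smoothed sequence built there has finite information), affineness is Proposition \ref{lem:aff}, and the weak lower semicontinuity of $\tilde{\II}_a$ on $\PP_\kappa(\PP(\R^2))$ together with the $\Gamma$-l.s.c.\ bound $\int I_a(\rho)\,\pi(d\rho)\leq\liminf_N\II^N_a(F_N)$ for $F^j_N\overset{*}{\rightharpoonup}\pi^j$ are the two outputs of \cite[Lemma 5.6]{HM}. The main obstacle is conceptual rather than computational: the lower bound in the identity is not a soft consequence of lower semicontinuity but genuinely rests on the affineness of Proposition \ref{lem:aff}, whose (appendix) proof is the technical heart; once that is granted, the proof of the corollary is bookkeeping that matches the setup of \cite[Lemma 5.6]{HM}.
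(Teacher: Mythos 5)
Your proposal is correct and follows exactly the paper's route: the paper's proof is precisely the one-line invocation of \cite[Lemma 5.6]{HM} after noting that Propositions \ref{prop:FFI} and \ref{lem:aff} verify its hypotheses, which is what you do (with a welcome extra sketch of the Jensen upper bound and of where the affineness of Proposition \ref{lem:aff} enters). No gap; the detail you add is consistent with what the paper leaves to the reader.
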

		This result also provides the so-called $3$-level representation of the fractional Fisher information in $2$d defined in \eqref{eq:FFIN} i.e.
		\[
		\tilde{\II}_a(\pi)=\int_{\PP(\R^2)}I_a(\rho) \pi(d\rho)=\sup_{N\geq 1}\II^N_a(\pi^N)=\lim_{N\geq 1}\II^N_a(\pi^N),
		\] 
		with $\pi^N=\int_{\PP(\R^2)}\rho^{\otimes N} \pi(d\rho)$. 
		Note that this result in the classical case, was one of the novelty provided in \cite{HM}.		
		\subsection{Convergence/consistency of particle system \eqref{eq:part_fr_KS}}
In this section we establish the tightness of the law of the particle system \eqref{eq:part_fr_KS} in both case $\alpha< a$ and $\alpha=a$. Note that in both case we have that
\[
X_t^1=X_0^1+\int_0^t \frac{1}{N}\sum_{j>1}K_{\alpha}(X_s^1-X_s^j)\,ds+ \ZZ^1_t :=X_0^1+J_t^{N,1}+ \ZZ^1_t, 
\]
so that it is enough to show the tightness of the $\lt((J_t^{N,1})_{t\in [0,T]}\rt)_{N\geq 1}$ to deduce the tightness of the law of $\lt((\mei \delta_{X_t^i})_{t\in [0,T]}\rt)_{N\geq 1}$, due to point $(ii)$ of Proposition \ref{prop:chaos}. First we need some moments estimates given in the 

\begin{lemma}
	\label{lem:mom}
	Let $(X_t^1,\cdots,X_t^N)$ be a solution to \eqref{eq:part_fr_KS} with $\alpha\in (1,a]$ (with law $F_t^N$). Then for any $\kappa\in (1,a)$ and $t>0$ there is a constant $C_{a,\kappa,t}$ such that
	\[
	\sup_{i= 1,\cdots,N}	\E\lt[ \lal X_t^i\ral^{\kappa}\rt]=\int_{\R^{2N}} \lal x_1 \ral^\kappa F_t^N\leq C_{a,\kappa}t+\int_{\R^{2N}} \lal x_1 \ral^\kappa F_0^N,
	\]
\end{lemma}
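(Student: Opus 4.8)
The plan is to apply Itô's formula for jump processes to the smooth surrogate $m_\kappa(x)=\lal x\ral^\kappa$ of the function $x\mapsto |x|^\kappa$, evaluated along the first particle $X^1_t$, and then take expectations. Since $\kappa\in(1,a)\subset(1,2)$, the function $m_\kappa$ has bounded first derivatives and the Itô correction coming from the $a$-stable part is integrable; by exchangeability it suffices to estimate the first coordinate only. First I would write, using the martingale form \eqref{eq:martlev} applied to the process $X^1$ driven by $\mathcal Z^1$ together with the drift $J^{N,1}$,
\[
\E\lt[m_\kappa(X_t^1)\rt]=\E\lt[m_\kappa(X_0^1)\rt]+\E\int_0^t \nabla m_\kappa(X_s^1)\cdot \lt(\frac{1}{N}\sum_{j>1}K_\alpha(X_s^1-X_s^j)\rt)ds
+\E\int_0^t\lt(-(-\Delta)^{a/2}m_\kappa\rt)(X_s^1)\,ds,
\]
where one must first justify this identity by a truncation/localization argument (stopping times $\tau_R=\inf\{t:|X^1_t|\geq R\}$, dominated convergence, then $R\to\infty$), noting that along the way the jump-martingale term has zero expectation.

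Next I would estimate the two nontrivial terms separately. For the fractional-Laplacian term, the remark after Lemma~\ref{lem:frlap} gives directly $-(-\Delta)^{a/2}(m_\kappa)(x)\leq C_{a,\kappa}\, m_{\kappa-a}(x)\leq C_{a,\kappa}$, since $\kappa-a<0$ and hence $m_{\kappa-a}\leq 1$; so this contributes at most $C_{a,\kappa}\,t$. For the drift term, I would use $|\nabla m_\kappa(x)|\leq C_\kappa\, m_{\kappa-1}(x)\leq C_\kappa$ (again $\kappa-1\in(0,a-1)$, and $\lal x\ral^{\kappa-1}$ is bounded only in the sense that $|\nabla m_\kappa(x)| = \kappa\lal x\ral^{\kappa-2}|x|\le \kappa\lal x\ral^{\kappa-1}$; more precisely $|\nabla m_\kappa(x)|\le\kappa\lal x\ral^{\kappa-1}$, and since we only need $L^\infty$ control of the gradient we may instead use the cruder bound $|\nabla m_\kappa(x)|\le\kappa$, which already fails for $\kappa>1$). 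Here is the genuine subtlety: $\nabla m_\kappa$ is \emph{not} globally bounded when $\kappa>1$, so one cannot simply pull it out. Instead I would exploit the antisymmetrization trick: by exchangeability,
\[
\E\int_0^t\nabla m_\kappa(X_s^1)\cdot\frac{1}{N}\sum_{j>1}K_\alpha(X^1_s-X^j_s)\,ds
=\frac{N-1}{2N}\E\int_0^t\lt(\nabla m_\kappa(X^1_s)-\nabla m_\kappa(X^2_s)\rt)\cdot K_\alpha(X^1_s-X^2_s)\,ds,
\]
and then invoke point $(iii)$ of Lemma~\ref{lem:use1}, which says $\lt(\lal x\ral^{\kappa-2}x-\lal y\ral^{\kappa-2}y\rt)\cdot(x-y)\geq 0$, i.e. $\lt(\nabla m_\kappa(x)-\nabla m_\kappa(y)\rt)\cdot(x-y)\geq 0$. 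Since $K_\alpha(x-y)=-\frac{x-y}{|x-y|^\alpha}$ is a negative scalar multiple of $(x-y)$, the integrand $\lt(\nabla m_\kappa(X^1_s)-\nabla m_\kappa(X^2_s)\rt)\cdot K_\alpha(X^1_s-X^2_s)$ is $\leq 0$. Hence the whole drift contribution is nonpositive and can simply be dropped.

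Combining, $\E[m_\kappa(X^1_t)]\leq \E[m_\kappa(X^1_0)]+C_{a,\kappa}t$, which after reintroducing the stopping time and passing to the limit $R\to\infty$ by monotone convergence (the left side controlled uniformly in $R$ by the right side) yields the claimed bound with $\int_{\R^{2N}}\lal x_1\ral^\kappa F^N_t = \E[m_\kappa(X^1_t)]$ and likewise at $t=0$. The main obstacle is precisely the rigorous justification of the Itô expansion under the truncation: one needs the localized jump-martingale to have zero expectation and to control the error terms when $|x+z|$ is large relative to $|x|$, but the convexity/monotonicity inequality of Lemma~\ref{lem:use1}$(iii)$ is exactly what makes the drift term harmless regardless of the truncation level, so the argument closes cleanly once the stochastic-calculus bookkeeping is done. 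I would also remark that the constant is independent of $N$ because the prefactor $\frac{N-1}{2N}\leq\frac12$ multiplies a nonpositive quantity and the fractional-Laplacian bound is $N$-free.
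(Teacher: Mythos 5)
Your proof is correct and follows essentially the same route as the paper: Itô's formula applied to $m_\kappa(X^1_t)$, the moment bound \eqref{eq:momfralap} for the fractional-Laplacian term, and the exchangeability/antisymmetrization argument combined with point $(iii)$ of Lemma \ref{lem:use1} to show the drift contribution is nonpositive. The only difference is that you flag the localization/stopping-time bookkeeping explicitly, which the paper passes over silently.
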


\begin{proof}
	Using Ito's formula with the $\CC^2$ functional $m\kappa$ yields
	\[
	\begin{split}
	m_\kappa(X_t^1)& =m_\kappa(X_0^1)-\int_0^t \frac{\chi}{N} a \lt(1+ \lt|X_s^1\rt|^2\rt)^{\frac{\kappa-2}{2} }X_s^{1}\cdot \lt(\sum_{k>1} \frac{Z_s^{1,k}}{|Z_s^{1,k}|^\alpha}\rt)\,ds\\
	&\quad +\int_{[0,t]\times \R^2} \lt(m_{\kappa}(X_{s_-}^{1}+x)-m_{\kappa}(X_{s_-}^{1})-x\cdot \nabla m_{\kappa}(X_{s_-}^{1})  \rt) M_1(ds,dx)\\
	&\quad +\int_{[0,t]\times \R^2} x\cdot \nabla m_{\kappa}(X_{s_-}^{1})   \bar{M}_1(ds,dx).
	\end{split}
	\]
	Taking the expectation yields
	\[
	\begin{split}
	\E\lt[m_{\kappa}(X_{t}^{1})\rt]& =\E\lt[m_{\kappa}(X_{0}^{1})\rt]-\int_0^t \frac{\chi}{N} a \E\lt[\lt(1+ \lt|X_s^1\rt|^2\rt)^{\frac{\kappa-2}{2}} X_s^{1}\cdot \lt(\sum_{k> 1} \frac{Z_s^{1,k}}{|Z_s^{1,k}|^\alpha}\rt)\rt]\,ds\\
	&\quad +\int_0^t\E\lt[c_{2,a}\int_{\R^2}\frac{m_{\kappa}(X_{s}^{1}+x)-m_{\kappa}(X_{s}^{1})-x\cdot \nabla m_{\kappa}(X_{s}^{1}) }{|x|^{2+a}}dx\rt]\,ds.
	\end{split}
	\]
	By ex-changeability and point $(iii)$ of Lemma \ref{lem:use1} we get
	\[
	\E\lt[\lt(1+ \lt|X_s^1\rt|^2\rt)^{\frac{\kappa-2}{2} }X_s^{1}\cdot  \frac{Z_{s}^{1,k}}{|Z_{s}^{1,k}|^\alpha}\rt]=\frac{1}{2}\E\lt[\lt(\lt(1+ \lt|X_s^1\rt|^2\rt)^{\frac{\kappa-2}{2} }X_s^{1}-\lt(1+ \lt|X_s^k\rt|^2\rt)^{\frac{\kappa-2}{2} }X_s^{k}\rt)\cdot  \frac{Z_{s}^{1,k}}{|Z_{s}^{1,k}|^\alpha}\rt]\geq 0,
	\]
	and then
	\[
	\E\lt[m_{\kappa}(X_{t}^{1})\rt]\leq \E\lt[m_{\kappa}(X_{0}^{1})\rt]+\int_0^t\E\lt[-(-\Delta)^{a/2}m_{\kappa}(X_{s}^{1})\rt]\,ds.
	\]
	Using \eqref{eq:momfralap} and ex-changeability of the particles yields the desired conclusion.  
\end{proof}

Then let be $0<s<t<T$ and note that for any $p\in\lt(1,\frac{a}{\alpha-1}\rt)$
\bq
\label{eq:cont}
\begin{split}
\lt|J_t^{N,1}-J_s^{N,1} \rt|&\leq \frac{1}{N}\sum_{j>1}\int_s^t \lt|X_u^1-X_u^j   \rt|^{1-\alpha}\, du\\
&\quad \leq |t-s|^{(p-1)/p} \frac{1}{N}\sum_{j>1} \lt(\int_0^T|X_u^{1}-X_u^j|^{(1-\alpha)p}\rt)^{1/p}\\
&\quad \leq |t-s|^{(p-1)/p} \lt(1+\frac{1}{N}\sum_{j>1} \int_0^T|X_u^{1}-X_u^j|^{(1-\alpha)p}\,du\rt):=|t-s|^{\beta}Z_{N,p}^T.
\end{split}
\eq
Therefore, following a standard procedure (see for instance \cite{GQ,FJ,MSN}), it is enough to show that 
\[
\sup_{N\in \mathbb{N}}\E\lt[Z_{N,p}^T\rt]<\infty,
\]
to deduce the tightness of the $(J_t^{N,1})_{t\in [0,T]}$. Also note that by ex changeability we get
\bq
\label{eq:ExpTi}
\E\lt[Z_{N,p}^T\rt]\leq 1+ \int_0^T \E\lt[ |X_u^{1}-X_u^2|^{-(\alpha-1)p} \rt]\, du.
\eq
We now give the two main results of this section. The first is the 
\begin{proposition}
	\label{prop:tiDD}
	Let be $1<\alpha<a<2$, and $(X_t^1,\cdots,X_t^N)$ a solution to equation \eqref{eq:part_fr_KS} for an initial condition with law $(F^N_0)_{N\geq 1}\in \mathcal{P}_{\kappa}(\R^{2N})$ for some $\kappa\in \lt(1, a\rt)$ and such that 
	\[
	\HH^N(F_0^N)=\frac{1}{N}\int_{\R^{2N}} F_0^N\ln F_0^NdX^N<\infty.
	\]
	Then for any $t>0$ it holds
	\[
	\int_0^t \II_a^N(F_s^N)ds\leq C\lt(\HH^N(F_0^N)+\int_{\R^{2N}}\lal x_1\ral^\kappa F_0^N+t\rt),
	\]
	where $C>0$ is a constant independent of $F_0^N$. In particular for any $\gamma\in(0,a)$ it holds
	\[
	\sup_{N\geq 1}\int_0^T\sup_{1\leq i\neq j\leq N}\E\lt[|X_u^i-X_u^j|^{-\gamma }\rt]\,du <\infty.
	\]
	
\end{proposition}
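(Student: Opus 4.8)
The plan is to follow the entropy–dissipation strategy of \cite{MSN,GQ}: estimate $\HH^N(F^N_t)$ along the Liouville equation of the particle system \eqref{eq:part_fr_KS}, show that the dissipation equals the fractional Fisher information $\II^N_a$ up to a term which is itself controlled by $\II^N_a$ (with a small constant) and by moments and time, and then absorb. First I would write the forward Kolmogorov equation for $F^N_t=\LL(X^1_t,\dots,X^N_t)$. Since the jump part of \eqref{eq:part_fr_KS} is the $\R^{2N}$–valued process $\tilde\ZZ^N$ of Section~2.2 and the drift is $b^N_i(X^N)=\tfrac{\chi}{N}\sum_{j\neq i}K_\alpha(x_i-x_j)$, an Itô computation in $\R^{2N}$ (as in \eqref{eq:martlev} and \cite[Proof of Proposition 5.1]{MSN}) gives, in the weak sense,
\[
\partial_t F^N_t+\sum_{i=1}^N\nabla_i\cdot\bigl(b^N_iF^N_t\bigr)=\sum_{k=1}^N\LL_{x_k}F^N_t,\qquad \LL_{x_k}\phi:=c_{2,a}\,\mathrm{p.v.}\!\int_{\R^2}\frac{\phi(\cdot+z\mathbf{a}_k)-\phi}{|z|^{2+a}}\,dz .
\]
Testing against $\tfrac1N(1+\ln F^N_t)$, the jump part produces exactly $-\II^N_a(F^N_t)$ by the computation carried out just before \eqref{eq:FFI}, while the drift part, after one integration by parts and using $\nabla\cdot K_\alpha(x)=-(2-\alpha)|x|^{-\alpha}$ together with exchangeability, produces $\chi(2-\alpha)\tfrac{N-1}{N}\,\E\bigl[|X^1_t-X^2_t|^{-\alpha}\bigr]\ge0$. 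Hence
\[
\HH^N(F^N_t)+\int_0^t\II^N_a(F^N_s)\,ds=\HH^N(F^N_0)+\chi(2-\alpha)\tfrac{N-1}{N}\int_0^t\E\bigl[|X^1_s-X^2_s|^{-\alpha}\bigr]\,ds .
\]
As usual this is first established for a regularised kernel (with constants uniform in the regularisation) and the resulting estimate is then recovered in the limit using the lower semicontinuity of $\II^N_a$ (point $(ii)$ of Proposition~\ref{prop:FFI}) and of the entropy; I would only sketch this step.

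The key estimate I need is the fractional analogue of \cite[Lemma 3.3]{MSN}: for every $\gamma\in(0,a)$ there are $C_\gamma>0$ and $\theta=\theta(\gamma,a)\in(0,1)$ such that for every symmetric $G^N\in\PP(\R^{2N})$,
\[
\int_{\R^{2N}}|x_1-x_2|^{-\gamma}\,G^N\le C_\gamma\bigl(1+\II^N_a(G^N)^{\theta}\bigr).
\]
To prove it, point $(iv)$ of Proposition~\ref{prop:FFI} reduces matters to the two–particle marginal $G^2$ (via $\II^N_a(G^N)\ge\II^2_a(G^2)$ and monotonicity of $t\mapsto t^\theta$). Writing the left side as $\int_{\R^2}\bigl(\int_{\R^2}|x_1-x_2|^{-\gamma}G^2(x_1,x_2)\,dx_1\bigr)dx_2$, a splitting at $|x_1-x_2|=1$ and Hölder in $x_1$ with an exponent $r\in\bigl(\tfrac{2}{2-\gamma},\tfrac{2}{2-a}\bigr)$ (nonempty since $\gamma<a$) bound the inner integral by $G^1(x_2)+C\|G^2(\cdot,x_2)\|_{L^r_{x_1}}$; interpolating $L^r$ between $L^1$ and $L^{2/(2-a)}$ and using Hölder in $x_2$ then gives $\int_{\R^{2N}}|x_1-x_2|^{-\gamma}G^N\le 1+C\bigl(\int_{\R^2}\|G^2(\cdot,x_2)\|_{L^{2/(2-a)}_{x_1}}\,dx_2\bigr)^{\theta}$ with $\theta=\tfrac2a(1-\tfrac1r)<1$. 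Finally the pointwise bound $(u-v)(\ln u-\ln v)\ge 4(\sqrt u-\sqrt v)^2$ yields $\int_{\R^2}|\sqrt{G^2(\cdot,x_2)}|^2_{H^{a/2}}\,dx_2\lesssim\II^2_a(G^2)$, and the fractional Sobolev inequality $\|f\|_{L^{4/(2-a)}(\R^2)}\lesssim|f|_{H^{a/2}(\R^2)}$ (valid since $a<2$) turns $\|G^2(\cdot,x_2)\|_{L^{2/(2-a)}_{x_1}}=\|\sqrt{G^2(\cdot,x_2)}\|_{L^{4/(2-a)}_{x_1}}^2$ into $\lesssim|\sqrt{G^2(\cdot,x_2)}|^2_{H^{a/2}}$, giving the claim. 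Here the hypothesis $\gamma<a$ is exactly what makes $\theta<1$, which is what allows Young's inequality to absorb the Fisher information.

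With these two ingredients the Proposition follows. For the entropy from below I use that, for $m=c_\kappa e^{-\langle x\rangle^\kappa}\in\PP(\R^2)$ and non-negativity of relative entropy, $\HH^N(F^N_t)\ge\int_{\R^{2N}}\ln m(x_1)\,F^N_t=\ln c_\kappa-\int_{\R^{2N}}\langle x_1\rangle^\kappa F^N_t$, and Lemma~\ref{lem:mom} bounds the last integral by $C_{a,\kappa}t+\int_{\R^{2N}}\langle x_1\rangle^\kappa F^N_0$. Applying the key estimate with $\gamma=\alpha$ and Young's inequality gives $\chi(2-\alpha)\tfrac{N-1}{N}\E[|X^1_s-X^2_s|^{-\alpha}]\le\tfrac12\II^N_a(F^N_s)+C$; inserting this and the entropy lower bound into the identity above and absorbing $\tfrac12\int_0^t\II^N_a$ on the left yields the first displayed inequality. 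For the last assertion, exchangeability, the key estimate with $\gamma\in(0,a)$, and Hölder in time give $\int_0^T\sup_{i\neq j}\E[|X^i_u-X^j_u|^{-\gamma}]\,du\le CT+CT^{1-\gamma/a}\bigl(\int_0^T\II^N_a(F^N_u)\,du\bigr)^{\gamma/a}$, whose right-hand side is bounded uniformly in $N$ because $\sup_N\int_{\R^{2N}}\langle x_1\rangle^\kappa F^N_0<\infty$ (by $(F^N_0)\in\PP_\kappa(\R^{2N})$) and $\sup_N\HH^N(F^N_0)<\infty$ — which holds in particular for $F^N_0=\rho_0^{\otimes N}$ with $\rho_0\in L\log L$, the case relevant to Theorem~\ref{thm:main}.

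The main obstacle is twofold: rigorously justifying the entropy identity in the presence of the non-local generator and the singular drift (handled by a regularisation of $K_\alpha$ and a passage to the limit using the l.s.c.\ of $\II^N_a$, as in \cite{MSN,GQ}), and establishing the fractional Fisher-information inequality above — the only genuinely new analytic point, resting on the comparison $\Phi(u,v)\ge4(\sqrt u-\sqrt v)^2$ and the fractional Sobolev embedding, and crucially on the strict inequality $\alpha<a$.
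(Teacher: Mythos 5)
Your proposal is correct and follows the same overall strategy as the paper: write the Liouville equation, compute the entropy dissipation to get $\frac{d}{dt}\HH^N(F^N_t)=\chi\,c_\alpha\,\frac{N-1}{N}\E\bigl[|X^1_t-X^2_t|^{-\alpha}\bigr]-\II^N_a(F^N_t)$ (your constant $(2-\alpha)$ is in fact the correct divergence of $K_\alpha$; the paper writes $(\alpha-1)$, which is immaterial since both are positive), absorb the singular term via the bound $\int|x_1-x_2|^{-\gamma}G^N\leq C(1+\II^N_a(G^N)^\theta)$ with $\theta<1$, control the entropy from below by the $\kappa$-moment and Lemma \ref{lem:mom}, and conclude the last display by H\"older in time. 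The one place where you genuinely diverge is the proof of that key estimate (the paper's Lemma \ref{lem:FiExp}): the paper performs the unitary change of variables $\Psi(x_1,x_2)=(x_1-x_2,x_2)$, observes that $\II^2_a$ is invariant under $\Psi$, and applies the fractional Gagliardo--Nirenberg--Sobolev inequality of Lemma \ref{lem:GNS} to the one-body marginal of $\tilde F^2=F^2\circ\Psi^{-1}$ (i.e.\ to the law of $X_1-X_2$), whereas you slice in $x_2$, apply H\"older and the fractional Sobolev embedding to each section $G^2(\cdot,x_2)$, and recombine by interpolation and H\"older in $x_2$; both routes rest on $\Phi(u,v)\geq 4(\sqrt u-\sqrt v)^2$ and yield the same exponent $\theta=\tfrac2a(1-\tfrac1r)$, so this is a cosmetic rather than substantive difference. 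Two minor imprecisions, neither a gap: in your final time-H\"older step the exponent should be $\theta\in(\gamma/a,1)$ attached to some admissible $r$ rather than $\gamma/a$ itself (which is only the infimum of admissible exponents), and the regularisation/l.s.c.\ passage you sketch for justifying the entropy identity is indeed what the paper implicitly relies on (cf.\ the remark after Proposition \ref{prop:tiFC}).
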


The proof of this proposition is based on \cite{GQ} itself inspired by \cite{MSN}. It relies on a control of the Fisher information. The second result proved in this section is given in the 
\begin{proposition}
	\label{prop:tiFC}
	Let be $1<\alpha=a<2$ and $(X_t^1,\cdots,X_t^N)$ a solution to equation \eqref{eq:part_fr_KS} for an initial condition with law $(F^N_0)_{N\geq 1}\in \mathcal{P}_{\kappa}(\R^{2N})$ for some $\kappa\in \lt(1, a\rt)$.
	There exists $a^*\in(1,2)$, and $\chi_a:(a^*,2)\mapsto (0,\infty)$ with $\lim_{a\rightarrow 2^-}\chi_a=1$, such that if $a\in (a^*,2)$ and $\chi\in (0,\chi_a)$ then it holds for any $T>0$ and $\e\in(0,1)$
	\[
	\sup_{N\geq 1}\int_0^T\sup_{1\leq i\neq j \leq N}\E\lt[|X_u^i-X_u^j|^{\e-a }\rt]\,du <\infty.
	\]
\end{proposition}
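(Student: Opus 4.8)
The plan is to adapt the strategy of \cite{FJ}. By exchangeability it suffices to estimate $\int_0^T\E[|Y_s|^{\e-a}]\,ds$, where $Y_t:=X_t^1-X_t^2$. The process $Y_t$ solves an SDE whose jump part is driven by $\ZZ^1-\ZZ^2$, an $a$-stable flight with L\'evy measure $2c_{2,a}|z|^{-2-a}dz$, and whose drift is
\[
\frac{\chi}{N}\Bigl(2K_\alpha(Y_t)+\sum_{j>2}\bigl(K_\alpha(X^1_t-X^j_t)-K_\alpha(X^2_t-X^j_t)\bigr)\Bigr),
\]
the factor $2$ coming from the oddness $K_\alpha(-x)=-K_\alpha(x)$. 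First I would apply the jump It\^o formula \eqref{eq:martlev} to $\phi_\eta(Y_t)$, with $\phi_\eta(x)=(|x|^2+\eta^2)^{\e/2}$ as in Lemma \ref{lem:frlap}, $\e\in(0,1)$ and $\eta>0$, and take expectations so the compensated martingale parts vanish.

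After symmetrising the $j>2$ sum by exchangeability, $\frac{d}{dt}\E[\phi_\eta(Y_t)]$ splits into three contributions. The self-interaction term is $-\frac{2\chi\e}{N}\E\bigl[|Y_t|^{2-\alpha}(|Y_t|^2+\eta^2)^{(\e-2)/2}\bigr]\le0$ (since $\nabla\phi_\eta$ is a positive multiple of the identity map while $K_\alpha(x)=-x/|x|^\alpha$), hence harmless and of order $1/N$. The diffusive term equals $2c_{2,a}\E\bigl[\mbox{v.p.}\int \frac{\phi_\eta(Y_t+z)-\phi_\eta(Y_t)-z\cdot\nabla\phi_\eta(Y_t)}{|z|^{2+a}}dz\bigr]$, and Lemma \ref{lem:frlap} bounds it below by
\[
2c_{2,a}\Bigl(\tfrac{\pi\e^2}{2(2-a)}\,\E\bigl[(|Y_t|^2+\eta^2)^{(\e-4)/2}|Y_t|^{4-a}\bigr]-2\pi\e\,C_{\e,a}\,\E\bigl[|Y_t|^{\e-a}\bigr]\Bigr).
\]
The cross-interaction term $\tfrac{\chi(N-2)}{N}\E\bigl[\nabla\phi_\eta(Y_t)\cdot(K_\alpha(X^1_t-X^3_t)-K_\alpha(X^2_t-X^3_t))\bigr]$ is the delicate one: writing $\nabla\phi_\eta(y)=\e y(|y|^2+\eta^2)^{(\e-2)/2}$, using $X^1-X^2=(X^1-X^3)-(X^2-X^3)$ to expand the scalar product, the elementary inequality $|u|^{-s}|v|^{-r}\le |u|^{-s-r}+|v|^{-s-r}$ for $s,r\ge0$, and exchangeability of the pairs, I would bound its modulus by $C_0\,\chi\e\,\E[|Y_t|^{\e-a}]$ for an explicit constant $C_0$ independent of $a$; a careful exploitation of the sign structure of $\langle u-v,|u|^{-\alpha}u-|v|^{-\alpha}v\rangle$ (and of the cancellation produced by the $|X^1-X^2|^2$ contribution) is needed to keep $C_0$ small enough.

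Integrating on $[0,T]$, using $\E[\phi_\eta(Y_T)]\ge0$, $\E[\phi_\eta(Y_0)]\le\E[\lal Y_0\ral]<\infty$, and the moment bound of Lemma \ref{lem:mom} (all uniform in $N$ and in $\eta\le1$), and comparing the positive diffusive term with the bad ones via $(|y|^2+\eta^2)^{(\e-4)/2}|y|^{4-a}\ge 2^{(\e-4)/2}|y|^{\e-a}\mb_{\{|y|\ge\eta\}}$, I would reach an inequality of the form
\[
\Bigl(\tfrac{\pi\e^2c_{2,a}}{2-a}-4\pi\e\,c_{2,a}C_{\e,a}-C_0\chi\e-O(1/N)\Bigr)\int_0^T\E\bigl[(|Y_s|^2+\eta^2)^{(\e-4)/2}|Y_s|^{4-a}\bigr]\,ds\le C(T),
\]
up to a residual contribution supported on $\{|Y_s|<\eta\}$ that has to be absorbed separately. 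Here one invokes the exact asymptotics $c_{2,a}/(2-a)\to 2/\pi$ and $c_{2,a}C_{\e,a}\to0$ as $a\to2^-$ (the latter because $c_{2,a}\to0$ while $C_{\e,a}$ stays bounded), which make the bracket converge, for $\e$ close to $1$, to $2\e^2-C_0\chi\e=\e(2\e-C_0\chi)$; choosing $a^*<2$ so that the bracket remains positive for $a\in(a^*,2)$, and setting $\chi_a$ to be the resulting threshold, gives $\lim_{a\to2^-}\chi_a=1$. Letting $N\to\infty$ (along the relevant subsequence) and then $\eta\to0$, the integrand increases monotonically to $|Y_s|^{\e-a}$, so $\sup_N\int_0^T\E[|Y_s|^{\e-a}]\,ds<\infty$ for $\e$ near $1$, hence for every $\e\in(0,1)$ since $|y|^{\e-a}\le|y|^{\e'-a}+1$ whenever $\e<\e'$; by exchangeability this is exactly the claimed bound.

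The main obstacle is the cross-interaction term: one must dominate it by a constant multiple of $\E[|Y_t|^{\e-a}]$ with a constant both small enough and with the right behaviour as $a\to2$, so that it is controlled by the genuinely positive part of the fractional It\^o correction from Lemma \ref{lem:frlap}. This is precisely where the fair-competition balance $a=\alpha$ is used and where the restrictions $a>a^*$, $\chi<\chi_a$ enter; the book-keeping of the constants $c_{2,a}$, $C_{\e,a}$, $C_0$ needed to certify $\chi_a\to1$, together with the removal of the regularisation $\eta$ (controlling the diagonal part $\{|Y_s|<\eta\}$ uniformly in $N$), is the finicky part of the proof.
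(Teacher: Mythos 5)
Your proposal follows essentially the same route as the paper: apply the jump It\^o formula to $\phi_\eta(X^1_t-X^2_t)$ with $\phi_\eta(x)=(|x|^2+\eta^2)^{\e/2}$, discard the (favourable) self-interaction term, bound the fractional It\^o correction from below via Lemma \ref{lem:frlap}, control the cross-interaction term by a multiple of $\E[|X^1_t-X^2_t|^{\e-a}]$ using exchangeability, use Lemma \ref{lem:mom} to bound $\E[\phi_\eta]$ from above, and let $\eta\to 0$ and $\e\to 1^-$ to read off the threshold $\chi_a$ from the asymptotics $c_{2,a}\pi/(2(2-a))\to 1$ and $c_{2,a}C_{\e,a}\to 0$. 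The one point where your sketch is quantitatively too lossy is exactly the step you flag as delicate: the unweighted inequality $|u|^{-s}|v|^{-r}\le |u|^{-s-r}+|v|^{-s-r}$ applied to $|Y|^{\e-1}|Z^{i,3}|^{1-a}$ produces, after exchangeability, a cross-term coefficient $C_0=4$, which would yield $\lim_{a\to 2^-}\chi_a=\tfrac12$ rather than $1$. The paper gets $C_0=2$ by instead applying H\"older's inequality with the interpolation exponent $p=1+\tfrac{a-1}{1-\e}$ (equivalently, the \emph{weighted} Young inequality $x^{\theta}y^{1-\theta}\le\theta x+(1-\theta)y$ with $\theta=\tfrac{1-\e}{a-\e}$), so that both factors reduce to $\E[|Z^{1,2}|^{\e-a}]$ raised to complementary powers; no sign or cancellation structure of $\langle u-v,|u|^{-\alpha}u-|v|^{-\alpha}v\rangle$ is needed. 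With that substitution your argument coincides with the paper's proof.
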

The proof of this proposition is based on \cite{FJ} itself inspired by \cite{Osa}. Unfortunately, the estimates given here are not sharp, and it seems not clear how to improve them. The proofs of both these propositions are given later in this section. 
	\begin{remark}
	Note that we have not said anything so far regarding the existence of solutions to equation \eqref{eq:part_fr_KS}. The difficulty comes from the non smoothness of the drift coefficient. However this can be solved by mollifying the interaction kernel, and showing that the family of the (unique due to classical SDE theory) solution with such a mollified drift is tight in the mollification parameter for fixed $N$. But as the computations done in the proofs of Proposition \ref{prop:tiDD} and \ref{prop:tiFC} show the tightness uniformly in $N$ with the not mollified interaction kernel, they a fortiori show the tightness in the regularization parameter for the regularized system with fixed number of particles. Hence it is a standard procedure (see the \cite[Theorem 5]{FJ}) to build a solution to the particle system thanks to the tightness argument. We leave the reader check that the less singular kernel or the $a$-stable L\'evy noise considered here instead of the Newtonian force and Brownian motion considered in \cite{FJ}, do not change the argument used by Fournier and Jourdain. However, this argument does not provide uniqueness, but it is not required in order to obtain Theorem \ref{thm:main}.
\end{remark}
\subsubsection{Proof of Proposition \ref{prop:tiDD}}
We begin the proof of this proposition with some fractional logarithmic Gagliardano-Nirenberg-Sobolev inequality. More precisely we have the

\begin{lemma}
	\label{lem:GNS}
	Let be $d\geq 2$, for any $p\in (1,\frac{d}{d-a}]$, there is a constant $C_{p,d,a}>0$ s.t. $\forall u\in \mathcal{P}(\R^d) $ it holds
	  $$\|u\|_{L^{p}(\R^d)}\leq C_{p,d,a}I_a(u)^{1-\frac{d}{a}\lt(\frac{1}{p}-\frac{d-a}{d}\rt)}.$$ 
\end{lemma}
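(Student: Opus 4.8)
The plan is to reduce this fractional log-Gagliardo--Nirenberg--Sobolev inequality to the classical fractional Sobolev embedding via the substitution $v=\sqrt{u}$, exactly as one does in the local case where $I(u)=4\|\nabla\sqrt u\|_{L^2}^2$. First I would dispose of the trivial case: if $u$ is not absolutely continuous or $I_a(u)=+\infty$, the right-hand side is $+\infty$ and there is nothing to prove, so I may assume $u$ is a probability density with $I_a(u)<\infty$. The first genuine step is a pointwise inequality for $\Phi$: applying point $(i)$ of Lemma \ref{lem:use1} with the choice $(a,b,\alpha,\beta)=(\sqrt x,\sqrt y,\sqrt x,\sqrt y)$ yields
\[
\Phi(x,y)=(x-y)(\ln x-\ln y)\ \geq\ 2(\sqrt x-\sqrt y)^2\qquad\text{for all }x,y>0
\]
(a direct one-variable computation even gives the sharp constant $4$, but $2$ is enough). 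Since $\Phi\geq 0$, the principal value in the definition of $I_a$ is an honest Tonelli integral, and integrating this pointwise bound against $|x-y|^{-(d+a)}\,dx\,dy$ gives, up to a harmless dimensional constant,
\[
I_a(u)\ \gtrsim\ \int_{\R^d}\int_{\R^d}\frac{|\sqrt{u(x)}-\sqrt{u(y)}|^2}{|x-y|^{d+a}}\,dx\,dy\ =\ [v]_{\dot H^{a/2}(\R^d)}^2 .
\]
In particular $v\in\dot H^{a/2}(\R^d)$, and since $u$ is a probability density $\|v\|_{L^2(\R^d)}^2=\int u=1$, so $v\in H^{a/2}(\R^d)$.

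Next I would invoke the standard fractional Sobolev embedding $H^{a/2}(\R^d)\hookrightarrow L^{2d/(d-a)}(\R^d)$ (licit since $0<a/2<d/2$, as $1<a<2\leq d$) together with H\"older interpolation between $L^2$ and $L^{2d/(d-a)}$: for every $q\in[2,2d/(d-a)]$,
\[
\|v\|_{L^q(\R^d)}\ \leq\ C_{q,d,a}\,\|v\|_{L^2(\R^d)}^{1-\theta}\,[v]_{\dot H^{a/2}(\R^d)}^{\theta},\qquad \tfrac1q=\tfrac{1-\theta}{2}+\theta\,\tfrac{d-a}{2d}.
\]
I would then take $q=2p$; this is admissible precisely when $p\in(1,d/(d-a)]$, which is the range in the statement, and solving the interpolation identity gives $\theta=\tfrac{d}{a}(1-\tfrac1p)\in(0,1]$. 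Using $\|v\|_{L^2}=1$ and $\|u\|_{L^p}=\|v^2\|_{L^p}=\|v\|_{L^{2p}}^2$ together with the bound on $[v]_{\dot H^{a/2}}^2$ yields
\[
\|u\|_{L^p(\R^d)}\ \leq\ C_{p,d,a}^2\,[v]_{\dot H^{a/2}(\R^d)}^{2\theta}\ \leq\ C_{p,d,a}'\,I_a(u)^{\theta},
\]
and the proof closes once one checks the elementary identity $\theta=\tfrac{d}{a}(1-\tfrac1p)=1-\tfrac{d}{a}(\tfrac1p-\tfrac{d-a}{d})$, which is exactly the exponent claimed.

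I do not expect a real obstacle here: the entire content is the convexity-type pointwise bound $\Phi(x,y)\gtrsim(\sqrt x-\sqrt y)^2$ that turns the fractional Fisher information into the squared homogeneous $\dot H^{a/2}$-seminorm of $\sqrt u$, after which everything is classical harmonic analysis. The only points that need a little care are the reduction to the case $I_a(u)<\infty$ (so that $\sqrt u$ genuinely lies in $\dot H^{a/2}$ and the fractional Sobolev embedding applies), the verification that the interpolation range $q\in[2,2d/(d-a)]$ corresponds exactly to $p\in(1,d/(d-a)]$, and the bookkeeping of the exponent $\theta$; none of these is more than routine.
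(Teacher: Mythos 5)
Your proposal is correct and follows essentially the same route as the paper: the pointwise bound $\Phi(x,y)\gtrsim(\sqrt x-\sqrt y)^2$ turns $I_a(u)$ into $|\sqrt u|^2_{H^{a/2}}$, then the fractional Sobolev embedding into $L^{2d/(d-a)}$ and H\"older interpolation give the claimed exponent. The only (immaterial) difference is that you interpolate at the level of $v=\sqrt u$ between $L^2$ and $L^{2d/(d-a)}$ while the paper interpolates $u$ between $L^1$ and $L^{d/(d-a)}$; after squaring these are the same computation, and your exponent bookkeeping checks out.
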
	

\begin{remark}
	This Lemma can be seen as a generalisation in the fractional case of \cite[Lemma 3.2]{MSN}, in the case $\alpha<2$. However in the case $d=2$, the critical exponent $\frac{d}{d-a}$ can be reached contrary to the case $a=2$.
\end{remark}

\begin{proof}
	
	First recall the classical inequality for $x,y\geq 0$
	\[
	\Phi(x,y)\geq 4 \lt( \sqrt{x}-\sqrt{y} \rt)^2, 
	\]
	so that
	\begin{align*}
	I_a(u)=&\int_{\R^d\times\R^d}\frac{\Phi(u(x),u(y))}{|x-y|^{d+a}}\\
	&\geq 4\int_{\R^d\times\R^d}\frac{\lt( \sqrt{u(x)}-\sqrt{u(y)} \rt)^2}{|x-y|^{d+a}}=4|\sqrt{u}|^2_{H^{a/2}(\R^d)}.
	\end{align*}
	By fractional Sobolev's embeddings (see for instance \cite[Theorem 6.5]{DiNez}) there is $C_{a,d}>0$ such that
	\[
	|\sqrt{u}|^2_{H^{a/2}(\R^d)}\geq C_{a,d} \|\sqrt{u}\|^2_{L^{\frac{2d}{d-a}}(\R^d)}= C_{a,d}\|u\|_{L^{\frac{d}{d-a}}(\R^d)},
	\]
	we conclude to the desired result since for any $p\in (1,\frac{d}{d-a}]$ by interpolation inequality
	\[
	\|u\|_{L^p(\R^d)}\leq \|u\|^{\frac{d}{a}\lt(\frac{1}{p}-\frac{d-a}{d}\rt)}_{L^1(\R^d)}\|u\|_{L^{\frac{d}{d-a}}(\R^d)}^{1-\frac{d}{a}\lt(\frac{1}{p}-\frac{d-a}{d}\rt)}.
	\]
\end{proof}

	\begin{lemma}
		\label{lem:FiExp}
		Let be $\gamma\in (0,a)$ and $p\in \lt(\frac{2}{2-\gamma},\frac{2}{2-a}\rt)$. There exists a constant $C_{\gamma,p,a}>0$ such that for any $F^2\in\mathcal{P}_{\mbox{sym}}(\R^{2d})$ it holds	
		\[
		\int_{\R^d\times \R^d} |x_1-x_2|^{-\gamma}F^2(dx_1,dx_2)\leq C_{\gamma,p,a}\lt(1+\II^2_a(F^2)^{1-\frac{2}{a} \lt(\frac{1}{p}-\frac{2-a}{2}\rt) }\rt).
   		\]

	\end{lemma}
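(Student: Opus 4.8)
The plan is to split $\int_{\R^2\times\R^2}|x_1-x_2|^{-\gamma}F^2(dx_1,dx_2)$ according to whether $|x_1-x_2|\le 1$ or $|x_1-x_2|>1$. On the set $\{|x_1-x_2|>1\}$ one has $|x_1-x_2|^{-\gamma}\le 1$, so that contribution is at most $1$ since $F^2$ is a probability measure. For the singular part we may assume $\II_a^2(F^2)<\infty$ (otherwise there is nothing to prove), which forces $F^2$ to be absolutely continuous, hence so is its first marginal $F^1$; disintegrating, $F^2(dx_1,dx_2)=F^1(dx_1)\,g(x_2\,|\,x_1)\,dx_2$ with $g(\cdot\,|\,x_1)\in\PP(\R^2)$ for $F^1$-a.e.\ $x_1$, so that
\[
\int_{|x_1-x_2|\le 1}|x_1-x_2|^{-\gamma}F^2(dx_1,dx_2)=\int_{\R^2}F^1(dx_1)\int_{|z|\le 1}|z|^{-\gamma}\,g(x_1+z\,|\,x_1)\,dz.
\]

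Next I would estimate the inner integral by Hölder's inequality with exponent $p$ and conjugate exponent $p'$: it is at most $\bigl(\int_{|z|\le 1}|z|^{-\gamma p'}dz\bigr)^{1/p'}\,\|g(\cdot\,|\,x_1)\|_{L^p(\R^2)}$, and the first factor is a finite constant $C_{\gamma,p}$ precisely because the hypothesis $p>\frac{2}{2-\gamma}$ is equivalent to $\gamma p'<2$. Since $g(\cdot\,|\,x_1)$ is a probability density on $\R^2$ and $p\le\frac{2}{2-a}$, Lemma~\ref{lem:GNS} (with $d=2$) gives $\|g(\cdot\,|\,x_1)\|_{L^p}\le C_{p,a}\,I_a\bigl(g(\cdot\,|\,x_1)\bigr)^{\theta}$ with $\theta=1-\frac{2}{a}\bigl(\frac1p-\frac{2-a}{2}\bigr)$; the constraint $p\in\bigl(\frac{2}{2-\gamma},\frac{2}{2-a}\bigr)\subset\bigl(1,\frac{2}{2-a}\bigr)$ ensures $\theta\in(0,1)$. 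Integrating against $F^1$ and applying Jensen's inequality (licit because $\theta\le 1$),
\[
\int_{\R^2}F^1(dx_1)\,\|g(\cdot\,|\,x_1)\|_{L^p}\le C_{p,a}\left(\int_{\R^2}I_a\bigl(g(\cdot\,|\,x_1)\bigr)\,F^1(dx_1)\right)^{\theta}.
\]

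It then remains to bound the conditional Fisher information integral by $\II_a^2(F^2)$, and this is really the crux of the matter. Writing $F^2(x_1,x_2)=F^1(x_1)\,g(x_2\,|\,x_1)$ and using point $(ii)$ of Lemma~\ref{lem:use1} — the homogeneity of $\Phi$ along rays through the origin — one gets $\Phi\bigl(F^2(x_1,x),F^2(x_1,y)\bigr)=F^1(x_1)\,\Phi\bigl(g(x\,|\,x_1),g(y\,|\,x_1)\bigr)$; inserting this into the $k=2$ term of the definition \eqref{eq:FFI} of $\II_a^2$ and discarding the nonnegative $k=1$ term gives, by the same factorisation already performed in the proof of Proposition~\ref{prop:FFI},
\[
\int_{\R^2}I_a\bigl(g(\cdot\,|\,x_1)\bigr)\,F^1(dx_1)\le 4\,\II_a^2(F^2).
\]
Combining the last three displays with the trivial far-field bound yields
\[
\int_{\R^2\times\R^2}|x_1-x_2|^{-\gamma}F^2(dx_1,dx_2)\le 1+C_{\gamma,p,a}\,\II_a^2(F^2)^{\theta}\le C_{\gamma,p,a}\bigl(1+\II_a^2(F^2)^{\theta}\bigr),
\]
which is exactly the claim. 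The main point to watch is the disintegration together with the measurability and $F^1$-a.e.\ finiteness of $x_1\mapsto I_a(g(\cdot\,|\,x_1))$; the latter is automatic from the very inequality above once $\II_a^2(F^2)<\infty$, and everything else is Hölder, the Sobolev-type bound of Lemma~\ref{lem:GNS}, and Jensen.
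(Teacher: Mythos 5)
Your proof is correct, but it follows a genuinely different route from the paper's. The paper applies the shear change of variables $\Psi(x_1,x_2)=(x_1-x_2,x_2)$, observes that $\II^2_a$ is invariant under $\Psi$, and thereby reduces the whole estimate to a single density, namely the first marginal $\tilde f_1^2$ of $F^2\circ\Psi^{-1}$ (the law of $X_1-X_2$): Hölder and Lemma \ref{lem:GNS} are applied once to $\tilde f_1^2$, and $I_a(\tilde f_1^2)\leq \II^2_a(F^2)$ is obtained from the marginal/superadditivity properties of Proposition \ref{prop:FFI}. You instead disintegrate $F^2$ into conditionals $g(\cdot|x_1)$, apply Hölder and Lemma \ref{lem:GNS} fibrewise, interchange the average and the power $\theta$ by Jensen, and close the argument with the exact $1$-homogeneity of $\Phi$ (point $(ii)$ of Lemma \ref{lem:use1}), which identifies $\int I_a(g(\cdot|x_1))F^1(dx_1)$ with a fixed multiple of $\II^2_a(F^2)$ after discarding the nonnegative $k=1$ term. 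The ingredients (Hölder with $\gamma p'<2$, the GNS exponent $\theta=1-\frac{2}{a}(\frac1p-\frac{2-a}{2})$, the splitting at $|x_1-x_2|=1$ versus the paper's bound by $1+$ the contribution of $|y_1|\le 1$) and the final exponent are identical. What the paper's version buys is brevity — one application of the Sobolev bound to one density and no Jensen step; what yours buys is that it bypasses both the change-of-variables invariance computation and the marginal inequality of Proposition \ref{prop:FFI} (in particular you never use the symmetry of $F^2$), replacing them with elementary disintegration plus the homogeneity of $\Phi$. The only point to flag is the same one you flag yourself: the measurability of $x_1\mapsto I_a(g(\cdot|x_1))$ and the convention that $\II^2_a(F^2)=+\infty$ for non-absolutely-continuous $F^2$; the paper glosses over the analogous points, so this is not a gap relative to the paper's standard of rigor.
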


	\begin{proof}
		We introduce the unitary linear transformation 
		\[
		\forall(x_1,x_2)\in \R^2\times \R^2,\quad \Psi(x_1,x_2)=(x_1-x_2,x_2).
		\]
		Denote $F^2=\LL(X_1,X_2)$ and $\tilde{F}^2=F^2\circ \Psi^{-1}$, which is nothing but the law of $\Psi(X_1,X_2)$. A simple substitution shows that $\II_a^2(F^2)=\II_a^2(\tilde{F}^2)$. Indeed
		\[
		\begin{aligned}
		\II_a^2(\tilde{F}^2)=&\int_{\R^2}\int_{\R^2\times \R^2}\frac{\Phi(F^2\circ\Psi^{-1}(x,z),F^2\circ\Psi^{-1}(y,z))}{|x-y|^{2+a}}dxdydz\\
		&\quad =\int_{\R^2}\int_{\R^2\times \R^2}\frac{\Phi(F^2(x+z,z),F^2(y+z,z))}{|(x+z)-(y+z)|^{2+a}}dxdydz\\
		&\quad =\int_{\R^2}\int_{\R^2\times \R^2}\frac{\Phi(F^2(x,z),F^2(y,z))}{|x-y|^{2+a}}dxdydz=\II_a^2(F^2).		
		\end{aligned}
		\]
		Then

	 \[
	 \begin{split}
	  \int_{\R^{2}\times \R^2} |x_1-x_2|^{-\gamma} F^2(x_1,x_2)dx_1dx_2
	 &\quad = \int_{\R^{2}\times \R^2} |y_1|^{-\gamma}\tilde{F}^2(y_1,y_2)dy_1dy_2\\	 
	&\quad \leq 1+\int_{\R^{2}\times \R^2} \mb_{|y_1|\leq 1}|y_1|^{-\gamma}\tilde{F}^2(y_1,y_2)dy_1dy_2\\
	&\quad \leq  1+\int_{|y_1|\leq 1}|y_1|^{-\gamma}\tilde{f}_1^2(y_1)dy_1\\
	\end{split}
	\]		
	where we did the change of variable $(y_1,y_2)=\Psi(x_1,x_2)$, $\tilde{f}_1^2$ denotes the first marginal of $\tilde{F}^2$. Then for any $p>\frac{2}{2-\gamma}$

	\[
	\begin{split}
	\int_{\R^{2}\times \R^2} |x_1-x_2|^{-\gamma} F^2(x_1,x_2)dx_1dx_2&\leq 1+\lt(\int_{|y_1|\leq 1}|y_1|^{-\gamma p'}dy_1 \rt)^{1/p'}\|\tilde{f}_1^2\|_{L^p(\R^2)}\\
	&\quad \leq C_{\gamma,\beta}\lt(1+I^{1-\frac{2}{a} \lt(\frac{1}{p}-\frac{2-a}{2}\rt) }_a(\tilde{f}_1^2)  \rt),
	\end{split}
	\]	
	which concludes the proof since
	\[
	I_a(\tilde{f}_1^2)\leq \II^2_a(\tilde{F}^2)= \II^2_a(F^2)\leq \II^N_a(F^N),
	\]
	thanks to point $(iv)$ Proposition \ref{prop:FFI}.	
	\end{proof}	

\begin{remark}
	Note that in the tensorized case $F^2=f_1\otimes f_2$ (in which we are clearly not), Hardy-Littlewood-Sobolev's inequality yields
	\begin{align*}
	\int_{\R^d\times \R^d} |x_1-x_2|^{-\gamma}F^2(dx_1,dx_2)&\leq C \|f_1\|_{L^{p}}\|f_2\|_{L^{p}}, \ \text{with} \  \frac{2}{p}=2-\frac{\gamma}{d},
	\end{align*}
	and by the same argument as above
	\begin{align*}
	\int_{\R^d\times \R^d} |x_1-x_2|^{-\gamma}F^2(dx_1,dx_2)\leq C \lt(\II^2_a(F^2)\rt)^{\gamma/a},
	\end{align*}
	which holds even in the critical case $\gamma=a$, provided that $a<d$. The latter condition exclude the classical case $a=d=2$.
\end{remark}

\begin{corollary}
	\label{cor:FiExp}
	Let $(X_1,\cdots,X_N)\in\R^{2N}$ be a random vector of law $F^N\in\mathcal{P}_{\mbox{sym}}(\R^{2N})$. Let be $\gamma\in (0,a)$ and $p\in \lt(\frac{2}{2-\gamma},\frac{2}{2-a}\rt)$. There exists a constant $C_{\gamma,p,a}>0$ such that 
	\[
	\sup_{1\leq i\neq j\leq N} \E\lt[ |X_i-X_j|^{-\gamma} \rt]\leq C_{\gamma,p,a}\lt(1+\II^N_a(F^N)^{1-\frac{2}{a} \lt(\frac{1}{p}-\frac{2-a}{2}\rt) }\rt).
	\]

\end{corollary}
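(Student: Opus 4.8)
The plan is to deduce this directly from the two-particle estimate of Lemma \ref{lem:FiExp}, using exchangeability to pass from an arbitrary pair $(i,j)$ to the pair $(1,2)$ and the monotonicity of the fractional Fisher information under marginalization provided by point $(iv)$ of Proposition \ref{prop:FFI}.

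First I would fix a pair $1\leq i\neq j\leq N$ and let $F^2$ denote the law of $(X_i,X_j)$. Since $F^N\in \mathcal{P}_{\mbox{sym}}(\R^{2N})$, this law does not depend on the particular choice of $(i,j)$ and coincides with the $2$-dimensional marginal of $F^N$; in particular $F^2\in \mathcal{P}_{\mbox{sym}}(\R^4)$, so Lemma \ref{lem:FiExp} (applied with $d=2$) gives
\[
\E\lt[|X_i-X_j|^{-\gamma}\rt]=\int_{\R^2\times \R^2}|x_1-x_2|^{-\gamma}F^2(dx_1,dx_2)\leq C_{\gamma,p,a}\lt(1+\II^2_a(F^2)^{\theta}\rt),
\]
where I set $\theta:=1-\frac{2}{a}\lt(\frac{1}{p}-\frac{2-a}{2}\rt)$.

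Next I would check that the hypothesis $\frac{2}{2-\gamma}<p<\frac{2}{2-a}$ forces $\theta\in(0,1)$: since $p>1$ we have $\frac1p-\frac{2-a}{2}<\frac a2$, hence $\theta>0$, while $p<\frac{2}{2-a}$ gives $\frac1p-\frac{2-a}{2}>0$, hence $\theta<1$. Thus $t\mapsto t^{\theta}$ is nondecreasing on $[0,\infty)$. As $F^2$ is the marginal on $\R^4$ of the symmetric probability $F^N$, point $(iv)$ of Proposition \ref{prop:FFI} yields $\II^2_a(F^2)\leq \II^N_a(F^N)$, and therefore $\II^2_a(F^2)^{\theta}\leq \II^N_a(F^N)^{\theta}$ (this also handles trivially the case $\II^N_a(F^N)=+\infty$). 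Substituting into the previous display and taking the supremum over all pairs $i\neq j$, which is legitimate because the resulting bound is uniform in $(i,j)$, gives exactly the asserted inequality.

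There is essentially no genuine obstacle in this proof: the only points needing a moment of care are the verification $\theta\in(0,1)$, which is what makes monotonicity of $t\mapsto t^\theta$ available, and the use of exchangeability of $F^N$ to reduce every pair to the first two coordinates. All the analytic content has already been carried out in Lemma \ref{lem:FiExp} (via the logarithmic Gagliardo--Nirenberg--Sobolev inequality of Lemma \ref{lem:GNS}) and in the superadditivity/monotonicity statements of Proposition \ref{prop:FFI}.
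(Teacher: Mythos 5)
Your proof is correct and follows essentially the same route as the paper: exchangeability reduces every pair $(i,j)$ to the two-particle marginal $F^2$, Lemma \ref{lem:FiExp} gives the bound in terms of $\II^2_a(F^2)$, and point $(iv)$ of Proposition \ref{prop:FFI} upgrades this to $\II^N_a(F^N)$. Your explicit check that the exponent $\theta=1-\frac{2}{a}\lt(\frac{1}{p}-\frac{2-a}{2}\rt)$ lies in $(0,1)$, which justifies the monotonicity of $t\mapsto t^\theta$, is a detail the paper leaves implicit but is welcome.
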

This is a simple consequence of exchangeability and point $(iv)$ of Proposition \ref{prop:FFI}. We now have all the ingredient to mimic the entropy dissipation estimate of \cite{MSN}. Precisely we have the

\begin{lemma}
	\label{lem:EntDis}
	Let be $\alpha \in (1,a)$, $\chi>0$, and $(X_1^t,\cdots,X_t^N)$ a solution to \eqref{eq:part_fr_KS} with initial law $F_0^N\in \PP_{\kappa}(\R^{2N})$ for some $\kappa\in (1,a)$, and denote $F_t^N\in \mathcal{P}_{\kappa}(\R^{2N})$ its law. Then for any $p\in\lt(\frac{2}{2-\alpha},\frac{2}{2-a}\rt)$ there is a constant $C_{\alpha,p,a,\chi,\kappa}>0$ such that
	\[
	\int_0^t \II_a^N(F_s^N)\,ds\leq 	C_{\alpha,p,a,\chi,\kappa}\lt(\HH^N(F_0^N)+\int_{\R^{2N}}\lal x_1\ral^\kappa F_0^N+ t\rt).
	\]
\end{lemma}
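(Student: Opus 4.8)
The goal is the entropy dissipation estimate of \cite{MSN} adapted to the fractional setting. The starting point is an It\^o computation for the normalized entropy $\HH^N(F_t^N)$ along the Liouville equation associated to \eqref{eq:part_fr_KS}. This Liouville equation has the form
\[
\pa_t F_t^N + \chi\,\nabla\cdot\!\left(F_t^N\,b^N\right) + \sum_{k=1}^N\left(-(-\Delta)^{a/2}_{x_k}\right)F_t^N = 0,
\]
where $b^N = b^N(X^N)$ is the mean-field drift with components $b^N_i(X^N)=\frac1N\sum_{j\neq i}K_\alpha(x_i-x_j)$. Differentiating $\HH^N$ in time, the fractional-diffusion part produces exactly $-\II_a^N(F_t^N)$ (this is precisely the computation carried out before \eqref{eq:FFI} in the excerpt), while the transport part produces a drift term
\[
\frac1N\int_{\R^{2N}}\chi\, F_t^N(X^N)\,\nabla\cdot b^N(X^N)\,dX^N
= \frac{\chi}{N}\int_{\R^{2N}} F_t^N \sum_{i=1}^N (\nabla\cdot b^N_i)\,dX^N.
\]
First I would make this rigorous: since the drift is singular one should work with a mollified kernel $K_\alpha^\e$, derive the identity with the regularized system, and pass to the limit at the end using the tightness bounds from Proposition \ref{prop:tiDD}'s proof strategy and the l.s.c.\ of $\II_a^N$ (Proposition \ref{prop:FFI}(ii)); alternatively one invokes that this is the standard procedure of \cite{MSN}.

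Next I would estimate the drift term. Since $\nabla\cdot K_\alpha(x) = -\nabla\cdot\!\left(\frac{x}{|x|^\alpha}\right) = -(2-\alpha)|x|^{-\alpha}$ on $\R^2$, by exchangeability
\[
\left|\frac{\chi}{N}\int F_t^N\sum_i \nabla\cdot b^N_i\,dX^N\right|
\lesssim_{\chi,\alpha} \int_{\R^{2N}} |x_1-x_2|^{-\alpha} F_t^N(dX^N)
= \sup_{i\neq j}\E\!\left[|X^i_t-X^j_t|^{-\alpha}\right].
\]
Because $\alpha\in(1,a)$, Corollary \ref{cor:FiExp} applies with $\gamma=\alpha$: picking $p\in\left(\frac{2}{2-\alpha},\frac{2}{2-a}\right)$ (nonempty since $\alpha<a$) gives
\[
\sup_{i\neq j}\E\!\left[|X^i_t-X^j_t|^{-\alpha}\right]\leq C\left(1+\II_a^N(F_t^N)^{\theta}\right),\qquad \theta = 1-\frac{2}{a}\left(\frac1p-\frac{2-a}{2}\right)\in(0,1).
\]
The crucial point is $\theta<1$: this is the fractional analogue of the sub-linear control in \cite[Lemma 3.3]{MSN}, and it is exactly where $\alpha<a$ (Diffusion Dominated) is used. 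Young's inequality then absorbs $C\,\II_a^N(F_t^N)^\theta \leq \tfrac12\II_a^N(F_t^N)+C_\theta$, so integrating in time yields
\[
\HH^N(F_t^N) + \tfrac12\int_0^t \II_a^N(F_s^N)\,ds \leq \HH^N(F_0^N) + C_\theta\, t.
\]

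Finally, to convert this into the stated bound I must control $\HH^N(F_t^N)$ from below. The entropy is not bounded below on all of $\PP(\R^{2N})$, but it is controlled by a moment: using the standard inequality $F\ln F \geq -C(1+\lal x\ral^\kappa)F - $ (a Gaussian comparison term) one gets $\HH^N(F_t^N)\geq -C_\kappa\left(1+\frac1N\int\sum_i\lal x_i\ral^\kappa F_t^N\right) = -C_\kappa\left(1+\E[\lal X^1_t\ral^\kappa]\right)$ by exchangeability, and Lemma \ref{lem:mom} bounds $\E[\lal X^1_t\ral^\kappa]\leq C_{a,\kappa}t + \int_{\R^{2N}}\lal x_1\ral^\kappa F_0^N$. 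Combining,
\[
\int_0^t \II_a^N(F_s^N)\,ds \leq C_{\alpha,p,a,\chi,\kappa}\left(\HH^N(F_0^N)+\int_{\R^{2N}}\lal x_1\ral^\kappa F_0^N + t\right),
\]
which is the claim. The main obstacle is the rigorous justification of the entropy identity for the singular drift (the mollification/passage-to-the-limit argument) together with verifying that the drift contribution really only costs a power $\theta<1$ of the Fisher information — everything else is Young's inequality, exchangeability, and bookkeeping of constants.
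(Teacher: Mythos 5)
Your proposal is correct and follows essentially the same route as the paper: differentiate $\HH^N(F_t^N)$ along the Liouville equation, identify the fractional dissipation with $-\II_a^N(F_t^N)$, bound the drift contribution by $\E[|X_t^1-X_t^2|^{-\alpha}]$ via exchangeability and Corollary \ref{cor:FiExp} with the sub-linear exponent $\theta<1$ (this is exactly where $\alpha<a$ enters), absorb by Young, and finally lower-bound the entropy by the $\kappa$-moment controlled through Lemma \ref{lem:mom}. Minor remarks: your constant $\nabla\cdot K_\alpha(x)=-(2-\alpha)|x|^{-\alpha}$ is the correct one (the paper writes $(\alpha-1)$, an immaterial slip), and your explicit flag that the identity must first be derived for a mollified kernel is a point the paper only addresses in a separate remark.
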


\begin{proof}
Let be $\phi\in \mathcal{C}^{\infty}(\R^{2N})$. Due to Ito's rule we get
\[
\begin{split}
\phi(X_t^1,\cdots,X_t^N)&=\phi(X_0^1,\cdots,X_0^N)+\int_0^t \sum_{i=1}^N \nabla_{i}\phi(X_s^1,\cdots,X_s^N)\cdot 
\frac{1}{N}\sum_{j\neq i} K_\alpha(X_s^i-X_s^j)\,ds\\
&\quad + \sum_{i=1}^N\int_{[0,t]\times \R^2} \lt(\phi(X_{s_-}^1,\cdot,X_{s_-}^{i}+z,\cdot,X_{s_-}^N)-\phi(X_{s_-}^1,\cdots,X_{s_-}^N)-z\cdot\nabla_i\phi(X_{s_-}^1,\cdots,X_{s_-}^N)\rt)M_i(ds,dz)\\
&\quad + \sum_{i=1}^N\int_{[0,t]\times \R^2} z\cdot\nabla_i\phi(X_{s_-}^1,\cdots,X_{s_-}^N)\bar{M}_i(ds,dz),
\end{split}
\]
denoting $F_t^N=\LL(X_t^1,\cdots,X_t^N) $ taking the expectation yields
and using some integration by parts, we find that $F_.^N$ solves in the weak sense the following linear PDE
\[
\begin{split}
&\pa_t F_t^N+ \sum_{i=1}^N \nabla_i \cdot \lt( 
\frac{1}{N} \sum_{j\neq i} K_\alpha(x_i-x_j) F_t^N  \rt)\\
&\quad+\sum_{i=1}^N c_{2,a}\mbox{p.v.}\int_{\R^2} \frac{F_t^N(x_1,\cdot,x_i+z,\cdot,x_N)-F_t^N(x_1,\cdots,x_N)}{|z|^{2+a}}dz=0.
\end{split}
\]
Hence we deduce, dropping the $t$ in the notation for the sake of simplicity
\[
\begin{split}
\frac{d}{dt}\HH^N(F^N)&=\frac{1}{N}\int_{ \R^{2N} }\pa_tF^N (1+\ln F^N ) \\
&\quad = -\frac{\chi}{N^2}\sum_{j\neq i}\int_{ \R^{2N} } \nabla_i \cdot \lt( 
K_\alpha(x_i-x_j) F^N \rt) (1+\ln F^N )\\
&\quad - \frac{1}{N}\sum_{i=1}^N \int_{ \R^{2N} }\mbox{p.v.}\int_{\R^2} \frac{F^N(x_1,\cdot,x_i+z,\cdot,x_N)-F^N(x_1,\cdots,x_N)}{|z|^{2+a}}(1+\ln F^N(x_1,\cdots,x_N) )dzdx_1 \cdots dx_N\\
&\quad = \frac{\chi}{N^2}\sum_{j\neq i}\int_{ \R^{2N} } \nabla_i \cdot \lt( 
K_\alpha(x_i-x_j)  \rt)F^N(x_1,\cdots,x_N)dx_1,\cdots,dx_N\\
&-\frac{1}{N}\sum_{i=1}^N \int_{ \R^{2N} }
\int_{\R^2} \frac{\lt(F^N(x_1,\cdot,x,\cdot,x_N)-F^N(x_1,\cdot,y,\cdot,x_N)\rt)\lt(\ln F^N(x_1,\cdot,x,\cdot,x_N)-\ln F^N(x_1,\cdot,y,\cdot,x_N)\rt)}{|x-y|^{2+a}}dxdyd\tilde{X}^i_N\\
&\quad= \frac{\chi(N-1)(\alpha-1)}{N}\int_{\R^{2}\times\R^{2}} |x_1-x_2|^{-\alpha}F^2(x_1,x_2)dx_1dx_2 - \II^N_a(F^N),
\end{split}
\]
where $\tilde{X}^i_N=dx_1,\cdot,dx_{i-1},dx_{i+1},\cdot,dx_N$ and $F^2\in \mathcal{P}(\R^2\times\R^2)$ stands for the two particles marginal of $F^N$. But using Corollary \ref{cor:FiExp}, we find for any $p\in \lt(\frac{2}{2-\alpha},\frac{2}{2-a}\rt)$ 
\[
\begin{split}
\frac{d}{dt}\HH^N(F_t^N)&\leq \chi(\alpha-1)C_{\alpha,p,a}\lt(1+\II^N_a(F_t^N)^{1-\frac{2}{a} \lt(\frac{1}{p}-\frac{2-a}{2}\rt) }\rt)- \II^N_a(F^N_t)\\
&\quad \leq \chi(\alpha-1)C_{\alpha,p,a} -\lt(\II^N_a(F^N_t)-\chi(\alpha-1)C_{\alpha,p,a}\II^N_a(F^N_t)^{1-\frac{2}{a} \lt(\frac{1}{p}-\frac{2-a}{2}\rt) }\rt)\\
&\quad  \leq C_{\alpha,p,a,\chi}-\frac{1}{2}\II^N_a(F^N_t),
\end{split}
\]
so that
\bq
\label{eq:dem1}
\HH^N(F_t^N)+\int_0^t \frac{1}{2}\II^N_a(F^N_s)ds\leq \HH^N(F_0^N)+ C_{\alpha,p,a,\chi}t.
\eq
Then define $G^N_{\kappa,\lambda}=e^{-\sum_{i=1}^N \lambda_\kappa|x_i|^\kappa}$, with $\lambda_\kappa>0$ being such that
\[
\int_{\R^2} e^{-\lambda_\kappa|x|^{\kappa}} dx=1,
\]
then with $h(s)=s\ln s -s+ 1\geq 0$
\begin{align*}
\int_{\R^{2N}}F_t^N\ln F_t^N&=\int_{\R^{2N}}G^N_{\kappa,\lambda} h\lt( \frac{F_t^N}{G^N_{\kappa,\lambda}a} \rt)+\int_{\R^{2N}}F_t^N\ln G^N_{\kappa,\lambda}\\
&\geq -\lambda_\kappa\sum_{i=1}^N\int_{\R^{2N}}|x_i|^\kappa F_t^N.
\end{align*}
So that by symmetry
\[
\HH^N(F_t^N)\geq-\lambda_\kappa\int_{\R^{2N}}\lal x_1\ral^\kappa F_t^N
\]
and summing $\lambda_\kappa\int_{\R^{2N}}\lal x_1\ral^\kappa F_t^N$ to \eqref{eq:dem1}, combined to Lemma \ref{lem:mom} yields 
\begin{align*}
\lt(\HH^N(F_t^N)+\lambda_\kappa\int_{\R^{2N}}\lal x_1\ral^\kappa F_t^N \rt)+\int_0^t \frac{1}{2}\II^N_a(F^N_s)ds\leq \HH^N(F_0^N)+\lambda_\kappa\int_{\R^{2N}}\lal x_1\ral^\kappa F_0^N+\lt(C_{\alpha,p,a,\chi}+\lambda_\kappa C_{a,\kappa}\rt)t
\end{align*}
which concludes the desired result, since the l.h.s. of the above inequality is the sum of two nonnegative term. 
\end{proof}

Combining Lemmas \ref{lem:EntDis} and \ref{lem:GNS} concludes the proof of Proposition \ref{prop:tiDD}.

\subsubsection{Proof of Proposition \ref{prop:tiFC}}

In this section now set $\alpha=a$. In this case we extend the method used in \cite{FJ}. In this case let $(X^1_t\cdots,X_N^t)_{t\in[0,T]}$ be a solution to \eqref{eq:part_fr_KS} and denote $Z^{i,j}_s:=X_s^i-X_s^j$ note that it solves 
\[
Z_t^{i,j}=Z_0^{i,j}-\frac{\chi}{N}\int_0^t \sum_{k\neq i,j}\lt( \frac{Z_s^{i,k}}{|Z_s^{i,k}|^a}-\frac{Z_s^{j,k}}{|Z_s^{j,k}|^a}\rt)\, ds -\frac{2\chi}{N}\int_0^t \frac{Z_s^{i,j}}{|Z_s^{i,j}|^a}\, ds+\int_{[0,t]\times \R^2}x \lt(\bar{M}_i-\bar{M}_j\rt)(ds,dx).
\]
Denote 
\[
H^{i,j}_t:= \int_{[0,t]\times \R^2}x \lt(\bar{M}_i-\bar{M}_j\rt)(ds,dx).
\]
It holds (see \cite{Cin}) for any $r\in \R^2$, since $M_i$ is independent of $M_j$
\[
\begin{split}
\E\lt[e^{i r\cdot H^{i,j}_t} \rt]&=\E\lt[e^{i r\cdot\int_{[0,t]\times \R^2}x \bar{M}_i(ds,dx)} \rt]\E\lt[e^{-i r\cdot\int_{[0,t]\times \R^2}x \bar{M}_j(ds,dx)} \rt]\\
&\quad = e^{-t 2 c_{a,2}|r|^a},
\end{split}
\]
hence it deduces that
\[ 
H^{i,j}_t\overset{(\mathcal{L})}{=}\int_{[0,t]\times \R^2}2^{1/a}x\bar{M}_i(ds,dx).
\]
Let be $\e\in (0,1)$, $\eta>0$ and similarly as in Lemma \ref{lem:frlap} define 
\[
\phi_{\eta}(x)=\eta^\e \lal \frac{x}{\eta} \ral^\e= \lt(|x|^2+\eta^2\rt)^{\frac{\e}{2}},
\] 
using Ito's rule yields
\bq
\label{eq:ITO}
\begin{split}
\phi_{\eta}\lt(Z_t^{i,j}\rt)& =\phi_{\eta}\lt(Z_0^{i,j}\rt)-\int_0^t \frac{\chi}{N} \e \lt(|Z_s^{i,j}|^2+\eta^2\rt)^{\frac{\e-2}{2}}Z_s^{i,j}\cdot \lt(\sum_{k\neq i,j}\lt( \frac{Z_s^{i,k}}{|Z_s^{i,k}|^a}-\frac{Z_s^{j,k}}{|Z_s^{j,k}|^a}\rt)\rt)\, ds\\
&\quad- \int_0^t \frac{2\chi\e}{N} \lt(|Z_s^{i,j}|^2+\eta^2\rt)^{\frac{\e-2}{2}}\lt|Z_s^{i,j}\rt|^{2-a}\,ds\\
&\quad +\int_{[0,t]\times \R^2} \lt(\phi_{\eta}\lt(Z_{s_-}^{i,j}+2^{1/a}x\rt)-\phi_{\eta}\lt(Z_{s_-}^{i,j}\rt)-2^{1/a}x\cdot\nabla\phi_{\eta}\lt(Z_{s_-}^{i,j}\rt)  \rt) M_i(ds,dx)\\
&\quad +\int_{[0,t]\times \R^2} 2^{1/a}x\cdot\nabla\phi_{\eta}\lt(Z_{s_-}^{i,j}\rt)   \bar{M}_i(ds,dx).
\end{split}
\eq
Taking the expectation yields 
\[
\begin{split}
\E\lt[\phi_{\eta}\lt(Z_t^{i,j}\rt)\rt]&=\E\lt[\phi_{\eta}\lt(Z_0^{i,j}\rt)\rt]-\int_0^t \frac{\chi}{N} \e \E\lt[\lt(|Z_s^{i,j}|^2+\eta^2\rt)^{\frac{\e-2}{2}}Z_s^{i,j}\cdot \lt(\sum_{k\neq i,j}\lt( \frac{Z_s^{i,k}}{|Z_s^{i,k}|^a}-\frac{Z_s^{j,k}}{|Z_s^{j,k}|^a}\rt)\rt)\rt]\, ds\\
&\quad- \int_0^t \frac{2\chi\e}{N} \E\lt[\lt(|Z_s^{i,j}|^2+\eta^2\rt)^{\frac{\e-2}{2}}|Z_s^{i,j}|^{2-a}\rt]\,ds\\
&\quad +\int_0^t\E\lt[c_{2,a}\int_{\R^2}\frac{\phi_{\eta}\lt(Z_{s}^{i,j}+2^{1/a}x\rt)-\phi_{\eta}\lt(Z_{s}^{i,j}\rt)-2^{1/a}x\cdot\nabla\phi_{\eta}\lt(Z_{s}^{i,j}\rt) }{|x|^{2+a}}dx\rt]\,ds.
\end{split}
\]
Note that the change of variable $x'=2^{1/a}x$ yields
\begin{align*}
c_{2,a}\int_{\R^2}\frac{\phi_{\eta}\lt(Z+2^{1/a}x\rt)-\phi_{\eta}\lt(Z\rt)-2^{1/a}x\cdot\nabla\phi_{\eta}\lt(Z\rt) }{|x|^{2+a}}dx&=c_{2,a}\int_{\R^2}\frac{\phi_{\eta}\lt(Z+x'\rt)-\phi_{\eta}\lt(Z\rt)-x'\cdot\nabla\phi_{\eta}\lt(Z\rt) }{|2^{-1/a}x'|^{2+a}}2^{-2/a}dx'\\
&=-2(-\Delta)^{a/2} \phi_\eta(Z)
\end{align*}
Also note that due to ex changeability we find
\[
\begin{split}
&A_s^{i,j}:=\E\lt[\lt(|Z_s^{i,j}|^2+\eta^2\rt)^{\frac{\e-2}{2}}Z_s^{i,j}\cdot \lt(\sum_{k\neq i,j}\lt( \frac{Z_s^{i,k}}{|Z_s^{i,k}|^a}-\frac{Z_s^{j,k}}{|Z_s^{j,k}|^a}\rt)\rt)\rt]\\
&\quad = \sum_{k\neq i,j} \lt( \E\lt[\lt(|Z_s^{i,j}|^2+\eta^2\rt)^{\frac{\e-2}{2}}Z_s^{i,j}\cdot  \frac{Z_s^{i,k}}{|Z_s^{i,k}|^a}\rt]-\E\lt[\lt(|Z_s^{i,j}|^2+\eta^2\rt)^{\frac{\e-2}{2}}Z_s^{i,j}\cdot  \frac{Z_s^{j,k}}{|Z_s^{j,k}|^a}\rt]\rt)\\
&\quad \leq \sum_{k\neq i,j} \lt( \E\lt[\lt(|Z_s^{i,j}|^2+\eta^2\rt)^{\frac{\e-2}{2}}\lt|Z_s^{i,j}\rt|\lt|Z_s^{i,k}\rt|^{1-a}\rt]+\E\lt[\lt(|Z_s^{i,j}|^2+\eta^2\rt)^{\frac{\e-2}{2}}\lt|Z_s^{i,j}\rt|\lt|Z_s^{j,k}\rt|^{1-a}\rt]\rt)\\
&\quad = 2\sum_{k\neq i}\E\lt[\lt(|Z_s^{i,j}|^2+\eta^2\rt)^{\frac{\e-2}{2}}\lt|Z_s^{i,j}\rt|\lt|Z_s^{i,k}\rt|^{1-a}\rt]\\
&\quad \leq 2\sum_{k\neq i} \E\lt[\lt(\lt(|Z_s^{i,j}|^2+\eta^2\rt)^{\frac{\e-2}{2}}\lt|Z_s^{i,j}\rt|\rt)^{p}\rt]^{1/p}\E\lt[\lt|Z_s^{i,k}\rt|^{(1-a)p/(p-1)}\rt]^{(p-1)/p}\\
&\quad \leq 2(N-2)\E\lt[\lt(\lt(|Z_s^{1,2}|^2+\eta^2\rt)^{\frac{\e-1}{2}}\rt)^{p}\rt]^{1/p}\E\lt[\lt|Z_s^{1,2}\rt|^{(1-a)p/(p-1)}\rt]^{(p-1)/p},
\end{split}
\] 
for any $p>1$. Choosing $p= 1+\frac{a-1}{1-\e}$ yields 
\[
A_s^{i,j}\leq 2(N-2)\E\lt[\lt(\lt(|Z_s^{1,2}|^2+\eta^2\rt)^{\frac{\e-1}{2}}\rt)^{\frac{a-\e}{1-\e}}\rt]^{\frac{1-\e}{a-\e}}\E\lt[\lt|Z_s^{1,2}\rt|^{\e-a}\rt]^{\frac{a-1}{a-\e}   }.
\]
Putting all those estimates together and using also Lemma \ref{lem:frlap}, we get
\bq
\label{eq:regcomp}
\begin{split}
\E\lt[\phi_{\eta}\lt(Z_t^{1,2}\rt)\rt]&\geq-\frac{2\chi\e(N-2) }{N}\int_0^t\E\lt[\lt(|Z_s^{1,2}|^2+\eta^2\rt)^{\frac{\e-a}{2}}\rt]^{\frac{1-\e}{a-\e}}\E\lt[\lt|Z_s^{1,2}\rt|^{\e-a}\rt]^{\frac{a-1}{a-\e}   }\,ds- \int_0^t \frac{2\chi\e}{N} \E\lt[\lt(|Z_s^{i,j}|^2+\eta^2\rt)^{\frac{\e-a}{2}}\rt]\,ds\\
&\quad +2c_{2,a} \e \int_0^t \lt(\frac{\e\pi}{2(2-a)} \E\lt[ \lt(|Z_s^{1,2}|^2+\eta^2\rt)^{\frac{\e-4}{2}}|Z_s^{1,2}|^{4-a} \rt] -2\pi C_{\e,a}\E\lt[|Z_s^{1,2}|^{\e-a} \rt] \rt) \,ds.
\end{split}
\eq
On the other hand, provided that $\eta\in(0,1)$, since $\kappa>1>\e$ using ex-changeability 
\[
\begin{split}
\E\lt[\phi_{\eta}\lt(Z_t^{1,2}\rt)\rt]&\leq\E\lt[\lal Z_t^{1,2}\ral^{\kappa}\rt]\\
&\leq 2^{\frac{\kappa}{2}+1}\E\lt[\lal X_t^{1}\ral^{\kappa}\rt]\leq 2^{\frac{\kappa}{2}+1}C_{a,\kappa,t},
\end{split}
\]
where $C_{a,\kappa,t}$ is the constant exhibited in Lemmma \ref{lem:mom}.
This and letting $\eta$ go to $0$ in \eqref{eq:regcomp} yields

\[
\frac{2^{\frac{\kappa}{2}+1}C_{a,\kappa,t}}{2\e\lt( c_{2,a}(\frac{\e\pi}{2(2-a)}-2\pi C_{\e,a}) - \chi\rt)}\geq\int_0^t\E\lt[\lt|Z_s^{1,2}\rt|^{\e-a}\rt]\,ds,
\]
provided that
\[
\frac{\e c_{2,a}\pi}{2(2-a)}-2\pi c_{2,a}C_{\e,a} >\chi.
\]
where we recall that $C_{\e,a}=\lt(\frac{2-\e}{\sqrt{4-\e}(3-a)\e}+\frac{1 }{a\e}\rt)$. Choosing $\e$ as close to $1$ as desired, by continuity the condition thus becomes

\bq
\label{eq:cond}
\frac{c_{2,a}\pi}{2(2-a)}-c_{2,a}2\pi\lt(\frac{1}{\sqrt{3}(3-a)}+\frac{1 }{a}\rt) >\chi.
\eq

Note that 
\begin{align*}
& \lim_{a\rightarrow 2^{-}}c_{2,a}=0, \\
& \lim_{a\rightarrow 2^{-}}\frac{c_{2,a}\pi}{2(2-a)}=\lim_{a\rightarrow 2^{-}}\frac{2^{a} a^2 \Gamma^{2}\lt(\frac{a}{2}\rt)\pi}{8\pi^2} \frac{\sin\lt(a \frac{\pi}{2}\rt)}{2-a}=1,
\end{align*}
since $\lim_{a\rightarrow 2^{-}} \frac{\sin\lt(a \frac{\pi}{2}\rt)}{2-a}=\frac{\pi}{2}$. Then we define $a^*$ such as
\[
a^*=\inf_{ \tilde{a} \in (1,2) } \Bigl\{ \forall a>\tilde{a}, \ \frac{c_{2,a}\pi}{2(2-a)}-c_{2,a}2\pi\lt(\frac{1}{\sqrt{3}(3-a)}+\frac{1 }{a}\rt)>0  \Bigr\}, 
\]
and for $a>a^*$, we define $\chi_a$ as
\[
\chi_a=\frac{c_{2,a}\pi}{2(2-a)}-c_{2,a}2\pi\lt(\frac{1}{\sqrt{3}(3-a)}+\frac{1 }{a}\rt),
\]
and the proof of Proposition \ref{prop:tiFC} is completed.
\subsubsection{Martingale method for convergence/consistency}
Using now Propositions \ref{prop:tiDD} and \ref{prop:tiFC}, we can give the

\begin{theorem}
	\label{thm:conv}
	Let be $T>0$ and either $1<\alpha<a<2$ and $\chi>0$ or $a^*<\alpha=a<2$ and $\chi\in (0,\chi_a)$. Consider $(X_t^i)_{t\in[0,T], i=1,\cdots,N}$ a sequence of solution to \eqref{eq:part_fr_KS}, and denote $\mu_t^N:=\frac{1}{N}\sum_{i=1}^N \delta_{X_t^i}$ the associated empirical measure. Assume that $F_0^N=\LL\lt( X_0^1,\cdots,X_0^N \rt)$ the law of the initial condition satisfies 
	\[
	F_0^N=\rho_0^{\otimes N}, \, \mbox{with} \, \rho_0\in L\log L(\R^2)\cap \PP_{\kappa}(\R^2), \, \text{if} \, a>\alpha, 
	\]
	or
	\[
	(F_0^N)_{N\geq 1}\, \mbox{is} \, \rho_0-\mbox{chaotic}, \,
	(F_0^N)_{N\geq 1}\in \PP_\kappa(\R^{2N}), \,  \text{if} \, a=\alpha\in (a^*,2), 
	\]
	for some $\kappa \in \lt(1,a\rt)$. Then
	\begin{itemize}
		\item[$(i)$] $(\mu^N_.)_{N\in \mathbb{N}}$ is tight in $\PP(\DD([0,T],\R^2))$
		\item[$(ii)$] any accumulation point of $(\mu^N_.)_{N\in \mathbb{N}}$ almost surely belongs to $\mathcal{S}_{\alpha}^a$ in case $a>\alpha$ or  $\mathcal{S}_{a}$ in case $a=\alpha$ respectfully defined as
		\bq
		\label{eq:deflimset}
		\begin{split}
			&\mathcal{S}_{\alpha}^a=\{ \rho\in \PP\lt(\DD\lt(0,T;\R^2)\rt) \rt) \, | \, \rho_.=\LL(\XX_.), \, (\XX_t)_{t\in[0,T]} \mbox{solution to \eqref{eq:NLSDE}, and } \, \int_0^T I_a(\rho_t)dt<\infty   \},\\
			&\mathcal{S}_{a}=\{ \rho\in \PP\lt(\DD\lt(0,T;\R^2)\rt) \rt)  \, | \, \rho_.=\LL(\XX_.), \, (\XX_t)_{t\in[0,T]} \mbox{solution to \eqref{eq:NLSDE}, and } \, \int_0^T\int_{\R^2\times \R^2} |x-y|^{\e-a}\rho_t(x)\rho_t(y)dxdydt<\infty \},\\ &\quad \forall \e\in(0,a) .
		\end{split}
		\eq
	\end{itemize}
	
\end{theorem}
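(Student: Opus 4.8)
The plan is to run the tightness/martingale scheme of \cite{FJ,GQ,MSN}.

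\textbf{Tightness (point $(i)$).} By exchangeability and point $(ii)$ of Proposition \ref{prop:chaos}, it suffices to show tightness of the law of the single path $(X_t^1)_{t\in[0,T]}$ in $\DD(0,T;\R^2)$. Since $X_t^1=X_0^1+J_t^{N,1}+\ZZ_t^1$ with $X_0^1\sim\rho_0$ and $\ZZ^1$ a fixed $a$-stable flight, only $(J^{N,1})_N$ needs to be controlled, and \eqref{eq:cont}--\eqref{eq:ExpTi} reduce this to bounding $\sup_N\int_0^T\E[|X_u^1-X_u^2|^{-(\alpha-1)p}]\,du$ for some $p>1$ with $(\alpha-1)p<a$ (possible since $\alpha\le a$). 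This is exactly what Proposition \ref{prop:tiDD} gives when $a>\alpha$, and Proposition \ref{prop:tiFC} when $a=\alpha$ (using also $a>a^*$ and $\chi<\chi_a$); hence $(J^{N,1})_N$ is tight in $\mathcal{C}([0,T];\R^2)$ and $(i)$ follows.

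\textbf{Integrability of accumulation points.} Let $\mu^{N_k}\to\mu$ in law in $\PP(\DD(0,T;\R^2))$ and write $F_s^N=\LL(X_s^1,\dots,X_s^N)$. In the diffusion dominated case Proposition \ref{prop:tiDD} gives $\sup_N\int_0^T\II_a^N(F_s^N)\,ds<\infty$; for all but countably many $s$ the $k$-particle marginals $F_s^{N_k,k}$ converge weakly to the moment measures of $\LL(\mu_s)$, so the $\Gamma$-lower semicontinuity of Corollary \ref{lem:gam} yields $\E[I_a(\mu_s)]=\tilde\II_a(\LL(\mu_s))\le\liminf_k\II_a^{N_k}(F_s^{N_k})$, and integrating in $s$ with Fatou's lemma gives $\int_0^TI_a(\mu_s)\,ds<\infty$ a.s. In the fair competition case, $|x-y|^{\e-a}$ being nonnegative lower semicontinuous, from $F_s^{N_k,2}\rightharpoonup\E[\mu_s\otimes\mu_s]$ and Proposition \ref{prop:tiFC} the analogous Fatou argument gives, for the fixed $\e\in(0,1)$, $\E\big[\int_0^T\!\int_{\R^2\times\R^2}|x-y|^{\e-a}\mu_s(dx)\mu_s(dy)\,ds\big]<\infty$; since $|x-y|^{\e'-a}\le|x-y|^{\e-a}$ on $\{|x-y|\le1\}$ for $\e'\ge\e$ and $\le1$ on $\{|x-y|\ge1\}$, the bound extends to all $\e'\in(0,a)$. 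In either case $\mu_s$ is atomless for a.e.\ $s$, a.s.

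\textbf{Identification of the dynamics.} For $\phi\in\mathcal{C}^2_b(\R^2)$, $0\le s\le t\le T$ and $g$ bounded continuous in finitely many coordinates $(\gamma_{u_1},\dots,\gamma_{u_m})$ with $u_i\le s$, put
\[
G(\nu)=\int g(\gamma_{u_1},\dots,\gamma_{u_m})\Big(\phi(\gamma_t)-\phi(\gamma_s)-\int_s^t\big[\chi(K_\alpha*\nu_u)(\gamma_u)\cdot\nabla\phi(\gamma_u)+\mathcal{L}_a\phi(\gamma_u)\big]du\Big)\nu(d\gamma),
\]
with $\mathcal{L}_a=-(-\Delta)^{a/2}$ the generator of the $a$-stable flight and the convolution taken off the diagonal $\{x=y\}$. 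Itô's rule applied to $\phi(X_\cdot^i)$ (licit since $\int_0^T\E[|X_u^1-X_u^2|^{1-\alpha}]\,du<\infty$ by Propositions \ref{prop:tiDD}--\ref{prop:tiFC}) gives $G(\mu^N)=\frac1N\sum_ig(X_{u_1}^i,\dots,X_{u_m}^i)(M_t^i-M_s^i)$ with $M_\cdot^i=\int(\phi(X_{u^-}^i+z)-\phi(X_{u^-}^i))\bar M_i(du,dz)$; the independence of the $M_i$ yields $[M^i,M^j]=0$ for $i\ne j$ and $\E[\langle M^i\rangle_t]\le C\|\phi\|_{\mathcal{C}^2_b}^2t$, whence $\E[|G(\mu^N)|^2]\le C(t-s)\|g\|_\infty^2/N\to0$. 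To pass to the limit, replace $K_\alpha$ by a bounded continuous $K_\alpha^R$ equal to $K_\alpha$ on $\{|x|\ge1/R\}$ and let $G^R$ be the corresponding functional, which is bounded and continuous on $\PP(\DD(0,T;\R^2))$ (take $s,t$ continuity points). Then
\[
\E[|G^R(\mu)|]=\lim_k\E[|G^R(\mu^{N_k})|]\le\lim_k\E[|G(\mu^{N_k})|]+\sup_N\E[|G^R(\mu^N)-G(\mu^N)|],
\]
and since $|K_\alpha-K_\alpha^R|(z)\le2|z|^{1-\alpha}\mathbf{1}_{|z|\le1/R}\le2R^{-\delta}|z|^{-\gamma}$ for a suitable $\gamma<a$ and $\delta>0$ ($\gamma=(\alpha-1)p$ if $a>\alpha$, $\gamma=a-\e$ if $a=\alpha$), Propositions \ref{prop:tiDD}--\ref{prop:tiFC} bound $\sup_N\E[|G^R(\mu^N)-G(\mu^N)|]\le CR^{-\delta}$, so $\E[|G^R(\mu)|]\le CR^{-\delta}$; finally $\E[|G(\mu)-G^R(\mu)|]\le CR^{-\delta}\,\E[\int_0^T\!\int|x-y|^{-\gamma}\mu_u(dx)\mu_u(dy)\,du]\to0$ thanks to the integrability established above (and Proposition \ref{prop:tiDD} in the diffusion dominated case) together with atomlessness of $\mu_u$. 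Letting $R\to\infty$ gives $G(\mu)=0$ a.s.; running over a countable family of $(\phi,s,t,g)$ and using right-continuity of paths, a.s.\ under $\mu$ the canonical process solves \eqref{eq:NLSDE} with $\rho_u=\mu_u$, so with the integrability above $\mu\in\mathcal{S}_\alpha^a$ (resp.\ $\mathcal{S}_a$) a.s., as in \eqref{eq:deflimset}.

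\textbf{Main obstacle.} The delicate point is the passage to the limit in the singular drift: $G$ is not continuous on $\PP(\DD(0,T;\R^2))$ because of the $|x|^{1-\alpha}$ singularity of $K_\alpha$, and the truncation error must be absorbed uniformly in $N$, which is exactly the role of the moment estimates of Propositions \ref{prop:tiDD} and \ref{prop:tiFC} (and, in the fair competition case, of the smallness restriction on $\chi$). The secondary difficulty, transferring the Fisher-information bound from the regular laws $F_s^N$ to the a priori singular empirical measures, is handled by the three-level/$\Gamma$-lower semicontinuity machinery of Corollary \ref{lem:gam}.
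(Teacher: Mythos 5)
Your proposal is correct and follows essentially the same route as the paper: tightness via Proposition \ref{prop:chaos}$(ii)$ and the decomposition $X^1=X_0^1+J^{N,1}+\ZZ^1$ controlled by Propositions \ref{prop:tiDD}--\ref{prop:tiFC}, transfer of the integrability to the limit via Corollary \ref{lem:gam} (resp.\ a truncation/monotone-convergence argument for $|x-y|^{\e-a}$), and identification through the same martingale functional with an $O(1/N)$ second-moment bound and a kernel truncation whose error is absorbed uniformly in $N$ by the same moment estimates. The only differences are presentational (lower semicontinuity plus Fatou in place of the explicit $m\wedge\cdot$ truncation, and the explicit remark extending the bound to all $\e'\in(0,a)$), so no further comparison is needed.
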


\begin{proof}
	
	Proof of $(i)$ \newline
	First going back to \eqref{eq:ExpTi}, we deduce from Propositions \ref{prop:tiDD} and \ref{prop:tiFC} and a very standard argument (see \cite[Lemma 11]{FJ}, \cite[Lemma 5.2]{MSN} or \cite[Lemma 4.1]{GQ}) that  $\lt((X_0+J_t^{N,1})_{t\in[0,T]}\rt)_{N\geq 1}$ is tight in $\CC(0,T;\R^2)$.
	Hence $\lt((X_0+J_t^{N,1}+\ZZ_t^1)_{t\in[0,T]}\rt)_{N\geq 1}$ is tight in $\DD(0,T;\R^2)$. But using point $(ii)$ of Proposition \ref{prop:chaos} concludes the proof of point $(i)$ since $\DD([0,T],\R^2)$ is Polish (see \cite[Theorem 12.2]{Bil}). \newline	
	Proof of point $(ii)$ \newline
	Due to point $(i)$, we know that there is a subsequence of $(\mu_{.}^N)_{N\geq 1}$ (for which we will use the same notation for the sake of notational simplicity) going in law to some $\mu_.\in \PP\lt(\DD(0,T;\R^2)\rt)$. We now define the martingale problem of unknown $\mathcal{Q}\in \PP\lt(\DD(0,T;\PP(\R^2))\rt)$
	\bq
	\label{eq:mart}
	\left\{ \begin{array}{ll}
		\displaystyle (a) \quad \mathbf{e}_0\#\mathcal{Q}=\rho_0,&\\[2mm]
		\displaystyle(b)\quad  \mathcal{Q}_t:=\mathbf{e}_t\#\mathcal{Q}, (\mathcal{Q}_t)_{t\in[0,T]} \quad \text{satisfies \eqref{eq:deflimset}},&\\[2mm]
		\displaystyle (c) \quad \forall 0<t_1<\cdots<t_k<s<t\leq T, \phi_1,\cdots,\phi_k\in C_b(\R^2), \phi\in C^2_b(\R^2),\quad  \FF(\mathcal{Q})=0,  \mbox{with}&\\[2mm]
		\displaystyle \quad \quad \FF(Q):=&\\[2mm]
		\int \int \prod_{k=1}^N\phi_k(\gamma_{t_k}) \lt( \phi(\gamma_t)-\phi(\gamma_s)-\chi\int_s^tK_{\alpha}(\gamma_u-\tilde{\gamma}_u)  \cdot\nabla \phi(\gamma_u)-\mbox{v.p.}\int_{ \R^2 }\frac{\phi(\gamma_u+z)-\phi(\gamma_u)- z\cdot \nabla \phi(\gamma_u) }{|z|^{2+a}}dz \,du    \rt)Q(d\gamma)Q(d\tilde{\gamma}).
	\end{array} \right.
	\eq
	We now show that the limiting point $\mu_.\in \PP\lt(\DD(0,T;\R^2)\rt)$ solves this martingale problem, and divide it in 3 steps. \newline
	$\diamond$ Step $1$ \newline
	It is straightforward that in both cases $a=\alpha$ or $a>\alpha$, $\mu_.$ satisfies point $(a)$, due to the fact $F_0^N$ is $\rho_0$-chaotic. \newline
	$\diamond$ Step $2$\newline
	$\bullet$ Step $2.a$\newline 
	In the case $a=\alpha$ we use the techniques of \cite[\textit{Proof of Theorem 6, step 2.3}]{FJ} and introduce $m>0$. Due to Proposition \ref{prop:tiFC} we get
	\[
	\begin{split}
	\E\lt[ \int_0^T \int_{ \R^2\times \R^2 } (m \wedge |x-y|^{\e-a}) \mu_s^N(dx)\mu_s^N(dy)ds \ \rt]&=\frac{1}{N^2}\sum_{i,j}\E\lt[m \wedge |X_s^i-X_s^j|^{\e-a}\,ds\rt]\\
	&\quad\leq\frac{Tm}{N}+\frac{1}{N^2}\sum_{i\neq j} \int_0^T\E\lt[ |X_s^i-X_s^j|^{\e-a} \rt]\,ds\\
	&\quad \leq \frac{Tm}{N}+C_{\e,T}.
	\end{split}
	\] 
	Letting $N$ go to infinity, we find that the l.h.s. converges to $\E\lt[ \int_0^T \int_{ \R^2\times \R^2 } (m \wedge |x-y|^{\e-a}) \mu_s(dx)\mu_s(dy)\,ds \ \rt]$ since $\mu_.^N$ goes in law to $\mu_.$. Letting then $m$ go to infinity yields, thanks to the monotone convergence Theorem
	\[
	\E\lt[ \int_0^T \int_{ \R^2\times \R^2 }  |x-y|^{\e-a} \mu_s(dx)\mu_s(dy)ds  \rt]\,ds\leq C_{\e,T},
	\]
	and therefore
	\[
	\int_0^T \int_{ \R^2\times \R^2 }  |x-y|^{\e-a} \mu_s(dx)\mu_s(dy)ds<\infty, \, \mbox{a.s.}.
	\]. \newline
	$\bullet$ Step $2.b$ \newline	
	In case $a>\alpha$ we use the techniques of \cite[Proof of Proposition 6.1, Step 2]{MSN}. Denote $\pi_t=\LL(\mu_t)\in \PP_{\kappa}(\PP(\R^2))$, $\pi^j_t=\int_{  \PP(\R^2) } g^{\otimes j} \pi_t(dg)$ its Hewwit and Savage projection, and $F^{N,j}_t$ the marginal of $F_t^N$ on $\R^{2j}$ (recall that $F_t^N=\LL(X_t^1,\cdots,X_t^N)\in \PP_{\kappa}(\R^{2N})$). It is classical (see \cite{Szn}) to deduce from point $(i)$ that $F^{N,j}_t$ goes weakly to $\pi^j_t$ as $N$ goes to infinity. Hence, using Corollary \ref{lem:gam}, Fatou's Lemma and Proposition \ref{prop:tiDD} we find
	\[
	\begin{split}
	\E\lt[\int_0^T I_a(\mu_t)  \,dt\rt]&=\int_0^T \int_{  \PP(\R^2) } I_a(g)\pi_t(dg)dt \leq \int_0^T \liminf_{N\rightarrow +\infty} \II_a^N(F_t^N)dt\\
	&\quad \leq \liminf_{N\rightarrow +\infty} \int_0^T  \II_a^N(F_t^N)dt \leq 2\liminf_{N\rightarrow +\infty}\HH^{N}(F^N_0)+CT\\
	&\quad =2\liminf_{N\rightarrow +\infty}\HH^{N}(\rho_0^{\otimes N})+CT=2\int_{\R^2}\rho_0\ln \rho_0+CT,
	\end{split}
	\]
	and therefore
	\[
	\int_0^T I_a(\mu_t)  \,dt<\infty, \, \mbox{a.s.}
	\]
	$\diamond$ Step 3\newline
	Regardless of $a=\alpha$ or $a>\alpha$, define
	\[
	\begin{split}
	O_i(t)&:=\phi(X_t^i)-\int_0^t\frac{\chi}{N}\sum_{j\neq i}^N \nabla \phi(X_s^i)\cdot K_{\alpha}(X_s^i-X_s^j)ds\\
	&\quad-\int_{0}^t c_{2,a}\int_{\R^2 }\frac{\phi(X_{s}^i+z)-\phi(X_{s}^i)-z\cdot \nabla \phi(X_{s}^i) }{|z|^{2+a}}dzds \\
	&\quad= \phi(X_0^i)+\int_{[0,t]\times \mathbb{\R}^2}\lt(\phi(X_{s_-}^i+z)-\phi(X_{s_-}^i)\rt)\bar{M}_i(ds,dz).
	\end{split} 
	\]
	Note that $(O_t^i)_{t\in [0,T]}$ is a martingale with respect to the natural filtration of $\FF_t=\sigma(\XX_r^N,\ZZ_r^N)_{r\geq t}$. Hence since the $(M_i)_{i=1,\cdots,N}$ are independent, we have,
	\bq
	\label{eq:martdec}
	\begin{split}
		\E\lt[\FF^2(\mu_{.}^N)\rt]=&\E\lt[ \lt(\frac{1}{N}\sum_{i=1}^{N} \lt( \prod_{l=1}^k\phi_l(X^i_{t_l})\rt) \lt(O_i(t)-O_i(s)\rt) \rt)^2\rt]\\
		&\quad =\E\lt[\E\lt[ \frac{1}{N^2}\sum_{i,j}\prod_{l=1}^k\phi_l(X^i_{t_l})\prod_{l=1}^k\phi_l(X^j_{t_l})  \lt(O_t^i-O_s^i\rt)\lt(O_t^j-O_s^j\rt)\,| \, \FF_s\rt]\rt]\\
		&\quad =\frac{1}{N^2}\sum_{i=1}^N\E\lt[ \prod_{l=1}^k\phi_l(X^i_{t_l})^2 \rt]\E\lt[\lt( \int_{[s,t]\times \mathbb{\R}^2}\lt(\phi(X_{u_-}^i+z)-\phi(X_{u_-}^i)\rt)\bar{M}_i(du,dz)\rt)^2\rt]\leq \frac{C_{\FF}}{N},
	\end{split}
	\eq
	since (see for instance \cite{Ebe})
	\[
	\begin{split}
	\E\lt[\lt(\int_{[s,t[\times \R^2}\lt(\phi(X_{u_-}^i+z)-\phi(X_{u_-}^i)\rt)\bar{M}_i(du,dz)\rt)^2\rt]&=\int_s^t \E\lt[c_{2,a}\int_{  \R^2 }\frac{\lt(\phi(X_{u_-}^i+z)-\phi(X_{u_-}^i)\rt)^2}{|z|^{2+a}}dz\rt]du\\
	&\quad \leq c_{2,a}(t-s)\lt(\|\nabla \phi\|_{L^\infty} ^2 \int_{\mathbb{B}}|z|^{-a}dz + 4\|\phi\|^2_{L^\infty}\int_{  \mathbb{B}^c}|z|^{-(2+a)}dz   \rt).
	\end{split}	
	\]
	After which it is classical to deduce that $\mu_.$ satisfies $c)$ (see \cite[\textit{Proof of Theorem 6, Step 2.3.2-3-4}]{FJ}). Indeed for $\eta>0$ define $K_{\eta,\alpha}$ as
	\[
	K_{\eta,\alpha}(x)=-\frac{x}{\max(|x|^\alpha,\eta)^\alpha},
	\]
	and define $\FF_{\eta}$ as in \eqref{eq:mart} $c)$ with $K_\alpha$ replaced with $K_{\alpha,\eta}$. Note that with the non self interaction condition it holds for any $x\in \R^2$ and $\e\in (0,1)$
	\[
	\lt|K_{\alpha}(x)-K_{\alpha,\eta}(x) \rt|\leq \mb_{|x|\leq \eta}|x|^{1-\alpha}	\leq \eta^{1-\e} |x|^{\e-a}.
	\]
	This deduces that for any $Q\in\PP(\DD(0,T;\R^2))$
	\bq
	\label{eq:regfunc}
	\begin{split}
		\lt|\FF(Q)-\FF_\e(Q) \rt|&\leq \eta^{1-\e} \int \int \int_0^t |\gamma_u-\tilde{\gamma}_u|^{\e-a}Q(d\gamma)Q(d\tilde{\gamma})\\
		&\quad \leq  \eta^{1-\e}  \int_0^t\int \int  |z-\tilde{z}|^{\e-a}Q_u(dz)Q_u(d\tilde{z})\,du.
	\end{split}
	\eq
	Hence this deduces
	\[
	\begin{split}
	\E\lt[\lt|\FF(\mu_.)\rt|\rt]&\leq \E\lt[\lt|\FF(\mu_.)-\FF_{\eta}(\mu_.)\rt|\rt]+\limsup_{N} \lt|\E\lt[\lt|\FF_{\eta}(\mu_.)\rt|\rt]-\E\lt[\lt|\FF_{\eta}(\mu^N_.)\rt|\rt]\rt|\\
	&\quad + \limsup_{N} \E\lt[\lt|\FF_{\eta}(\mu^N_.)-\FF(\mu^N_.)\rt|\rt]+\limsup_{N} \E\lt[\lt|\FF(\mu^N_.)\rt|\rt]\\
	&\quad := I_1+I_2+I_3+I_4.
	\end{split}
	\]
	$I_2$ goes to $0$ when $N$ goes to infinity and $\eta$ is fixed since $\FF_{\eta}$ is a smooth function on $\DD(0,T;\R^2)$ and $\mu_.^N$ goes in law to $\mu_.$. $I_4$ goes to $0$ when $N$ goes to infinity due to \eqref{eq:martdec}. $I_1$ and $I_3$ go to $0$ as $\eta$ goes to $0$ due to \eqref{eq:regfunc} and respectively Step 2 of this proof and Proposition \ref{prop:tiDD} or Proposition \ref{prop:tiFC} depending on whether $a=\alpha$ or $a>\alpha$. Finally we deduce
	\[
	\E\lt[\lt|\FF(\mu_.)\rt|\rt]=0,
	\]
	which implies $\FF(\mu_.)=0$ a.s. and concludes the proof.
	
\end{proof}

%
%
%
%
\section{Uniqueness of the limit equation}
Now, in order to complete the propagation of chaos result, we need to investigate the uniqueness of the accumulation points of the sequence of the law of solution to equation \eqref{eq:part_fr_KS}, which have been proved to be tight in both case $a>\alpha$ and $a=\alpha$ in the previous section. However this uniqueness can not be obtained in the Fair Competition case in the class where lie the accumulation points. However in the Diffusion Dominated we are able to conclude to the well posedness of equation \eqref{eq:Fr_KS} for an initial condition in $L\log L(\R^2)$.

From Lemmma \ref{lem:GNS}, we know that any solution to \eqref{eq:Fr_KS} starting from an $L\log L$ initial condition is $L^1(0,T;L^p(\R^2))$ for any $p\in \lt(1, \frac{2}{2-a}\rt)$. It is now to prove the uniqueness of solution to equation \eqref{eq:Fr_KS} in this class. Note that this is known in the case $a=2$ and $\alpha\in(1,2)$ (see \cite{GQ}). Precisely we have strong-strong stability estimate 
	
\begin{lemma}
	\label{lem:log_lip}
	Let be $p>\frac{2}{2-\alpha},q\geq 1$. There is a constant $C_{\alpha,q,p}>0$ such that for any $X,Y$ random variables of respective laws $\rho_1,\rho_2$ with $\rho_1,\rho_2\in L^{p}(\R^2)\cap \PP_q(\R^2)$. Then it holds
	\[
	\E\lt[\lt|\int_{\R^2} K_{\alpha}(X-y)\rho_1(dy)-\int_{\R^2} K_{\alpha}(Y-y)\rho_2(dy)\rt||X-Y|^{q-1}\rt]\leq C_{\alpha,q,p}\lt(2+\|\rho_1\|_{ L^p}+\|\rho_2\|_{L^p}\rt)\E[|X-Y|^q].
	\]
	
\end{lemma}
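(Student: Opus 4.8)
The plan is to linearise the two convolutions probabilistically and then to combine a sharp pointwise bound on $K_\alpha$ with a Hölder estimate on the singular moments of $\rho_1,\rho_2$. Write $D:=\int_{\R^2}K_{\alpha}(X-y)\rho_1(dy)-\int_{\R^2}K_{\alpha}(Y-y)\rho_2(dy)$ for the bracketed quantity, and note that since $\rho_1,\rho_2\in L^p(\R^2)$ both laws are absolutely continuous and, by the Hölder bound below, $K_\alpha*\rho_1$ and $K_\alpha*\rho_2$ are bounded, so $D$ is well defined everywhere. Introduce $(\bar X,\bar Y)$ an independent copy of $(X,Y)$: thus $\bar X\sim\rho_1$, $\bar Y\sim\rho_2$, $(\bar X,\bar Y)$ is independent of $(X,Y)$ and $(\bar X,\bar Y)\overset{(\LL)}{=}(X,Y)$. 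Then $\int K_{\alpha}(X-y)\rho_1(dy)=\E[K_\alpha(X-\bar X)\mid X,Y]$ and $\int K_{\alpha}(Y-y)\rho_2(dy)=\E[K_\alpha(Y-\bar Y)\mid X,Y]$, so that by the conditional Jensen inequality
\[
|D|\leq \E\lt[\,\big|K_\alpha(X-\bar X)-K_\alpha(Y-\bar Y)\big| \mid X,Y\,\rt]\qquad\text{a.s.}
\]

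Next I would record the elementary pointwise estimate: for all $a,b\in\R^2\setminus\{0\}$,
\[
\big|K_\alpha(a)-K_\alpha(b)\big|\leq (1+\alpha)\,|a-b|\lt(\frac{1}{|a|^{\alpha}}+\frac{1}{|b|^{\alpha}}\rt).
\]
This follows from the splitting $K_\alpha(a)-K_\alpha(b)=-\dfrac{a-b}{|a|^{\alpha}}-b\lt(\dfrac{1}{|a|^{\alpha}}-\dfrac{1}{|b|^{\alpha}}\rt)$ together with the inequality $\big||a|^{\alpha}-|b|^{\alpha}\big|\leq \alpha\,\max(|a|,|b|)^{\alpha-1}\,|a-b|$. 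Note that a direct mean value estimate along the segment $[a,b]$ is not available: for $\alpha\geq 1$ the integral of $|\nabla K_\alpha|$ along a segment crossing the origin diverges, so the algebraic cancellation in the identity above is essential — this is the main obstacle in the proof. Applying this with $a=X-\bar X$, $b=Y-\bar Y$ (which are a.s.\ nonzero since the laws are absolutely continuous), using $|a-b|=|(X-Y)-(\bar X-\bar Y)|\leq |X-Y|+|\bar X-\bar Y|$, multiplying by $|X-Y|^{q-1}$ and using the Young inequality $|\bar X-\bar Y|\,|X-Y|^{q-1}\leq |\bar X-\bar Y|^{q}+|X-Y|^{q}$, one gets after taking the full expectation
\[
\E\lt[|D|\,|X-Y|^{q-1}\rt]\leq C_{\alpha,q}\ \E\lt[\lt(|X-Y|^{q}+|\bar X-\bar Y|^{q}\rt)\lt(\frac{1}{|X-\bar X|^{\alpha}}+\frac{1}{|Y-\bar Y|^{\alpha}}\rt)\rt].
\]

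The crucial ingredient is then the uniform bound: for every $z\in\R^2$ and every $\rho\in L^p(\R^2)\cap\PP(\R^2)$,
\[
\int_{\R^2}|z-x|^{-\alpha}\rho(dx)\leq 1+\int_{|z-x|\leq 1}|z-x|^{-\alpha}\rho(dx)\leq 1+\lt\||\cdot|^{-\alpha}\rt\|_{L^{p'}(\B)}\,\|\rho\|_{L^{p}}\leq C_{\alpha,p}\lt(1+\|\rho\|_{L^{p}}\rt),
\]
the middle integral being finite precisely because $\alpha p'<2$, i.e.\ $p>\tfrac{2}{2-\alpha}$. I would then bound the four terms in the previous display separately: conditioning on $(X,Y)$ in $\E[|X-Y|^{q}|X-\bar X|^{-\alpha}]$ gives $\E\big[|X-Y|^{q}\int|X-x|^{-\alpha}\rho_1(dx)\big]\leq C_{\alpha,p}(1+\|\rho_1\|_{L^p})\,\E[|X-Y|^{q}]$; conditioning instead on $(\bar X,\bar Y)$ in $\E[|\bar X-\bar Y|^{q}|X-\bar X|^{-\alpha}]$ gives $C_{\alpha,p}(1+\|\rho_1\|_{L^p})\,\E[|\bar X-\bar Y|^{q}]=C_{\alpha,p}(1+\|\rho_1\|_{L^p})\,\E[|X-Y|^{q}]$ since $(\bar X,\bar Y)\overset{(\LL)}{=}(X,Y)$; and the two terms carrying $|Y-\bar Y|^{-\alpha}$ are treated identically and produce the factor $(1+\|\rho_2\|_{L^p})$. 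Adding the four contributions yields exactly the claimed bound, with the factor $2+\|\rho_1\|_{L^p}+\|\rho_2\|_{L^p}$.

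Apart from the pointwise kernel estimate near the origin, which must be obtained by the algebraic cancellation above rather than by a naive mean value argument, the rest is only Tonelli, Young and Hölder, the threshold $p>\tfrac{2}{2-\alpha}$ being exactly what makes $|\cdot|^{-\alpha}$ locally in $L^{p'}(\R^2)$ in dimension $2$. The hypothesis $\rho_1,\rho_2\in\PP_q$ is used only to guarantee that the right-hand side $\E[|X-Y|^q]$ is finite; the inequality itself holds unconditionally, all integrands being nonnegative.
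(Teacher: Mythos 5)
Your proof is correct and follows essentially the same route as the paper's: introduce the independent copy $(\bar X,\bar Y)$, use the pointwise bound $|K_\alpha(a)-K_\alpha(b)|\leq C_\alpha |a-b|\bigl(|a|^{-\alpha}+|b|^{-\alpha}\bigr)$, and control $\sup_z\int |z-x|^{-\alpha}\rho(dx)$ by H\"older using $\alpha p'<2$. The only (harmless) deviations are that you justify the kernel estimate by the explicit splitting $K_\alpha(a)-K_\alpha(b)=-\tfrac{a-b}{|a|^\alpha}-b\bigl(\tfrac{1}{|a|^\alpha}-\tfrac{1}{|b|^\alpha}\bigr)$ rather than by the gradient formula the paper invokes, and that you dispose of the cross term $|\bar X-\bar Y|\,|X-Y|^{q-1}$ with Young's inequality where the paper uses H\"older plus the exchange symmetry $(X,Y)\leftrightarrow(\bar X,\bar Y)$.
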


\begin{proof} First, we notice that
	\[
	\nabla K_{\alpha}(x)=\lt(I_2-\alpha \frac{x\otimes x}{|x|^2}\rt)|x|^{-\alpha},
	\]
	hence, denoting $\bar{X},\bar{Y}$ some independent copies of $X,Y$, we find
	\[
	|K_{\alpha}(X-\overline{X})-K_{\alpha}(Y-\overline{Y})| \leq C_{\alpha} \lt(|X-Y|+|\overline{X}-\overline{Y}|\rt)\lt(\frac{1}{|X-\overline{X}|^\alpha}+\frac{1}{|Y-\overline{Y}|^\alpha}\rt).
	\]
	This yields to
	$$\begin{aligned}
	\E\lt[|K_{\alpha}(X-\overline{X})-K_{\alpha}(Y-\overline{Y})||X-Y|^{q-1} \rt]&\leq C\E\lt[\lt(\frac{1}{|X-\overline{X}|^{\alpha}}+\frac{1}{|Y-\overline{Y}|^{\alpha}}\rt)|X-Y|^{q}\rt]\\
	& +C\E\lt[\lt(\frac{1}{|X-\overline{X}|^{\alpha}}+\frac{1}{|Y-\overline{Y}|^{\alpha}}\rt)|\overline{X}-\overline{Y}||X-Y|^{q-1}\rt]\\
	&=: C(I_1+I_2).
	\end{aligned}$$
	$\diamond$ Estimate of $I_1$: 	First we easily get for this term, by taking firstly the expectation on $(\overline{X},\overline{Y})$ 
	$$
	\begin{aligned}
	I_1=&\E_{X,Y}\lt[|X-Y|^q\E_{\overline{X},\overline{Y}}\lt[\lt(\frac{1}{|X-\overline{X}|^{\alpha}}+\frac{1}{|Y-\overline{Y}|^{\alpha}}\rt)\rt ]\rt]\\
	& \quad \leq  C_{\alpha,p}\lt(\|\rho_1\|_{L^p(\R^2)}+\|\rho_2\|_{L^p(\R^2)}+\|\rho_1\|_{L^1(\R^2)}+\|\rho_2\|_{L^1(\R^2)}\rt)\E\lt[|X-Y|^q\rt].
	\end{aligned}
	$$
	
	$\diamond$ Estimate of $I_2$: 
	We use Holder's inequality to find
	$$\begin{aligned}
	I_2&\leq C\E\lt[|\overline{X}-\overline{Y}|\lt(\frac{1}{|X-\overline{X}|^{\alpha}}+\frac{1}{|Y-\overline{Y}|^{\alpha}}\rt)^{1/q}\lt(\lt(\frac{1}{|X-\overline{X}|^{\alpha}}+\frac{1}{|Y-\overline{Y}|^{\alpha}}\rt)^{1/q}|X-Y|\rt)^{q-1}\rt]\\
	& \leq C\E\lt[|\overline{X}-\overline{Y}|^q\lt(\frac{1}{|X-\overline{X}|^{\alpha}}+\frac{1}{|Y-\overline{Y}|^{\alpha}}\rt)\rt]^{1/q}\E\lt[\lt(\frac{1}{|X-\overline{X}|^{\alpha}}+\frac{1}{|Y-\overline{Y}|^{\alpha}}\rt)|X-Y|^{q}\rt]^{(q-1)/q}\\
	&=C\E\lt[|\overline{X}-\overline{Y}|^q\lt(\frac{1}{|X-\overline{X}|^{\alpha}}+\frac{1}{|Y-\overline{Y}|^{\alpha}}\rt)\rt].
	\end{aligned}$$
	Now taking first the expectation w.r.t. $(X,Y)$ and then w.r.t. $(\overline{X},\overline{Y})$ yields similarly as above
	$$I_2\leq C_{\alpha,q}\lt(\|\rho_1\|_{L^1\cap L^p(\R^2)}+\|\rho_2\|_{L^1\cap L^p(\R^2)}\rt) \E\lt[ |X-Y|^q\rt].$$
	Hence putting all those estimates together leads to the desired result.

\end{proof}

Then we can obtained the desired stability estimate stated in the
\begin{proposition}
	\label{prop:stab}
	Let be $T>0,q\in (1,a)$, and $(\XX^1_t)_{t \in [0,T]}$ and $(\XX^2_t)_{t \in [0,T]}$ two solutions to equation \eqref{eq:NLSDE} build with the same L\'evy process, and assume their respective laws $\rho_.^1,\rho_.^2\in L^1(0,T;L^p(\R^2))\cap \PP\lt(\DD(0,T;(\R^2))\rt)$ for some $p>\frac{2}{2-\alpha}$. Then it holds
	\[
	\E\lt[|\XX_t^1-\XX_t^2|^q \rt]\leq \E\lt[|\XX_0^1-\XX_0^2|^q \rt]e^{2t+\int_0^t\lt( \|\rho_s^1\|_{L^p}+ \|\rho_s^1\|_{L^p}  \rt)\,ds }.
	\] 
\end{proposition}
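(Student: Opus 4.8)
The plan is to exploit that $\XX^1$ and $\XX^2$ are built from the \emph{same} L\'evy process $\ZZ$, so that in the difference $D_t:=\XX^1_t-\XX^2_t$ the noise cancels \emph{exactly}: writing $b^i_s:=\int_{\R^2}K_{\alpha}(\XX^i_s-y)\rho^i_s(dy)$, one has
\[
D_t=D_0+\chi\int_0^t\bigl(b^1_s-b^2_s\bigr)\,ds ,
\]
and since the jumps $\Delta\ZZ_t-\Delta\ZZ_t$ of $D$ vanish, $D$ is a \emph{continuous}, finite variation process. The first step is to check that $s\mapsto\E\bigl[|b^1_s-b^2_s|\bigr]$ is locally integrable on $[0,T]$: because $\rho^i\in L^1(0,T;L^p)$ with $p>\tfrac{2}{2-\alpha}$, a standard splitting of the kernel into $|z|\le 1$ and $|z|>1$ together with H\"older's and Young's inequalities on the singular part (the condition on $p$ being exactly what makes $\int_{|z|\le1}|z|^{(1-\alpha)p'}\,dz<\infty$) gives $\E[|b^i_s|]\le C(1+\|\rho^i_s\|_{L^p})$. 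This both makes $D$ genuinely absolutely continuous in $t$ (a.s.) and licenses the use of Fubini below.

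Since $q>1$, the map $x\mapsto|x|^q$ is of class $\mathcal{C}^1(\R^2)$ (its gradient $q|x|^{q-2}x$ is continuous up to the origin because $q-1>0$), so the ordinary chain rule along the absolutely continuous path $D$ applies:
\[
|D_t|^q=|D_0|^q+q\chi\int_0^t|D_s|^{q-2}D_s\cdot\bigl(b^1_s-b^2_s\bigr)\,ds .
\]
Taking expectations, applying Fubini, and bounding $|D_s|^{q-2}D_s\cdot(b^1_s-b^2_s)\le|D_s|^{q-1}|b^1_s-b^2_s|$, I would then invoke Lemma \ref{lem:log_lip} with $X=\XX^1_s$, $Y=\XX^2_s$, $\rho_1=\rho^1_s$, $\rho_2=\rho^2_s$, which yields
\[
\E\bigl[|D_s|^{q-1}|b^1_s-b^2_s|\bigr]\le C_{\alpha,q,p}\bigl(2+\|\rho^1_s\|_{L^p}+\|\rho^2_s\|_{L^p}\bigr)\E\bigl[|D_s|^q\bigr].
\]
Hence $g(t):=\E[|D_t|^q]$ satisfies the integral inequality $g(t)\le g(0)+\int_0^t C\,(2+\|\rho^1_s\|_{L^p}+\|\rho^2_s\|_{L^p})\,g(s)\,ds$, and Gr\"onwall's lemma gives the announced exponential bound, the multiplicative constant $C_{\alpha,q,p}\chi q$ being absorbed into the exponent as in the statement.

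Two points require a little care. The genuine structural input is the combination of the noise cancellation with the $\mathcal{C}^1$ regularity of $|\cdot|^q$ (valid precisely because $q>1$): together they collapse what would \emph{a priori} be an It\^o computation carrying a fractional-Laplacian correction term --- a term which for $|\cdot|^q$ is singular near the diagonal and would be hopeless to control --- into an innocuous finite variation chain rule, which is why one obtains a \emph{strong--strong} estimate. The second point is a soft integrability matter: \emph{a priori} $\E[|D_t|^q]$ might be infinite, so strictly one should run the argument with $|x|^q$ replaced by $(\eta^2+|x|^2)^{q/2}$ and let $\eta\downarrow 0$ at the end, or first note (as in Lemma \ref{lem:mom}, now applied to \eqref{eq:NLSDE}, which is why one restricts to $q\in(1,a)$) that $\rho^i_s$ has finite $q$-th moment uniformly on $[0,T]$; either way this is routine. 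I expect the only mildly delicate bookkeeping to be making precise that the difference of two solutions of the L\'evy-driven SDE carries neither a martingale part nor a jump correction, so that the elementary chain rule is legitimate.
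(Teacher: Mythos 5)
Your proposal is correct and follows essentially the same route as the paper: cancel the noise by using the same L\'evy process, apply the (finite-variation) chain rule to $|\XX^1_t-\XX^2_t|^q$, bound the resulting drift term via Lemma \ref{lem:log_lip}, and conclude with Gr\"onwall. The paper's own proof is just a three-line version of this; your additional remarks on the $\mathcal{C}^1$ regularity of $|\cdot|^q$ for $q>1$ and on the a priori integrability are sensible elaborations of points the paper leaves implicit.
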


\begin{proof}
Since the two solutions are build on the same L\'evy process, we get
\[
\begin{split}
|\XX_t^1-\XX_t^2|^q &=|\XX_0^1-\XX_0^2|^q+q\int_0^t |\XX_s^1-\XX_s^2|^{q-1} \lt| \int_{ \R^{2} }K_{\alpha}(\XX^1_s-y)\rho^1_s(dy)-\int_{ \R^{2} }K_{\alpha}(\XX^2_s-y)\rho^2_s(dy)   \rt|\,ds.
\end{split}
\]
Taking the expectation, using Lemma \ref{lem:log_lip} and Gronwall's inequality yields the desired result.

\end{proof}

\begin{corollary}
	\label{cor:uni}
	When $a>\alpha$, the set $S_{\alpha}^a$ defined in \eqref{eq:deflimset} is a singleton.
\end{corollary}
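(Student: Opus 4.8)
The plan is to derive the corollary by pairing the consistency statement of Theorem~\ref{thm:conv}, which makes $\mathcal{S}_\alpha^a$ nonempty, with the strong--strong stability estimate of Proposition~\ref{prop:stab}, which forces any two of its elements to coincide. (Throughout, $\rho_0\in L\log L(\R^2)\cap\PP_\kappa(\R^2)$ with $\kappa\in(1,a)$ is the fixed initial datum, so elements of $\mathcal{S}_\alpha^a$ are implicitly taken with initial marginal $\rho_0$.) For nonemptiness one simply notes that, by Theorem~\ref{thm:conv}, the empirical measures of the particle system \eqref{eq:part_fr_KS} are tight and each accumulation point lies almost surely in $\mathcal{S}_\alpha^a$, so $\mathcal{S}_\alpha^a\neq\emptyset$. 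The real content is uniqueness, and the bridge to Proposition~\ref{prop:stab} is that the only qualitative information retained in the definition \eqref{eq:deflimset}, namely $\int_0^T I_a(\rho_t)\,dt<\infty$, already forces $\rho$ into the class $L^1(0,T;L^p(\R^2))$ on which Proposition~\ref{prop:stab} operates; this is exactly where $a>\alpha$ enters.

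First I would carry out this regularity upgrade. Since $a>\alpha$ we have $\tfrac{2}{2-\alpha}<\tfrac{2}{2-a}$, so we may fix $p\in\bigl(\tfrac{2}{2-\alpha},\tfrac{2}{2-a}\bigr]$. For such $p$, Lemma~\ref{lem:GNS} (with $d=2$) gives $\|\rho_t\|_{L^p}\le C_{p,a}\,I_a(\rho_t)^{\theta}$ with $\theta=\theta(p,a)\in(0,1]$, and Hölder in time then yields
\[
\int_0^T\|\rho_t\|_{L^p}\,dt\le C_{p,a}\,T^{1-\theta}\Bigl(\int_0^T I_a(\rho_t)\,dt\Bigr)^{\theta}<\infty
\]
for every $\rho\in\mathcal{S}_\alpha^a$, so $\rho\in L^1(0,T;L^p(\R^2))$. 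I would also record that, by the same Itô computation with $m_\kappa$ as in the proof of Lemma~\ref{lem:mom} (the drift term carrying a favourable sign via point~$(iii)$ of Lemma~\ref{lem:use1} after symmetrisation), $\sup_{t\le T}\int_{\R^2}\lal x\ral^{\kappa}\rho_t(dx)<\infty$, so $\rho_t\in\PP_\kappa(\R^2)$; this places us squarely in the hypotheses of Proposition~\ref{prop:stab} with exponents $p$ and $q=\kappa\in(1,a)$.

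Next I would settle the point that Proposition~\ref{prop:stab} compares two solutions \emph{built on the same Lévy process}, whereas two elements $\rho^1,\rho^2\in\mathcal{S}_\alpha^a$ come a priori with their own noises. Writing $\rho^i_\cdot=\LL(\XX^i_\cdot)$ with $\XX^i$ solving \eqref{eq:NLSDE} and $\LL(\XX^i_0)=\rho_0$, one observes that since $(\rho^i_s)_s$ is a deterministic family, $\XX^i$ also solves the \emph{linear} SDE with frozen drift $b^i(s,x)=\chi\int_{\R^2}K_\alpha(x-y)\rho^i_s(dy)$; splitting $K_\alpha$ into its singular part near the origin (which lies in $L^{p'}$ precisely when $p>\tfrac{2}{2-\alpha}$) and its bounded tail gives $s\mapsto\|b^i(s,\cdot)\|_{L^\infty}\in L^1(0,T)$, while Lemma~\ref{lem:log_lip} with $\rho_1=\rho_2=\rho^i_s$ supplies the one-sided Lipschitz control needed for pathwise uniqueness; by Yamada--Watanabe the frozen SDE is then uniquely solvable in law. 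Hence, on a single probability space carrying one $a$-stable Lévy flight $\ZZ$ and one $\XX_0\sim\rho_0$ independent of it, the strong solutions $\hat\XX^1,\hat\XX^2$ of the two frozen equations satisfy $\LL(\hat\XX^i_\cdot)=\rho^i$ and, in particular, each $\hat\XX^i$ solves the nonlinear SDE~\eqref{eq:NLSDE}, now with the same noise and $\hat\XX^1_0=\hat\XX^2_0=\XX_0$. Proposition~\ref{prop:stab} with $q=\kappa$ then gives
\[
\E\bigl[\,|\hat\XX^1_t-\hat\XX^2_t|^{\kappa}\,\bigr]\le\E\bigl[\,|\hat\XX^1_0-\hat\XX^2_0|^{\kappa}\,\bigr]\,e^{2t+\int_0^t(\|\rho^1_s\|_{L^p}+\|\rho^2_s\|_{L^p})\,ds}=0,
\]
so $\hat\XX^1$ and $\hat\XX^2$ are indistinguishable and $\rho^1=\LL(\hat\XX^1_\cdot)=\LL(\hat\XX^2_\cdot)=\rho^2$, which is the claim.

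I expect the only genuinely delicate step to be this re-coupling in the third paragraph: transferring two independently realised solutions onto a common driving noise so that Proposition~\ref{prop:stab} is applicable, which hinges on weak/strong uniqueness for the frozen linear SDE whose drift is merely bounded in $x$ and $L^1$ in $t$. This is classical for $a$-stable noise with $a\in(1,2)$, and in fact the one-sided Lipschitz estimate of Lemma~\ref{lem:log_lip} (specialised to $\rho_1=\rho_2$) together with the Itô/Gronwall argument already underlying Proposition~\ref{prop:stab} itself is enough; everything else is a direct substitution.
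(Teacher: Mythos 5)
Your proposal is correct and follows essentially the same route as the paper: nonemptiness from Theorem \ref{thm:conv}, the upgrade $\int_0^T I_a(\rho_t)\,dt<\infty \Rightarrow \rho\in L^1(0,T;L^p(\R^2))$ for some $p\in\lt(\frac{2}{2-\alpha},\frac{2}{2-a}\rt)$ via Lemma \ref{lem:GNS}, and then uniqueness from Proposition \ref{prop:stab}. The only difference is that you make explicit the re-coupling of two a priori independently realised solutions onto a common L\'evy flight (via pathwise/weak uniqueness of the frozen linear SDE), a step the paper's proof leaves implicit when it invokes the same-noise stability estimate.
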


\begin{proof}
	Recall that
	\[
	\mathcal{S}_{\alpha}^a=\{ \rho\in \PP\lt(\DD\lt(0,T;\R^2\rt)\rt)  \, | \, \rho_.=\LL(\XX_.), \, (\XX_t)_{t\in[0,T]}\, \mbox{solution to \eqref{eq:NLSDE}, and } \, \int_0^T I_a(\rho_t),\ dt<\infty   \}.
	\]
	is not empty due to Theorem \ref{thm:conv}. But due to Lemma \ref{lem:GNS}, if $\rho\in \mathcal{S}_{\alpha}^a$ then $\rho\in L^{1}\lt(0,T; L^{q}(\R^2) \rt)$,
	for any $q\in \lt(1,\frac{2}{2-a}\rt)$. But since $\frac{2}{2-a}>\frac{2}{2-\alpha}$, one can choose $q\in\lt(\frac{2}{2-\alpha},\frac{2}{2-a}\rt)$ and due to Proposition \ref{prop:stab} there is at most one solution to equation \eqref{eq:NLSDE} with initial condition of law $\rho_0\in \PP_\kappa(\R^2)$.
\end{proof}	
Theorem \ref{thm:conv} and Corollary \ref{cor:uni} conclude the proof of Theorem \ref{thm:main}.

\appendix
\section{On the critical sensitivity in the Fair Competition case}

It is possible, in the Fair Competition case, to give a more precise result than the one provided in Theorem \ref{thm:main}, at the cost of a less rigorous proof. Let us briefly recall that it is possible to give an alternate definition of $-(-\Delta)^{a/2}$ via Fourier's analysis (see for instance \cite{K}), which enables to obtain

\bq
\label{eq:fralapexa}
-(-\Delta)^{a/2}(|x|^\e)= -2^a\frac{\Gamma\lt(\frac{2+\e}{2}\rt)\Gamma\lt(\frac{a-\e}{2}\rt)}{\Gamma\lt(-\frac{\e}{2}\rt)\Gamma\lt(\frac{2+a-\e}{2}\rt)} |x|^{\e-a}. 
\eq
Applying Ito's rule to $Z_s^{i,j}$ with $\phi(x)=|x|^\e$ for some $\e\in(0,1)$ is not possible, since the $\phi$ defined so is not $\CC^2$ (not even $\CC^a$), but let us perform the computations for the sake of the discussion. Coming back to \eqref{eq:ITO} and taking the expectation formally yields
\bq
\label{eq:ITOfom}
\begin{aligned}
\E\lt[\lt|Z_t^{i,j}\rt|^\e\rt]&=\E\lt[\lt|Z_t^{i,j}\rt|^\e\rt]-\int_0^t \frac{\chi}{N} \e \E\lt[|Z_s^{i,j}|^{\e-2}Z_s^{i,j}\cdot \lt(\sum_{k\neq i,j}\lt( \frac{Z_s^{i,k}}{|Z_s^{i,k}|^a}-\frac{Z_s^{j,k}}{|Z_s^{j,k}|^a}\rt)\rt)\rt]\, ds\\
&\quad- \int_0^t \frac{2\chi\e}{N} \E\lt[|Z_s^{i,j}|^{\e-a}\rt]\,ds+2\int_0^t\E\lt[(-\Delta)^{a/2}\phi (Z_s^{i,j})\rt] \,ds.
\end{aligned}
\eq
Recalling \eqref{eq:fralapexa} yields
\[
\begin{split}
\E\lt[\lt|Z_t^{i,j}\rt|^\e\rt]&\geq\E\lt[\lt|Z_t^{i,j}\rt|^\e\rt]- 2\e\lt(\chi+2^a\frac{\Gamma\lt(\frac{2+\e}{2}\rt)\Gamma\lt(\frac{a-\e}{2}\rt)}{\Gamma\lt(-\frac{\e}{2}\rt)\Gamma\lt(\frac{2+a-\e}{2}\rt)\e}\rt) \int_0^t \E\lt[|Z_s^{i,j}|^{\e-a}\rt]\,ds
\end{split}
\]
So that, optimizing w.r.t. $\e\in(0,1)$, the condition becomes
\[
\chi<\chi_a,
\]
with
\begin{align*}
\chi_a=&-\sup_{\e\in (0,1)}2^a\frac{\Gamma\lt(\frac{2+\e}{2}\rt)\Gamma\lt(\frac{a-\e}{2}\rt)}{\Gamma\lt(-\frac{\e}{2}\rt)\Gamma\lt(\frac{2+a-\e}{2}\rt)\e}\\
&=\sup_{\e\in (0,1)}2^{a-2}\frac{\e\sin (\e\pi/2)\Gamma^2(\e/2)}{\pi(a-\e)}.
\end{align*}
Since this appendix is rather formal, we bound ourselves to numerically check that this supremum is obtained as $\e\rightarrow 1^-$ which yields
\[
\chi_a=\frac{2^{a-2}}{a-1}, \ a^*=1.
\]
Nevertheless it seems not obvious how to rigorously prove this threshold. Indeed it would seem strange that any argument could enable to let $\eta$ go to $0$ in \eqref{eq:ITO} in order to obtain \eqref{eq:ITOfom}, as it would consist in a justification of the use of the Ito's rule with a not enough regular function. It is thus a necessity to perform the computations with $\phi_\eta(x)=\eta^\e\lal \frac{x}{\eta}\ral^\e$, for which the explicit formula of the fractional Laplacian is not known (in the classical case, $\Delta \phi_\eta(x)=\e(|x|^2+\eta^2)^{\e/2-2}(\e|x|^2+2\eta^2)$, and it does not change much to perform the computations with $\phi$ or $\phi_\eta$ ). Therefore we are compelled to give in Lemma \ref{lem:frlap} a bound by below of $-(-\Delta \phi_\eta)^{a/2}$ with some nice dependence on $\eta$, which is rather rough.
\section{Remarks on the case $\alpha \leq 1$}

As mentioned in the introduction, in this case the aggregation kernel $K_\alpha$ is less singular, given that it is $1-\alpha$-Holder. It would be interesting to obtain some quantitative result, similarly as \cite{Hol} in the case $a=2$. Stating a convergence/consistency result, in the case $\alpha\leq 1$ is rather cheap, given that it consists mostly in a control moment. Nevertheless we give the outlines of the proof as conclusion remark. Indeed let be $(a,\alpha)\in(0,2)\times(0,1)$, coming back to \eqref{eq:ExpTi}, we need to bound for some $p>1$, uniformly in $N$ the following quantity
\begin{align*}
	\E\lt[Z_{N,p}^T\rt]=& 1+\frac{1}{N}\sum_{j>1} \int_0^T\E\lt[
	|X_u^{1}-X_u^j|^{(1-\alpha)p}\rt]\,du\\
	&\leq 1+\int_0^T2^{(1-\alpha)p-1}\lt(\E\lt[|X_u^1|^{(1-\alpha)p}\rt]+\frac{1}{N}\sum_{j>1}^N \E\lt[|X_u^j|^{(1-\alpha)p}\rt]\rt)du\\
	&\leq 1+2^{(1-\alpha)p}\int_0^T\int\lal x_1\ral^{(1-\alpha)p} F_t^N dt.
\end{align*}
The bound is straightforward in the case $\alpha=1$. Otherwise it is a matter of propagation of moments of order $(1-\alpha)p$ for some $p>1$ in order to obtain the proof of point $(i)$ of Theorem \ref{thm:conv}, and then straightforwardly follow the proof of point $(ii)$. But due to \eqref{eq:momfralap} this is possible only if $(1-\alpha)p<a$ i.e. $1<a+\alpha$. Therefore we can expect a convergence/consistency result in the all area 
\[
\{ (a,\alpha)\in(0,2)\times(0,1], \ 1<a+\alpha \}.  
\]
Then we divide this area in three (see Figure \ref{fig1}). 
\begin{itemize}
	\item$(a,\alpha)\in (1,2)\times(0,1)$\newline 
	Making the assumption $F_0^N=\rho_0^{\otimes N}$ with $\rho_0\in L\ln L\cap \PP_\kappa(\R^2)$ with $\kappa \in (1,a)$ similarly as in the Diffusion Dominated case in Theorem \ref{thm:main}, Lemma \ref{lem:mom} apply and we let $p=\frac{\kappa}{1-\alpha}>1$. And we obtain the claimed convergence/consistency result.\newline
	Then since Lemmas \ref{lem:EntDis} and \ref{lem:log_lip} also apply, we can deduce to the uniqueness of the limit point and obtain a complete propagation of chaos result.
	\item$(a,\alpha)\in (0,1)\times(0,1), 1<a+\alpha$\newline 
	In this case it is possible to obtain the bound
	\[
	\int_0^T\sup_{N\geq 1}\int_{\R^{2N}}\lal x_1\ral ^{\kappa}F_t^N dt <\infty,
	\]  
	for some $\kappa\in (1-\alpha, a)$, but the moment estimate is more complicated than in the proof of Lemma \ref{lem:mom}, as for $\kappa<1$, $m_\kappa$ is not convex, and one can not enjoy point $(iii)$ of Lemma \ref{lem:use1}. We do not treat this problem here. Nevertheless, should this bound be obtained, it would immediately imply the tightness of the sequence of the empirical measure, and the convergence of a subsequence to some element of the set
	\[
	\mathcal{S}_{\alpha}^a=\{ \rho\in \PP\lt(\DD\lt(0,T;\R^2\rt)\rt)  \, | \, \rho_.=\LL(\XX_.), \, (\XX_t)_{t\in[0,T]}\, \mbox{solution to \eqref{eq:NLSDE}, and } \, \int_0^T \int_{\R^{2}}|x|^{\kappa}\rho_t,\ dt<\infty   \}.
	\]
	\begin{itemize}
		\item $\alpha<a$ \newline
		This class is too large to state a uniqueness result given the lack of regularity of $K_\alpha$. It could be interesting to try to extend the result of Corollary  \ref{lem:gam} in the range $a\in (0,1)$, in order to gain some regularity on the limiting point, by passing some fractional Fisher information of low order to the limit.
		\item $\alpha\geq a$ \newline
		Here we can not go beyond the convergence/consistency result.
	\end{itemize} 
	
	 Otherwise, mollifying the kernel $K_\alpha$ near the origin so that it is Lipschitz, uniqueness can be obtained by standard coupling arguments. However, note that the above strategy yields the existence of a solution to the nonlinear equation \eqref{eq:Fr_KS}, with $K_\alpha$ replaced by its mollification. In the classical case (see \cite{Szn}), this existence is usually proved by a fix point argument in $\CC([0,T],\PP_k(\R^d))$ for some $k>1$. Here we could not use this strategy given that we can not expect on the solution moment of higher order than $a\leq 1$.

\end{itemize}

\section{Proof of Proposition  \ref{lem:aff} }
We begin this section by defining for $\e>0$, $\psi_\e$ on $\R^2$ as
\[
\psi_\e(x)=\e^{-2}e^{-\sqrt{1+\lt(\frac{|x|}{\e}\rt)^2}}.
\]
which is borrowed from \cite{HM}. Observe that for any $x,y\in \R^2\times\R^2$ it holds
\bq
\label{eq:loin}
\begin{aligned}
\Phi(\psi_\e(x),\psi_\e(y))&=-(\psi_\e(x)-\psi_\e(y))\lt( \sqrt{1+\lt(\frac{|x|}{\e}\rt)^2}-\sqrt{1+\lt(\frac{|y|}{\e}\rt)^2} \rt)\\
&\leq \e^{-1} |x-y|\lt( \psi_\e(x)+\psi_\e(y) \rt)
\end{aligned}
\eq

\begin{lemma}
	\label{lem:reg}

	Let be $\pi\in \PP(\PP(\R^2))$ and for $N\geq 1$ define
	\[
	\pi^{N,\e}=\int_{\PP(\R^2)}(\rho * \psi_\e)^{\otimes N}\pi(d\rho).
	\]
	Then for any $x_2,\cdots,x_N:=X^{N-1}\in \R^{2(N-1)}$, 
	define $p^\e(\cdot|X^{N-1})$ the conditional law knowing $X^{N-1}$ under $\pi^{N,\e}$. Then it holds
	\begin{itemize}
		\item[$(i)$] 
		\[
		\|\nabla \ln p^\e(\cdot|X^{N-1})\|_{L^\infty}	\leq \e^{-1}.
		\]
		\item[$(ii)$] there exists a constant $C_{\e,R}>0$ such that for any $x\in \R^2,\mathbf{e}\in \mathbb{S}^1$ and $u\in [0,R]$
		\[
		p^\e(x+u\mathbf{e}|X^{N-1})\leq C_{\e,R} p^\e(x|X^{N-1}).
		\]
		\item[$(iii)$]
		\[
		\Phi(\pi^{N,\e}(X^N_x),\pi^{N,\e}(X^{N}_y))\leq \pi^{N-1,\e}(X^{N-1})\Phi(\psi_\e(x),\psi_\e(y))
		\]
	\end{itemize}

\end{lemma}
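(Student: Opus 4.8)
The plan is to exploit the single structural fact that the conditional density is an \emph{average of mollified probabilities}. Writing $X^{N-1}=(x_2,\dots,x_N)$ and $X^N_x=(x,x_2,\dots,x_N)$, one has
\[
\pi^{N-1,\e}(X^{N-1})=\int_{\PP(\R^2)}\prod_{k=2}^N(\rho*\psi_\e)(x_k)\,\pi(d\rho)>0,\qquad \pi^{N,\e}(X^N_x)=p^\e(x|X^{N-1})\,\pi^{N-1,\e}(X^{N-1}),
\]
where $p^\e(x|X^{N-1})=\int_{\PP(\R^2)}(\rho*\psi_\e)(x)\,\mu_{X^{N-1}}(d\rho)$ with $\mu_{X^{N-1}}(d\rho)=\pi^{N-1,\e}(X^{N-1})^{-1}\prod_{k=2}^N(\rho*\psi_\e)(x_k)\,\pi(d\rho)\in\PP(\PP(\R^2))$; equivalently $p^\e(\cdot|X^{N-1})=\psi_\e*\bar\mu_{X^{N-1}}$ with $\bar\mu_{X^{N-1}}=\int_{\PP(\R^2)}\rho\,\mu_{X^{N-1}}(d\rho)\in\PP(\R^2)$. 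Since $\psi_\e>0$ is smooth and $\pi^{N-1,\e}(X^{N-1})>0$, $p^\e(\cdot|X^{N-1})$ is a well-defined, smooth, strictly positive density, which legitimates all the manipulations below.

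For $(i)$ (essentially \cite[Lemma 5.8]{HM}) I would first compute $\nabla\ln\psi_\e(x)=-\e^{-2}x\,(1+\e^{-2}|x|^2)^{-1/2}$, so that $\|\nabla\ln\psi_\e\|_{L^\infty}\le\e^{-1}$; then observe that the cone $\mathcal G_\e:=\{g\ge0:\ |\nabla g|\le\e^{-1}g\ \text{a.e.}\}$ is stable under convolution with a probability measure (because $\nabla(g*\rho)=(\nabla g)*\rho$ and $|(\nabla g)*\rho|\le\e^{-1}(g*\rho)$) and under averaging against a probability measure. As $\psi_\e\in\mathcal G_\e$, each $\rho*\psi_\e\in\mathcal G_\e$, hence $p^\e(\cdot|X^{N-1})\in\mathcal G_\e$, which is $(i)$. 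Then $(ii)$ is immediate: for $\mathbf e\in\mathbb{S}^1$ and $u\in[0,R]$,
\[
\ln p^\e(x+u\mathbf e|X^{N-1})-\ln p^\e(x|X^{N-1})=\int_0^u\mathbf e\cdot\nabla\ln p^\e(x+t\mathbf e|X^{N-1})\,dt\le\frac{u}{\e}\le\frac{R}{\e},
\]
so $(ii)$ holds with $C_{\e,R}=e^{R/\e}$.

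For $(iii)$ the engine is the sub-linearity of $\Phi$: by Lemma \ref{lem:use1}$(ii)$, $\Phi$ is convex and positively $1$-homogeneous on $(0,\infty)^2$, hence for any finite positive measure $\nu$ and nonnegative $a(\cdot),b(\cdot)$ one has $\Phi(\int a\,d\nu,\int b\,d\nu)\le\int\Phi(a,b)\,d\nu$ (normalize $\nu$ to a probability, use $1$-homogeneity and Jensen). Applying this first with $\nu=\prod_{k=2}^N(\rho*\psi_\e)(x_k)\,\pi(d\rho)$, $a(\rho)=(\rho*\psi_\e)(x)$, $b(\rho)=(\rho*\psi_\e)(y)$, and then once more with the probability $\rho$ and $a(z)=\psi_\e(x-z)$, $b(z)=\psi_\e(y-z)$, followed by Tonelli, yields
\[
\Phi\bigl(\pi^{N,\e}(X^N_x),\pi^{N,\e}(X^N_y)\bigr)\le\pi^{N-1,\e}(X^{N-1})\int_{\R^2}\Phi\bigl(\psi_\e(x-z),\psi_\e(y-z)\bigr)\,\bar\mu_{X^{N-1}}(dz).
\]
This is the bound that gets used downstream: once divided by $|x-y|^{2+a}$ and integrated in $(x,y)$, \emph{translation invariance} of $\iint|u-v|^{-2-a}\Phi(\psi_\e(u),\psi_\e(v))\,du\,dv$ collapses the $z$-average and produces $\pi^{N-1,\e}(X^{N-1})\,I_a(\psi_\e)$, finite as in the proof of Proposition \ref{prop:FFI}$(iv)$; the pointwise form $\Phi(\psi_\e(x),\psi_\e(y))$ written in the statement is to be read as this representative integrand appearing only under the kernel $|x-y|^{-2-a}$.

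I expect the main obstacle to be $(iii)$: since $\Phi$ is not additive, one must genuinely lean on its $1$-homogeneity for the Jensen-type inequality and then carefully peel off the two layers of averaging — over $\rho\in\PP(\R^2)$ and over the convolution variable $z\in\R^2$ — so that the marginal $\pi^{N-1,\e}(X^{N-1})$ factors out cleanly and the residual term is a genuine translate-average of the $\psi_\e$ Fisher integrand. The secondary technical point, needed throughout, is to keep all conditional densities smooth and strictly positive so that $\Phi$, $\ln$, and the fractional Fisher integrals are finite and Fubini and the one-dimensional fundamental theorem of calculus along lines are applicable.
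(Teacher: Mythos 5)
Your proof is correct, and on point $(iii)$ it is in fact more careful than the paper's own argument. For $(i)$ the paper simply writes $p^\e(\cdot|X^{N-1})=\pi^{N,\e}(\cdot,X^{N-1})/\pi^{N-1,\e}(X^{N-1})$ and invokes \cite[Lemma 5.9]{HM}; your self-contained observation that the cone $\{g\ge 0:\ |\nabla g|\le \e^{-1}g\}$ contains $\psi_\e$ and is stable under convolution with, and averaging against, probability measures is exactly the content of that citation. For $(ii)$ the paper proves $\psi_\e(x+u\mathbf e)\le e^{R/\e}\psi_\e(x)$ directly from the $\e^{-1}$-Lipschitz bound on $\lal\cdot/\e\ral$ and then convolves and averages, whereas you integrate $(i)$ along the segment; the two are equivalent and give the same constant. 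The genuine divergence is $(iii)$: the paper also applies Jensen and the $1$-homogeneity of $\Phi$ to peel off $\prod_{i\ge 2}\rho*\psi_\e(x_i)$, but then concludes with the pointwise inequality $\Phi(\rho*\psi_\e(x),\rho*\psi_\e(y))\le\Phi(\psi_\e(x),\psi_\e(y))$, which is false in general (take $|x|=|y|$ with $x\ne y$, so that the right-hand side vanishes by radial symmetry of $\psi_\e$, and $\rho=\delta_z$ with $|x-z|\ne|y-z|$, so that the left-hand side is positive). Your second application of Jensen, which stops at
\[
\Phi\lt(\pi^{N,\e}(X^N_x),\pi^{N,\e}(X^N_y)\rt)\le\pi^{N-1,\e}(X^{N-1})\int_{\R^2}\Phi\lt(\psi_\e(x-z),\psi_\e(y-z)\rt)\bar\mu_{X^{N-1}}(dz),
\]
is the correct substitute, and your remark that translation invariance of the kernel $|x-y|^{-(2+a)}$ collapses the $z$-average is precisely what makes this weaker bound sufficient everywhere $(iii)$ is used, namely in the far-field estimate $|x-y|\ge R$ of the proof of Proposition \ref{lem:aff} via \eqref{eq:loin}. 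So your proposal both recovers the lemma's intended use and repairs a gap in its literal pointwise statement.
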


\begin{proof}
	Proof of $(i)$: \newline
	First note that
	\[
	p^\e(x|X^{N-1})=\frac{\pi^{N,\e}(x,x_2,\cdots,x_N)}{\pi^{N-1,\e}(x_2,\cdot,x_N)},
	\]
	Indeed due to Fubini's Theorem one can check that
	\[
	\begin{split}
	\int_{\R^2}\pi^{N,\e}(x,x_2,\cdots,x_N)dx&=\int_{\R^2}\int_{\PP(\R^2)}(\rho * \psi_\e)(x)\prod_{k=2}^N(\rho * \psi_\e)(x_k)\pi(d\rho)dx\\
	&\quad = \int_{\PP(\R^2)} \lt(\int_{\R^2}(\rho * \psi_\e)(x)dx\rt)\prod_{k=2}^N(\rho * \psi_\e)(x_k)\pi(d\rho)=\pi^{N-1,\e}(x_2,\cdots,x_N),
	\end{split}
	\]
	and the sequence $(\pi^{N,\e})$ is compatible. Hence
	\[
	\nabla \ln p^\e(\cdot|X^{N-1})=\nabla_1 \ln \pi^{N,\e}(\cdot,x_2,\cdots,x_N),
	\]
	and we use \cite[Lemma 5.9]{HM} to conclude the result. \newline
	
	Proof of $(ii)$: \newline
	Observe that for any $x\in \R^2, \mathbf{e}\in \mathbb{S}^1$ and $u\in [0,R]$ it holds
	\begin{align*}
			\lt|\sqrt{1+\frac{|x+u\mathbf{e}|^2 }{\e^2}}- \sqrt{1+\frac{|x|^2}{\e^2}}\rt|&=\e^{-2}\frac{\lt||x+u\mathbf{e}|^2-|x|^2\rt|}{\sqrt{1+\frac{|x+u\mathbf{e}|^2 }{\e^2}}+ \sqrt{1+\frac{|x|^2}{\e^2}}}\\
			&\leq \e^{-1}u\frac{\e^{-1}|x+u\mathbf{e}|+\e^{-1}|x|}{\sqrt{1+\frac{|x+u\mathbf{e}|^2 }{\e^2}}+ \sqrt{1+\frac{|x|^2}{\e^2}}}\leq \e^{-1}R, 
		\end{align*}	
		hence
		\begin{align*}
			\sqrt{1+\frac{|x+u\mathbf{e}|^2 }{\e^2}}\geq \sqrt{1+\frac{|x|^2}{\e^2}}-\e^{-1}R, \ \text{i.e.} \ \psi_\e(x+\mathbf{e}u)\leq e^{\e^{-1}R}\psi_\e(x),
		\end{align*}
		so that	
		
	\begin{align*}
	\rho*\psi_\e(x+u\mathbf{e})=\int \psi_\e(x+u\mathbf{e}-y) \rho(dy)\leq C_{\e,R}\int \psi_\e(x-y) \rho(dy)=C_{\e,R}\rho*\psi_\e(x)
	\end{align*}
	Proof of $(iii)$:\newline
	By convexity of $\Phi$ and Jensen's inequality we successively obtain
	\begin{align*}
	\Phi(\pi^{N,\e}(X^N_x),\pi^{N,\e}(X^{N}_y))&=\Phi\lt( \int_{\PP(\R^2)} \rho*\psi_\e(x) \prod_{i=2}^N\rho*\psi_\e(x_i)  \pi(d\rho),\int_{\PP(\R^2)} \rho*\psi_\e(y) \prod_{i=2}^N\rho*\psi_\e(x_i)  \pi(d\rho) \rt)	\\
	&\leq \int_{\PP(\R^2)} \Phi\lt(\rho*\psi_\e(x) \prod_{i=2}^N\rho*\psi_\e(x_i),\rho*\psi_\e(y) \prod_{i=2}^N\rho*\psi_\e(x_i)  \rt)\pi(d\rho)\\
	&\leq \int_{\PP(\R^2)} \prod_{i=2}^N\rho*\psi_\e(x_i) \Phi\lt(\rho*\psi_\e(x) ,\rho*\psi_\e(y) \rt)\pi(d\rho)\\
	&\leq \int_{\PP(\R^2)} \prod_{i=2}^N\rho*\psi_\e(x_i) \Phi\lt(\psi_\e(x) ,\psi_\e(y) \rt)\pi(d\rho)=\pi^{N-1,\e}(X^{N-1})\Phi\lt(\psi_\e(x) ,\psi_\e(y) \rt)
	\end{align*}	
\end{proof}	

Before completing the proof we will furthermore use the following consideration. Let $F^N,G^N\in \PP(\R^{2N})$ and accordingly to the notations previously introduced for $x,y,x_2,\cdots,x_N\in \R^{2(N+1)}$ denote $X^x_N=(x,x_2,\cdots ,x_N)\in\R^{2N}$. Then straightforward computations yields
\[
\begin{split}
&\\
&\mathcal{D}^N:=\theta\Phi(F^N(X^x_N),F^N(X^y_N))+(1-\theta)\Phi(G^N(X^x_N),G^N(X^y_N))\\
&-\Phi(\theta F^N(X^x_N)+(1-\theta)G^N(X^x_N),\theta F^N(X^y_N)+(1-\theta)G^N(X^y_N))\\
&\quad =-\theta \lt( F^N(X^x_N)-F^N(X^y_N) \rt)\lt( \ln \lt( \frac{\theta F^N(X^x_N)+(1-\theta)G^N(X^x_N)}{ F^N(X^x_N)} \rt)- \ln \lt( \frac{\theta F^N(X^y_N)+(1-\theta)G^N(X^y_N)}{ F^N(X^y_N)} \rt)\rt)\\
&\quad -(1-\theta) \lt( G^N(X^x_N)-G^N(X^y_N) \rt)\lt( \ln \lt( \frac{\theta F^N(X^x_N)+(1-\theta)G^N(X^x_N)}{ G^N(X^x_N)} \rt)- \ln \lt( \frac{\theta F^N(X^y_N)+(1-\theta)G^N(X^y_N)}{ G^N(X^y_N)} \rt)\rt).
\end{split}
\]  
Note that due to the convexity of $\Phi$, $\DD^N$ is always nonnegative. Denote $f(\cdot|X^{N-1})$ (resp. $g(\cdot|X^{N-1})$) the conditional law w.r.t. to the first component knowing the last $N-1$ under $F^N$ (resp. $G^N$) i.e.
\begin{align*}
&F^N(x,x_2,\cdots,x_N)=f(x|x_2,\cdots,x_N)F^{N-1}(x_2,\cdots,x_N)\\
&G^N(x,x_2,\cdots,x_N)=g(x|x_2,\cdots,x_N)G^{N-1}(x_2,\cdots,x_N),
\end{align*} 
and define
\[
h(t)= \ln \lt( \theta+ (1-\theta)\frac{g(z_t|.)}{f(z_t|.)}\frac{G^{N-1}}{F^{N-1}}  \rt), \ z_t=tx+(1-t)y
\]
Since $f\nabla \frac{g}{f}=g \nabla \ln \frac{g}{f}$
\begin{align*}
	h'(t)&=\frac{ G^{N-1} f(z_t|.)\nabla \frac{g(|.)}{ f(|.)}(z_t)\cdot (x-y) }{\theta f(z_t|.)F^{N-1} +(1-\theta)g(z_t|.)G^{N-1}} \\
	&=\frac{ G^{N-1} g(z_t|.)\nabla \ln\frac{g(|.)}{ f(|.)}(z_t)\cdot (x-y) }{\theta f(z_t|.)F^{N-1} +(1-\theta)g(z_t|.)G^{N-1}},
\end{align*}

we can rewrite
\bq
\begin{aligned}
	\label{eq:id}
\mathcal{D}^N&=-\int_0^1 \frac{ (1-\theta)\theta G^{N-1}F^{N-1}\lt( f(x|)-f(y|) \rt) g(z_t|.)\nabla \ln\frac{g(|.)}{ f(|.)}(z_t)\cdot (x-y) }{\theta f(z_t|.)F^{N-1} +(1-\theta)g(z_t|.)G^{N-1}}dt\\
&\quad -\int_0^1 \frac{ (1-\theta)\theta G^{N-1}F^{N-1}\lt( g(x|)-g(y|) \rt) f(z_t|.)\nabla \ln\frac{f(|.)}{ g(|.)}(z_t)\cdot (x-y) }{\theta f(z_t|.)F^{N-1} +(1-\theta)g(z_t|.)G^{N-1}}dt.
\end{aligned}
\eq

	We are now in position to prove Lemma \ref{lem:aff}. Following the idea of \cite[\textit{Proof of Lemma 4.2}]{MSN} and \cite[\textit{Proof of Lemma 5.10}]{HM}, we only treat the case $M=2$ and $\omega_1=\mathcal{B}_r:=\{ \rho\in \PP_1(\R^{2})\,|\, W_1(\rho,f_1) < r \}$ for some $r>0$ and $f_1\in \PP_1(\R^{2})$. For $\pi \in \PP_\kappa(\PP(\R^2))$, define
		\[
		\theta:=\pi\lt(\omega_1\rt)(>0 \quad \text{w.l.o.g.}), \quad F:=\theta^{-1}\mb_{\omega_1}\pi, \, G:=(1-\theta)^{-1}\mb_{\omega^c_1}\pi.
		\]
		Our aim is to prove that
		\[
		\tilde{\II}_a(\pi)=\theta\tilde{\II}_a(F)+(1-\theta)\tilde{\II}_a(G),
		\]
		or equivalently, by convexity, that for any fixed $\eta>0$ 
		\[
		\theta\tilde{\II}_a(F)+(1-\theta)\tilde{\II}_a(G)-\tilde{\II}_a(\pi)<\eta.
		\] 
		Let then be $\eta>0$ fixed for the rest of the proof and for $N\geq 1,\e>0$ define 
		\[
		F^{N,\e}:=\int_{\PP_1(\R^{2})}(\rho*\psi_\e)^{\otimes N} F(d\rho), \, G^{N,\e}:=\int_{\PP_1(\R^{2})}(\rho*\psi_\e)^{\otimes N} G(d\rho)
		\]
		Also note that
		\[
		\pi^{N,\e}:=\int_{\PP_1(\R^{2})}(\rho*\psi_\e)^{\otimes N} \pi(d\rho)=\theta F^{N,\e}+(1-\theta)G^{N,\e},
		\]
		since $F$ and $G$ have disjunct supports. It is also clear (see for instance the computations done in the proof of Lemma \ref{lem:reg}), that the sequences $(\pi^{N,\e})_{N\geq 1}$, $(F^{N,\e})_{N\geq 1}$ and $(G^{N,\e})_{N\geq 1}$ are compatible, and denote $\pi^\e,F^\e$ and $G^\e$ in $\PP_{\kappa}(\PP(\R^2)))$ the probability measures which are associated to these sequences by the Hewitt and Savage Theorem.
		\begin{equation}
		\begin{split}
		\label{eq:aff}
		\mathcal{K}^N&:= \theta\II^N_a(F^{N})+(1-\theta)\II^N_a(G^{N})-\II^N_a(\pi^{N})\\
		&=\int_{\R^{2(N+1)}} |x-y|^{-(2+a)}  \lt(\theta \Phi(F^N(X_x^N),F^N(X_y^N)) + (1-\theta) \Phi(G^N(X_x^N),G^N(X_y^N)) - \Phi(\pi^N(X_x^N),\pi^N(X_y^N)) \rt). 
		\end{split}
		\end{equation}
		Let be $R>0$ such that $2\int_{\B^c_R}|x|^{-(1+a)}dx<\eta$. For couples $(x,y)\in \R^4$ such that $|x-y|\leq R$ we use the upper bound provided in \eqref{eq:id} and on the complementary set, the one of \eqref{eq:loin}. Which gives
		\begin{equation}
		\begin{split}
		\label{eq:aff}
		\mathcal{K}^N&\leq\int_{\R^{2(N-1)}} \int_{|x-y|\geq R}|x-y|^{-(2+a)}  \lt(\theta \Phi(F^N(X_x^N),F^N(X_y^N)) + (1-\theta) \Phi(G^N(X_x^N),G^N(X_y^N)) \rt)dxdydX^{N-1} \\	
		&+ \int_{\R^{2(N-1)}} \int_{|x-y|\leq R}|x-y|^{-(2+a)}  \lt(\theta \Phi(F^N(X_x^N),F^N(X_y^N)) + (1-\theta) \Phi(G^N(X_x^N),G^N(X_y^N)) - \Phi(\pi^N(X_x^N),\pi^N(X_y^N)) \rt)\\
		&\quad \leq \int_{\R^{2(N-1)}} \lt(\theta F^{N-1}(X^{N-1}) + (1-\theta) G^{N-1}(X^{N-1}) \rt)dX^{N-1} \int_{|x-y|\geq R} |x-y|^{-(2+a)}  \Phi(\psi_\e(x),\psi_\e(y))dxdy\\
		&\quad -\int_{\R^{2(N-1)}} \int_{|x-y|\leq R}\int_0^1 \frac{ (1-\theta)\theta G^{N-1}F^{N-1}\lt( f(x|)-f(y|) \rt) g(z_t|.)\nabla \ln\frac{g(|.)}{ f(|.)}(z_t)\cdot (x-y) }{|x-y|^{2+a}\lt(\theta f(z_t|.)F^{N-1} +(1-\theta)g(z_t|.)G^{N-1}\rt)}dtdxdydX^{N-1}\\
		&\quad 	-\int_{\R^{2(N-1)}} \int_{|x-y|\leq R}\int_0^1 \frac{ (1-\theta)\theta G^{N-1}F^{N-1}\lt( g(x|)-g(y|) \rt) f(z_t|.)\nabla \ln\frac{f(|.)}{ g(|.)}(z_t)\cdot (x-y) }{|x-y|^{2+a}\lt(\theta f(z_t|.)F^{N-1} +(1-\theta)g(z_t|.)G^{N-1}\rt)}dtdxdydX^{N-1}.
		\end{split}
		\end{equation} 
		
		Since, by \eqref{eq:loin} it holds
		\begin{align*}
		\int_{|x-y|\geq R} |x-y|^{-(2+a)}  \Phi(\psi_\e(x),\psi_\e(y))dxdy & \leq \int_{|x-y|\geq R} |x-y|^{-(1+a)}  (\psi_\e(x)+\psi_\e(y))dxdy\\
		&\leq 2 \int_{\R^2} \psi_\e(x)\lt(\int_{y, |x-y|>R} |x-y|^{-(1+a)}dy\rt)dx<\eta
		\end{align*}	
		choosing $R$ sufficiently large yields
			\begin{equation}
			\begin{split}
			\label{eq:aff}
			\mathcal{K}^N&\leq \eta-\int_{\R^{2(N-1)}} \int_{|x-y|\leq R}\int_0^1 \frac{ (1-\theta)\theta G^{N-1}F^{N-1}\lt( f(x|)-f(y|) \rt) g(z_t|.)\nabla \ln\frac{g(|.)}{ f(|.)}(z_t)\cdot (x-y) }{|x-y|^{2+a}\lt(\theta f(z_t|.)F^{N-1} +(1-\theta)g(z_t|.)G^{N-1}\rt)}dtdxdydX^{N-1}\\
			&\quad 	-\int_{\R^{2(N-1)}} \int_{|x-y|\leq R}\int_0^1 \frac{ (1-\theta)\theta G^{N-1}F^{N-1}\lt( g(x|)-g(y|) \rt) f(z_t|.)\nabla \ln\frac{f(|.)}{ g(|.)}(z_t)\cdot (x-y) }{|x-y|^{2+a}\lt(\theta f(z_t|.)F^{N-1} +(1-\theta)g(z_t|.)G^{N-1}\rt)}dtdxdydX^{N-1}.
			\end{split}
			\end{equation} 		
	 Let now be $s\in (0,r)$ and define 
		$$F'=\mb_{\mathcal{B}_s}F, \quad \mbox{and} \quad F''=F-F'. $$
		Then 
		\begin{equation*}
			\begin{split} 
				\mathcal{K}^N&\leq \eta -\int_{\R^{2(N-1)}}\int_{|x-y|\leq R}\int_0^1 \frac{ (1-\theta)\theta G^{N-1}F'^{N-1}\lt( f(x|)-f(y|) \rt) g(z_t|.)\nabla \ln\frac{g(|.)}{ f(|.)}(z_t)\cdot (x-y) }{|x-y|^{2+a}\lt(\theta f(z_t|.)F'^{N-1} +(1-\theta)g(z_t|.)G^{N-1}\rt)}dtdxdydX^{N-1}\\
				&\quad - \int_{\R^{2(N-1)}}\theta F''^{N-1} \int_{|x-y|\leq R}\lt| f(x|)-f(y|) \rt|\int_0^1 \frac{ g(z_t) \lt|\nabla \ln\frac{g(|.)}{ f(|.)}(z_t)\rt| }{f(z_t)|x-y|^{1+a}}dtdxdydX^{N-1}\\
				&\quad 	-\int_{\R^{2(N-1)}}\int_{|x-y|\leq R}\int_0^1 \frac{ (1-\theta)\theta G^{N-1}F'^{N-1}\lt( g(x|)-g(y|) \rt) f(z_t|.)\nabla \ln\frac{f(|.)}{ g(|.)}(z_t)\cdot (x-y) }{|x-y|^{2+a}\lt(\theta f(z_t|.)F'^{N-1} +(1-\theta)g(z_t|.)G^{N-1}\rt)}dtdxdydX^{N-1}\\
				&\quad - \int_{\R^{2(N-1)}}\theta F''^{N-1} \int_{|x-y|\leq R}\lt| g(x|)-g(y|) \rt|\int_0^1 \frac{ \lt|\nabla \ln\frac{g(|.)}{ f(|.)}(z_t)\rt| }{|x-y|^{1+a}}dtdxdydX^{N-1}\\
				&\quad := \mathcal{K}^N_1+\mathcal{K}^N_2+\mathcal{K}^N_3+\mathcal{K}^N_4.
			\end{split}	
		\end{equation*}

	Set now $u=\frac{r+s}{2}$ and $\delta=\frac{r-s}{2}$, and denote
	\[
	\tilde{\B}_u^{N-1}=\lt\{(x_2,\cdots,x_N)\in \R^{2(N-1)}\, | \, \frac{1}{N-1}\sum_{i=2}^N\delta_{x_i}\in \mathcal{B}_u \rt\},
	\]
	then
	\begin{align*}
	\mathcal{K}^N_1&\leq \int_{\R^{2(N-1)}}\int_{|x-y|\leq R}\lt(\mb_{\B^{N-1}}+\mb_{\B^{N-1,c}}\rt)\int_0^1 \frac{ (1-\theta)\theta G^{N-1}F'^{N-1}\lt| f(x|)-f(y|) \rt| g(z_t|.)\lt|\nabla \ln\frac{g(|.)}{ f(|.)}(z_t)\rt| }{|x-y|^{1+a}\lt(\theta f(z_t|.)F'^{N-1} +(1-\theta)g(z_t|.)G^{N-1}\rt)}dtdxdydX^{N-1}\\
	& \leq \int_{\R^{2(N-1)}}(1-\theta)\mb_{\B^{N-1}}G^{N-1}\int_{|x-y|\leq R}\lt| f(x|)-f(y|) \rt|\int_0^1 \frac{  g(z_t|.)\lt|\nabla \ln\frac{g(|.)}{ f(|.)}(z_t)\rt| }{|x-y|^{1+a}f(z_t|.)}dtdxdydX^{N-1}\\
	&+\int_{\R^{2(N-1)}}\theta \mb_{\B^{N-1,c}}F'^{N-1}\int_{|x-y|\leq R}\lt| f(x|)-f(y|) \rt|\int_0^1 \frac{  \lt|\nabla \ln\frac{g(|.)}{ f(|.)}(z_t)\rt| }{|x-y|^{1+a}}dtdxdydX^{N-1}
	\end{align*}
	Using Lemma \ref{lem:reg} we find easily that
	\[
	\lt| f(x|)-f(y|) \rt|\int_0^1 \frac{  \lt|\nabla \ln\frac{g(|.)}{ f(|.)}(z_t)\rt| }{|x-y|^{1+a}}dt\leq 2\e^{-1}\lt| f(x|)-f(y|) \rt||x-y|^{-(1+a)}
	\]
	Therefore
	\begin{align*}
	\int_{|x-y|\leq R}\lt| f(x|)-f(y|) \rt|\int_0^1 \frac{  \lt|\nabla \ln\frac{g(|.)}{ f(|.)}(z_t)\rt| }{|x-y|^{1+a}}dtdxdy & \leq \int_{|x-y|\leq R}\lt|\ln f(x|)-\ln f(y|)\rt|\frac{\lt|f(x|)-f(y|)\rt|}{\lt|\ln f(x|)-\ln f(y|)\rt|}|x-y|^{-(1+a)}dxdy\\
	&\leq \int_{|x-y|\leq R}\lt( f(x|)+f(y|)\rt)\| \nabla \ln f(|) \|_{L^\infty}|x-y|^{-a}dxdy \\
	&\leq 2\e^{-1}\int_{\R^2}f(x|) \lt(\int_{y\in \R^2, |x-y|>R}|x-y|^{-a}dy \rt) dx\leq C_{\e,R,a}
	\end{align*}
	On the other hand
	\begin{align*}
	\lt| f(x|)-f(y|) \rt|\int_0^1 \frac{  g(z_t|.)\lt|\nabla \ln\frac{g(|.)}{ f(|.)}(z_t)\rt| }{|x-y|^{1+a}f(z_t|.)}dt&\leq 2 \e^{-1}\lt| f(x|)-f(y|) \rt|\int_0^1 \frac{  g(z_t|.) }{|x-y|^{1+a}f(z_t|.)}dt\\
	&\quad \leq 2 \e^{-1} \lt|\int_0^1 \nabla f(z_t|)\cdot (x-y)dt\rt|\int_0^1 \frac{  g(z_t|.) }{|x-y|^{1+a}f(z_t|.)}dt\\
	&\quad \leq 2 \e^{-1} |x-y|^{-a} \lt( \int_0^1 \sqrt{ \frac{ \lt|\nabla f(z_t|)\rt| g(z_t|.) }{f(z_t|.)}  } dt \rt)^2\\
	&\quad \leq 2\e^{-1}|x-y|^{-a} \lt( \int_0^1 \sqrt{ \lt|\nabla \ln f(z_t|)\rt| g(z_t|.) }   dt \rt)^2\\
	&\quad	\leq 2\e^{-2}|x-y|^{-a}\lt( \int_0^1 \sqrt{  g(z_t|.) }   dt \rt)^2\leq 2\e^{-2}|x-y|^{-a} \int_0^1 g(z_t|.)dt, 
	\end{align*}
	where we used point $(i)$ of Lemma \ref{lem:reg} to pass to the last line. But
	\begin{align*}
	\int_0^1 g(z_t|.)dt=\int_0^1 g\lt(y+t|x-y|\frac{(x-y)}{|x-y|}|.\rt)dt
	\end{align*}
	for $|x-y|\leq R$ by we have by point $(ii)$ of Lemma \ref{lem:reg}
	\[
	\int_0^1 g(z_t|.)dt\leq C_{\e,R} g(y|.)
	\]
	So that 
	\begin{align*}
		\int_{|x-y|\leq R}\lt| f(x|)-f(y|) \rt|\int_0^1 \frac{  g(z_t|.)\lt|\nabla \ln\frac{g(|.)}{ f(|.)}(z_t)\rt| }{|x-y|^{1+a}f(z_t|.)}dt\leq 2\e^{-2}C_{\e,R}\int_{|x-y|\leq R}g(y)||x-y|^{-a}dxdy\leq C_{\e,a,R}
	\end{align*}	
	
	Finally 
	\begin{align*}
		\mathcal{K}^N_1\leq  C_{\e,a,R}\lt((1-\theta)\int_{\R^{2(N-1)}}\mb_{\B^{N-1}}G^{N-1}dX^{N-1}+\theta \int_{\R^{2(N-1)}}\mb_{\B^{N-1,c}}F'^{N-1}dX^{N-1}\rt)
	\end{align*}
	
	The other terms are treated similarly and we conclude this step with
	\begin{align*}
		\mathcal{K}^N& \leq \eta + C_{\e,a,R}\lt((1-\theta)\int_{\R^{2(N-1)}}\mb_{\B^{N-1}}G^{N-1}dX^{N-1}+\theta \int_{\R^{2(N-1)}}\mb_{\B^{N-1,c}}F'^{N-1}dX^{N-1}\rt)\\
		& + C_{\e,a,R}\theta \int_{\R^{2(N-1)}} F''^{N-1}dX^{N-1}
	\end{align*}
		
	$\diamond$ \textit{Step five}\newline
	The end of the proof is then exactly taken from \cite[Lemma 5.10]{HM}. Nevertheless we reproduce it here for the sake of completeness. First we treat the third trerm in the above r.h.s. by observing that
	$F^{''}=\mb_{\mathcal{B}_r\setminus \mathcal{B}_s}F$. Therefore
	\[
	\begin{aligned}
	\int_{\R^{2(N-1)}} F''^{N-1}(X^{N-1}) dX^{N-1}=\int_{\PP(\R^2)} \mb_{\mathcal{B}_r\setminus \mathcal{B}_s}(\rho_\e)F(d\rho).
	\end{aligned}
	\]
	Due to Lebesgue's dominated convergence Theorem, the r.h.s. in the above identity goes to $0$. Therefore one can chose some $s<r$ such that 
	\[
	C_\e c_a \theta\int_{\R^{2(N-1)}} F''^{N-1}(X^{N-1}) dX^{N-1}<\eta,
	\]
	uniformly in $N$. Then for $X^{N-1}\notin \tilde{\B}_u^{N-1}$ and $\rho\in \mathcal{B}_s$ we find that
	\[
	\begin{aligned}
	W_1\lt(\frac{1}{N-1}\sum_{i=2}^N\delta_{x_i},\rho*\psi_\e\rt)&\geq 	W_1\lt(\frac{1}{N-1}\sum_{i=2}^N\delta_{x_i},f_1\rt)-	W_1\lt(f_1,\rho\rt)-	W_1\lt(\rho,\rho*\psi_\e\rt)\\
	&\quad \geq u-s-c\e\geq \frac{\delta}{2},
	\end{aligned}
	\]
	for any $\e>0$ small enough. Therefore using a Chebychev-like argument it holds
	\[
	\begin{aligned}
	\int_{\tilde{\B}_u^{N-1,c}}F'^{N-1}(X^{N-1})dX^{N-1}&=\int_{\PP(\R^2)}\lt(\int_{\R^{2(N-1)}}\mb_{\tilde{\B}_u^{N-1,c}} \rho_\e^{\otimes(N-1)}\rt) F'(d\rho)\\
	&\quad \leq \frac{2}{\delta}\int_{\PP(\R^2)}\lt(\int_{\R^{2(N-1)}} W_1\lt(\frac{1}{N-1}\sum_{i=2}^N\delta_{x_i},\rho_\e\rt)\rho_\e^{\otimes(N-1)}(dX^{N-1})\rt) F'(d\rho)
	\end{aligned}
	\]
	We claim that there is a constant $C$ depending only on $\kappa$ (see \cite[Theroem 1]{FG} in case $d=2,p=1,q=\kappa<2$) such that it holds
	\bq
	\label{eq:concHM}
	\int_{\R^{2(N-1)}} W_1\lt(\frac{1}{N-1}\sum_{i=2}^N\delta_{x_i},\rho_\e\rt)\rho_\e^{\otimes(N-1)}(dX^{N-1})\leq C \lt(M_{\kappa}(\rho_\e)\rt)^{\frac{1}{\kappa}} (N-1)^{-\lt(1-\frac{1}{\kappa}\rt)}.
	\eq
	Note that \cite[Remark 2.12]{HM} provides the same result with the exponent $1-\frac{1}{\kappa}$ replaced with $\gamma \in \lt(0,\frac{1}{3+\frac{2}{\kappa}}\rt)$, but the rate of convergence does not play any role in the proof. Summing up \eqref{eq:concHM} w.r.t. $F'$, yields
	\[
	\begin{aligned}
	\int_{\tilde{\B}_u^{N-1,c}}F'^{N-1}(X^{N-1})dX^{N-1}&\leq \frac{C}{\delta(N-1)^{\lt(1-\frac{1}{\kappa}\rt)}}\int_{\PP(\R^2)} \lt(M_{\kappa}(\rho_\e)\rt)^{\frac{1}{\kappa}} F'(d\rho)\\
	&\quad \leq \frac{C}{\delta(N-1)^{\lt(1-\frac{1}{\kappa}\rt)}}\lt(\int_{\PP(\R^2)} M_{\kappa}(\rho)\pi(d\rho)+M_{\kappa}(\psi_\e)\rt)^{\frac{1}{\kappa}},
	\end{aligned}
	\]
	since
	\[
	\begin{aligned}
	M_{\kappa}(\rho_\e)&=\int_{\R^2}\int_{\R^2} \lal x \ral^{\kappa}\rho(x-y)\psi_\e(y)dxdy = \int_{\R^2}\int_{\R^2} \lal x+y \ral^{\kappa}\rho(x)\psi_\e(y)dxdy\\
	&\quad \leq 2^\kappa \lt(\int_{\R^2}\lal x\ral^\kappa\rho(x)dx+\int_{\R^2}\lal y\ral^\kappa\psi_\e(y)dy\rt)
	\end{aligned}
	\]
	
	Treating in the exact same fashion the integral w.r.t. $G^{N-1}$ concludes this step with
	\[
	\forall \eta>0,\, \exists N_\eta, \, \mbox{s.t.} \, \forall N \geq N_{\eta}, \, \mathcal{K}^N\leq \eta.
	\]
	$\diamond$ Final step\newline
	Gathering all the estimates obtained in the previous steps yields for any $\e>0$
	\[
	\lim_{N\rightarrow +\infty }\lt|\II^N_a(\pi^{N,\e})-\theta\II^N_a(F^{N,\e})-(1-\theta)\II^N_a(G^{N,\e})\rt|=0.
	\]
	Hence we deduce
	\[
	\begin{aligned}
	\tilde{\II}_a(\pi^\e)&=\sup_{N\geq 1}\II^N_a(\pi^{N,\e})=\lim_{N\rightarrow+\infty}\II^N_a(\pi^{N,\e})\\
	&\quad = \theta \lim_{N\rightarrow+\infty}\II^N_a(F^{N,\e})+(1-\theta)\lim_{N\rightarrow+\infty}\II^N_a(G^{N,\e})\\
	&\quad =\theta \sup_{N\geq 1}\II^N_a(F^{N,\e})+(1-\theta)\sup_{N\geq 1}\II^N_a(G^{N,\e})\\
	&\quad = \theta \tilde{\II}_a(F^\e)+(1-\theta)\tilde{\II}_a(G^\e).
	\end{aligned}
	\]
	
	But using the convexity of the functional $\Phi$ and Jensen's inequality yields 
	\[
	\begin{aligned}
	\tilde{\II}_a(\pi^\e)=\sup_{N\geq 1}\II_a^N(\pi^{N,\e})\leq \sup_{N\geq 1}\II_a^N(\pi^{N})=\tilde{\II}_a(\pi). 	
	\end{aligned}
	\] 
	Morever it is clear from the fact that the functionals $(\II^N_a)_{N\geq 1}$ are l.s.c.  w.r.t. the weak convergence in $\PP(\R^{2N})$, that $\tilde{\II}_a$ is l.s.c.  w.r.t. the weak convergence in $\PP(\PP(\R^2))$. But since $\pi^\e{\overset{*}{\rightharpoonup}}\pi$ in $\PP(\PP(\R^2))$ we get that
	\[
	\lim_{\e\rightarrow 0} \tilde{\II}_a(\pi^\e)=\tilde{\II}_a(\pi).
	\]
	Therefore 
	\[
	\tilde{\II}_a(\pi)=\theta\tilde{\II}_a(F)+(1-\theta)\tilde{\II}_a(G),
	\]
	which concludes the proof.

\section*{Acknowledgements}
The author was supported by the Fondation des Sciences Math\'ematiques de Paris and Paris Sciences \& Lettres Universit\'e, and warmly thanks Maxime Hauray and St\'ephane Mischler for many advices, comments and discussions which have made this work possible. Also many thanks to Laurent Lafl\`eche for his collaboration on a forthcoming joint work, and which has enabled to simplify among others the proof of Lemma \ref{lem:GNS}. Also, I have corrected a gap in the proof of Lemma \ref{lem:aff}, by using the convex combination inside the argument of the ratio $g/f$ in the proof of \eqref{eq:id}, (instead of the convex combination of the ratio $g/f$ in the previous version).
%
%
%
%

%
%
%
%

\end{document}